\numberwithin{equation}{section}
\theoremstyle{plain}
\newtheorem{Proposition}[equation]{Proposition}
\newtheorem{Corollary}[equation]{Corollary}
\newtheorem{Theorem}[equation]{Theorem}
\newtheorem{Lemma}[equation]{Lemma}
\newtheorem{lemma}[equation]{Lemma}
\theoremstyle{definition}
\newtheorem{Remark}[equation]{Remark}
\newtheorem{Question}[equation]{Question}
\def\C{\mathbb{C}}
\def\R{\mathbb{R}}
\def\ov{\overline}
\begin{document}

\bibliographystyle{amsplain}

\title{On a theorem of Livsic}

\author[A.~Aleman]{Alexandru Aleman}
	\address{Department of Mathematics, Lund University, P.O. Box 118, S-221 00 Lund, Sweden}
	\email{aleman@maths.lth.se}
	
\author{R. T. W. Martin}
\address{Department of Mathematics and Applied Mathematics, University of Cape Town, Cape Town, South Africa}
\email{rtwmartin@gmail.com}

\author[W.T.~Ross]{William T. Ross}
	\address{Department of Mathematics and Computer Science, University of Richmond, Richmond, VA 23173, USA}
	\email{wross@richmond.edu}

\begin{abstract}

The theory of symmetric, non-selfadjoint operators has several deep applications to the complex function theory of certain reproducing kernel
Hilbert spaces of analytic functions, as well as to the study of ordinary differential operators such as Schrodinger operators in mathematical physics. Examples of simple symmetric operators include multiplication operators on various spaces of analytic functions such as model subspaces of Hardy spaces,  deBranges-Rovnyak spaces and Herglotz spaces, ordinary differential operators (including Schrodinger operators from quantum mechanics), Toeplitz operators, and infinite Jacobi matrices.

In this paper we develop a general representation theory of simple symmetric operators with equal deficiency indices, and obtain a collection of results which refine and extend classical works of Krein and Livsic.  In particular we provide an alternative proof of a theorem of Livsic which characterizes when two simple symmetric operators with equal deficiency indices are unitarily equivalent, and we provide a new, more easily computable formula for the Livsic characteristic function of a simple symmetric operator with equal deficiency indices.

\end{abstract}

\maketitle

\section{Introduction}

For $n \in \mathbb{N} \cup \{\infty\}$ let $\mathcal{S}_{n}(\mathcal{H})$ denote the set of simple, closed, symmetric, densely defined linear transformations $T: \mathscr{D}(T) \subset \mathcal{H} \to \mathcal{H}$ with deficiency indices $(n, n)$. By this we mean that $T$ is a linear transformation defined on a dense domain $\mathscr{D}(T)$  in a complex separable Hilbert space $\mathcal{H}$ which satisfies the properties:
\begin{equation} \label{symmetric-defn}
\langle T x, y \rangle = \langle x, T y \rangle, \quad \forall x, y \in \mathscr{D}(T), \quad \mbox{($T$ is \emph{symmetric})};
\end{equation}
\begin{equation} \label{simple-defn}
\bigcap_{\Im \lambda \not = 0} \mbox{Rng}(T - \lambda I) = \{0\}, \quad \mbox{($T$ is \emph{simple})};
\end{equation}
\begin{equation}
\mbox{$\{(x, T x): x \in \mathscr{D}(T)\}$ is a closed subset of $\mathcal{H} \oplus \mathcal{H}$}, \quad \mbox{($T$ is \emph{closed})};
\end{equation}
\begin{equation}
\dim \mbox{Rng}(T - i I)^{\perp} = \dim \mbox{Rng}(T + i I)^{\perp}  = n \quad \mbox{(\emph{$T$ has equal deficiency indices})}.
\end{equation}

Condition \eqref{simple-defn} ($T$ is simple) can be restated equivalently as: $T$ is simple if there does not exist a (non-trivial) subspace invariant under $T$ such that the restriction of $T$ to this subspace is self-adjoint \cite{A-G}. We also point out that
$$n = \dim \mbox{Rng}(T - w I)^{\perp} = \dim \mbox{Rng}(T - z I)^{\perp}, \quad \Im w > 0, \Im z < 0,$$
that is to say, the deficiency indices $ \dim \mbox{Rng}(T - w I)$ are constant for $w$ in the upper $\C_{+} := \{\Im z > 0\}$ and lower  $\C_{-} := \{\Im z < 0\}$ half planes \cite[Section 78]{A-G}. As we will discuss later in this paper, examples of operators which satisfy the above properties include certain classes of Sturm-Liouville operators, Schr\"{o}dinger operators, unbounded Toeplitz operators on the Hardy space, and multiplication operators on various spaces of analytic functions on $\C_{+}$ and $\C \setminus \R$.

The purpose of this paper is to rediscover and improve upon a theorem of Livsic \cite{Livsic-1, Livsic-2} (see Theorem \ref{intro-Livsic} below) which characterizes when $T_1 \in \mathcal{S}_n(\mathcal{H}_{1})$ and $T_2 \in \mathcal{S}_{n}(\mathcal{H}_2)$ are unitarily equivalent, written $T_{1} \cong T_2$. Let us review Livsic's theorem when $n < \infty$.
For $T \in \mathcal{S}_n(\mathcal{H})$ we know, since $T$ has equal deficiency indices,  that $T$ has (canonical) self-adjoint extensions $T': \mathscr{D}(T') \subset \mathcal{H} \to \mathcal{H}$.  If $\{u_j\}_{j = 1}^{n}$ is an orthonormal basis for $\mbox{Rng}(T + i I)^{\perp} = \mbox{Ker}(T^* -i I)$,  define the matrix-valued function $w_{T}$ on $\C_{+}$ by
\begin{equation} \label{Livsic-char}
 w_T (z) := b(z) B(z) ^{-1} A(z), \quad z \in \C_{+},
 \end{equation} where
 \begin{equation} \label{Blaschke-b}
 b(z) := \frac{z-i}{z+i},
 \end{equation}
is the single Blaschke factor defined on $\C_{+}$ with zero at $z = i$,
$$B(z) :=  \left[ \langle \left( I + (z - i) (T' - z I)^{-1} \right) u_j, u_k\rangle \right]_{1 \leqslant j, k \leqslant n}, $$ and
$$  A (z) := \left[  \langle \left( I + (z + i) (T' - z)^{-1} \right) u_j, u_k \rangle \right]_{1 \leqslant j, k \leqslant n}. $$

The function $w_T$ in \eqref{Livsic-char}, called the \emph{Livsic characteristic function} for $T$, is a contractive matrix-valued analytic function on $\C_{+}$.  Moreover, given any contractive, matrix-valued analytic function $w$ on $\C_{+}$ with $w(i) = 0$ there is a closed, simple, symmetric, linear transformation $T$ with $w = w_{T}$ (however this symmetric linear transformation $T$ is not necessarily densely defined). The characteristic function $w_T$ of $T$ is essentially independent of the choice of self-adjoint extension $T'$ and the choice of orthonormal basis $\{ u_j \}_{j = 1}^{n}$, i.e.,  if $T' _k, k = 1, 2, $ are two self-adjoint extensions of $T$, $\{ u _j ^{(k)} \}_{j = 1}^{n}, k = 1, 2,$ are orthonormal bases of $\mbox{Ker}(T^* -i I)$, and $w_k, k = 1, 2,$ are the characteristic functions of $T$ constructed using the $T' _k$ and $\{ u_j ^{(k)} \}_{j = 1}^{n}$, then there exists two constant unitary matrices $Q$ and $R$ such that
\begin{equation} \label{RQ-equiv}
w_1(z) = R w_2(z) Q, \quad z \in \C_{+}.
\end{equation}
For this reason we say that two characteristic functions $w_1, w_2$ are \emph{equivalent} if condition \eqref{RQ-equiv} for some constant unitary matrices $Q$ and $R$.
Livsic's theorem is the following:

\begin{Theorem}[Livsic \cite{Livsic-1, Livsic-2}] \label{intro-Livsic}
The operators $T_1 \in \mathcal{S}_n(\mathcal{H}_1)$ and $T_2 \in \mathcal{S}_n(\mathcal{H}_2)$ are unitarily equivalent if and only if $w_{T_1}$ and $w_{T_2}$ are equivalent.
\end{Theorem}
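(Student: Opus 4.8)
The plan is to prove the two implications separately. The ``only if'' direction is a transport of structure: if $U\colon \mathcal{H}_1 \to \mathcal{H}_2$ is unitary with $U T_1 U^* = T_2$, then $U$ conjugates $T_1^*$ to $T_2^*$, carries $\mathrm{Ker}(T_1^* - iI)$ isometrically onto $\mathrm{Ker}(T_2^* - iI)$, and sends each canonical self-adjoint extension $T_1'$ of $T_1$ to a self-adjoint extension $U T_1' U^*$ of $T_2$, with $U(T_1' - zI)^{-1}U^* = (U T_1' U^* - zI)^{-1}$. Substituting these relations into \eqref{Livsic-char}, the matrices $A$ and $B$ computed for $T_1$ from the data $(T_1', \{u_j\})$ agree with those computed for $T_2$ from the data $(U T_1' U^*, \{U u_j\})$, so $w_{T_1}$ in the first normalization equals $w_{T_2}$ in the second; since the characteristic function is determined only up to the equivalence \eqref{RQ-equiv}, this says precisely that $w_{T_1}$ and $w_{T_2}$ are equivalent.

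For the ``if'' direction the strategy is to build a canonical \emph{functional model} and recover $T$ from $w_T$ up to unitary equivalence. Fix $T \in \mathcal{S}_n(\mathcal{H})$, a canonical self-adjoint extension $T'$, and an orthonormal basis $\{u_j\}_{j=1}^n$ of $\mathrm{Ker}(T^* - iI)$, and for $w \in \C \setminus \R$ put $v_j(w) := \big(I + (w-i)(T'-wI)^{-1}\big)u_j = (T'-iI)(T'-wI)^{-1}u_j$. A short computation from $T^* u_j = i u_j$ and $T^*|_{\mathscr{D}(T')} = T'$ gives $T^* v_j(w) = w\, v_j(w)$, so $\{v_j(w)\}_{j=1}^n$ is a basis of the defect space $\mathrm{Ker}(T^* - wI)$. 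Let $\Gamma$ be the generalized Fourier transform sending $x \in \mathcal{H}$ to the analytic $\C^n$-valued function on $\C_+$ given by $(\Gamma x)(z) := \big(\inner{x, v_j(\ov z)}\big)_{j=1}^n$. Since the closed linear span of $\{v_j(\ov z): z \in \C_+,\ 1 \le j \le n\}$ is $\bigvee_{\Im w < 0}\mathrm{Ker}(T^* - wI)$, which equals $\mathcal{H}$ --- one of the standard reformulations of simplicity \cite{A-G} --- the map $\Gamma$ is injective. Transport the norm of $\mathcal{H}$ through $\Gamma$ so that $\Gamma$ becomes unitary onto its range; the range is then a reproducing kernel Hilbert space of $\C^n$-valued analytic functions on $\C_+$ with kernel $K(z,\zeta) = \big(\inner{v_l(\ov\zeta), v_j(\ov z)}\big)_{j,l=1}^n$. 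Using the resolvent identity for $T'$ together with its self-adjointness, one evaluates $K$ in closed form in terms of the matrices $A$, $B$ and the Blaschke factor $b$ of \eqref{Blaschke-b}, the upshot being that $\mathrm{Rng}\,\Gamma$ is the de Branges--Rovnyak, equivalently Herglotz, space $\mathcal{H}(w_T)$ --- the reproducing kernel Hilbert space of $\C^n$-valued analytic functions on $\C_+$ with kernel $\dfrac{I - w_T(z)\,w_T(\zeta)^*}{-2\pi i\,(z-\ov\zeta)}$. Finally, for $x \in \mathscr{D}(T)$,
\[
(\Gamma T x)(z) = \big(\inner{Tx, v_j(\ov z)}\big)_j = \big(\inner{x, T^* v_j(\ov z)}\big)_j = \big(\inner{x, \ov z\, v_j(\ov z)}\big)_j = z\,(\Gamma x)(z),
\]
so $\Gamma$ intertwines $T$ with the operator $Z_{w_T}$ of multiplication by the independent variable on $\mathcal{H}(w_T)$ (a routine argument matches the domains); thus $T \cong Z_{w_T}$.

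It remains to observe that this model depends only on the equivalence class of the characteristic function: if $w_1(z) = R\, w_2(z)\, Q$ with $R$, $Q$ constant unitary matrices, then $w_1(z)w_1(\zeta)^* = R\, w_2(z)w_2(\zeta)^* R^*$, so the kernels obey $K_{w_1}(z,\zeta) = R\, K_{w_2}(z,\zeta)\, R^*$ and $f \mapsto R f$ is a unitary from $\mathcal{H}(w_2)$ onto $\mathcal{H}(w_1)$ that commutes with multiplication by the independent variable; hence $Z_{w_1} \cong Z_{w_2}$. Combining this with the previous paragraph, if $w_{T_1}$ and $w_{T_2}$ are equivalent then $T_1 \cong Z_{w_{T_1}} \cong Z_{w_{T_2}} \cong T_2$; together with the ``only if'' direction this proves the theorem.

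The main obstacle is the functional-model step, and within it the closed-form evaluation of the kernel $K(z,\zeta) = \big(\inner{v_l(\ov\zeta), v_j(\ov z)}\big)$: one must show that the a priori abstract reproducing kernel space $\mathrm{Rng}\,\Gamma$ is precisely the concrete de Branges--Rovnyak space determined by $w_T$, and it is here that the particular form of \eqref{Livsic-char} --- the Blaschke factor $b$ and the two resolvent matrices $A$, $B$ --- is forced rather than incidental. Two further points need care: the defect vectors $v_j(\ov z)$ do not lie in $\mathscr{D}(T)$, so the intertwining identity and the identification of the domain of $Z_{w_T}$ have to be run through $T^*$ and the resolvents of $T'$, keeping track of the normalization at $z = i$ responsible for $w_T(i) = 0$; and when $n = \infty$ the formulas must be read with operator-valued $A(z)$, $B(z)$, with $B(z)$ boundedly invertible on the relevant half-plane. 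A shorter but less self-contained alternative would be to pass to the Cayley transforms of $T_1$ and $T_2$ and invoke the Sz.-Nagy--Foias model for completely non-unitary contractions.
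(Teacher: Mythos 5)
Your ``only if'' direction is fine and is the same transport-of-structure argument the paper sketches. The ``if'' direction, however, rests on a false claim: you assert that $\bigvee_{\Im w<0}\mbox{Ker}(T^{*}-wI)=\mathcal{H}$ is ``one of the standard reformulations of simplicity.'' Simplicity, i.e.\ condition \eqref{simple-defn}, is equivalent to density of the span of the defect spaces taken over \emph{both} half-planes; over a single half-plane the span can be a proper subspace. Concretely, take $T=M_{\mu}$ with $\mu$ Lebesgue measure on $\R$: then $\mbox{Ker}(T^{*}-wI)=\C\,\tfrac{1}{x-w}$, and $f(x)=(x-i)^{-2}$, which lies in $H^{2}(\C_{-})$, satisfies $\langle f,\tfrac{1}{x-w}\rangle=\int_{\R}\tfrac{f(x)}{x-\ov{w}}\,dx=0$ for every $w\in\C_{-}$ (close the contour in the lower half-plane), yet $f\neq 0$ and $T$ is simple with indices $(1,1)$. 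So your transform $\Gamma x=(\langle x,v_{j}(\ov{z})\rangle)_{j}$, $z\in\C_{+}$, is not injective in general, the ``transported norm'' is undefined, and the $\C_{+}$-functional model cannot reproduce $T$: in this example $w_{T}\equiv 0$, $\mathscr{K}(w_{T})=H^{2}_{\C}(\C_{+})$, and multiplication by $z$ there has unequal deficiency indices, so it is certainly not unitarily equivalent to $M_{\mu}$. This is exactly the issue the paper isolates: multiplication by the independent variable on the deBranges--Rovnyak space $\mathscr{K}(V)$ models $T$ only under the extra hypothesis that $V$ is an extreme point of $\mathscr{B}_{\C^{n}}$ (Theorem \ref{Martin-Z} and Corollary \ref{Mz-DR}, via $\mathscr{H}(V)_{+}=\mathscr{H}(V)$), which is why the general model space $\mathcal{H}(\Gamma)$ consists of functions on all of $\C\setminus\R$.

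Because of this, your plan does not recover the paper's proof and cannot be repaired by staying on $\C_{+}$. The paper instead factors the two-half-plane kernel $K_{\lambda}(z)=\Gamma(z)^{*}\Gamma(\lambda)$ as in Theorem \ref{T-main-vector-kernel}, proves $T_{1}\cong T_{2}$ iff $V_{1}=RV_{2}Q$ using Lemma \ref{multiplier-matrx} (any intertwining unitary is a multiplication operator) together with the identity $V(z)V(\ov{z})^{*}=I$ that couples the two half-planes, and then identifies $V$ with $w_{T}$ up to constant unitaries in Corollary \ref{VisW}; alternatively one can argue as in Livsic's original proof sketched in the introduction, via the Herglotz representation of $(I+w_{T})(I-w_{T})^{-1}$, uniqueness of the representing measure, and the generating property of the defect basis for $b(T')$, or via the Herglotz-space model of Theorem \ref{Mz-Herglotz}, all of which use data from both half-planes. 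Two smaller inaccuracies: even on $\C_{+}$ the kernel of your $\mbox{Rng}\,\Gamma$ is $\Phi(z)\,\Delta^{V}_{\lambda}(z)\,\Phi(\lambda)^{*}$ rather than the bare deBranges--Rovnyak kernel, so the identification with $\mathscr{K}(w_{T})$ requires the isometric multiplier of Theorem \ref{multiplier}; and the Cayley transform of a symmetric operator is an isometry with defect, not a completely non-unitary contraction defined on all of $\mathcal{H}$, so ``invoke Sz.-Nagy--Foias'' is not an off-the-shelf substitute.
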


Lisvic's original proof of this result uses the spectral theorem for self-adjoint operators \cite{A-G}. Here is a brief sketch: One direction of the proof is straightforward.  Indeed if $T_1$ and $T_2$ are unitarily equivalent, one can find self-adjoint extensions $T_i ', i = 1, 2,$ of the $T_i$ which are unitarily equivalent, and, if one uses these extensions to construct the characteristic functions $w_{T_i}$ as in \eqref{Livsic-char}, it follows that these characteristic functions will be equivalent in the sense of \eqref{RQ-equiv}. To prove the
converse, if $w_{T_1}$ and $w_{T_2}$ are equivalent, one can, without loss of generality, assume they are equal (they can be made equal by choosing the orthonormal bases $\{ u _j ^{(1)} \}_{j = 1}^{n}, \{ u _j ^{(2)} \}_{j = 1}^{n}$ for $\mbox{Ker}(T_1 ^* -i I )$ and $\mbox{Ker}(T_{2}^{*} - i I)$ respectively, and
the self-adjoint extensions $T_{1}^{'}, T_{2}^{'}$ used to construct the characteristic functions $w_{T_1 }, w_{T_2}$ appropriately). A calculation using \eqref{Livsic-char} shows that $w_{T_1} = w _{T_2}$ implies that
$\Omega _1 =  \Omega _2$ where
\begin{equation}  \label{Her-rep-intro}
\Omega _k (z)  := \frac{ I + w_{T_k} (z) }{ I - w_{T_k} (z)} = \frac{1}{i\pi} \int _{-\infty} ^\infty \frac{1}{t-z} \Lambda_k (dt), \end{equation}
and
 $\Lambda_p$ are $n\times n$ unital positive matrix-valued measures such that
 $$\Lambda_p (\Delta ) = 4\pi ^2 (1+t^2) \left[ \langle \chi _\Delta (T' _p ) u_j ^{(p)}  , u _k ^{(p)} \rangle \right]_{1 \leqslant j, k \leqslant n}, \quad p = 1, 2,$$ for any Borel subset $\Delta \subset \mathbb{R}$. Here $\chi _\Delta $ denotes the characteristic function of the Borel set $\Delta$, and $\chi _\Delta (T ' _p )$ defines a unital projection-valued measure using the functional calculus for self-adjoint operators. The uniqueness of the Herglotz representation in \eqref{Her-rep-intro}, along with the fact that $\Omega _1 = \Omega _2$, implies that $\Lambda_1 = \Lambda_2$. Since the $\{ u _j ^{(p)} \}_{j = 1}^{n}, p = 1, 2, $ are generating bases
for the unitary operators $b(T'_{p})$
 (this follows from the simplicity of the $T_p$), it follows that the $T'_{p}, p = 1, 2,$ and hence the $T_p$ are unitarily equivalent.

In this paper we give an alternate proof of Livsic's theorem (Theorem \ref{intro-Livsic}) using reproducing kernel Hilbert spaces of analytic functions. In particular, in Theorem \ref{T-main-vector-kernel} below, we factor these reproducing kernels in a particular way which yields the Livsic characteristic function. By doing this we accomplish several things. First, our factorization of reproducing kernels technique gives us further insight into what makes Livsic's theorem work and lets us see the characteristic function in a broader context. Second, our alternate proof is more abstract and thus gives us more latitude in computing the characteristic function since computing $w_T$, as it is defined by \eqref{Livsic-char}, involves a self adjoint extension of $T$, which can be difficult to compute, as well as a resolvent, which is also difficult to compute. Third, by associating, in certain circumstances, $T \in \mathcal{S}_{n}(\mathcal{H})$ with multiplication by the independent variable on a deBranges-Rovnyak space, we can gain further information about  some function theory properties of the associated Livsic function.
Fourth, our proof handles the $n = \infty$ case for which $w_T$ becomes an contractive operator-valued analytic function on $\C_{+}$.

The main results of this paper will be to (i) associate any $T \in \mathcal{S}_{n}(\mathcal{H})$ with a vector-valued reproducing kernel Hilbert space of analytic functions on $\C \setminus \R$ (Propositions \ref{H-Gamma} and \ref{H-Gamma2}); (ii) associate the kernel function for this space with the Livsic characteristic function (Theorem \ref{T-main-vector-kernel}); (iii) compute the Livsic function for the operators of differentiation and double differentiation, Sturm-Liouville operators, unbounded symmetric Toeplitz operators, and symmetric operators which act as multiplication by the independent variable in Lebesgue spaces, Herglotz spaces, and deBranges-Rovnyak spaces; (iv) show that $T$ is unitarily equivalent to multiplication by the independent variable on a Herglotz space (Theorem \ref{Mz-Herglotz}); (v) show that when $n < \infty$ and the Livsic characteristic function for $T$ is an extreme point of the unit ball of the $n\times n$ matrix-valued bounded analytic functions on $\C_{+}$, then $T$ is unitarily equivalent to multiplication by the independent variable on an associated vector-valued deBranges-Rovnyak space (Corollary \ref{Mz-DR}); (vi) use this equivalence to show, when the Livsic function $V$ for $T$ is an extreme point, that the angular derivative of $(V \circ b^{-1}) \vec{k}$ at $z = 1$ does not exist for any $\vec{k} \in \C^{n}$ (Corollary \ref{Mz-DR}).

Finally we mention that some of the results we prove here, like Livsic's theorem and the fact that every $T \in \mathcal{S}_{n}(\mathcal{H})$ can be realized as multiplication by the independent variable on some Lebesgue space,  are known (and we will certainly point out the original sources) but the main wrinkle here is that they can be obtained via reproducing kernel Hilbert spaces and factorization of kernel functions for these spaces. Moreover, via deBranges-Rovnkay spaces, we gain some additional information about the Livsic function. As demonstrated above with the the sketch of the proof of Livsic's theorem, the original proofs used the spectral theorem, which certainly adds efficiency and utility (and even elegance) but not computability.

We would be remiss if we did not point out a paper of Poltoratski and Makarov \cite{MR2215727} which uses a different model than ours to associate operators with inner functions and classical model spaces of the upper-half plane. In particular they use these results to solve specific problems associated with Schr\"{o}dinger operators.

\section{A model operator}

The main idea, going back to Krein \cite{MR1466698, MR0011170, MR0011533, MR0048704}, and used many times before \cite{MR1784638, Langer, Martin1}, in examining symmetric operators is the idea of a vector-valued reproducing kernel Hilbert space of analytic functions associated with a symmetric operator. For $T \in \mathcal{S}_n(\mathcal{H})$, $n \in \mathbb{N} \cup \{\infty\}$, let $\mathcal{K}$ be any complex separable Hilbert space whose dimension is
$$n = \mbox{Rng}(T+ i I)^{\perp} = \mbox{Rng}(T - i  I)^{\perp}.$$
 When $n \in \mathbb{N}$, one usually takes $\mathcal{K}$ to be $\C^{n}$, with the standard inner product
 $$\langle \vec{z}, \vec{w}\rangle_{\C^n} := \sum_{j = 1}^{n} z_{j} \overline{w_j}.$$

 \subsection{The model}
 If $\mathcal{B}(\mathcal{K}, \mathcal{H})$ is the space of bounded linear operators from $\mathcal{K}$ to $\mathcal{H}$, we say that $\Gamma: \C \setminus \R \to \mathcal{B}(\mathcal{K}, \mathcal{H})$ is a
  \emph{model} for $T$ if $\Gamma$ satisfies the following conditions:
\begin{equation} \label{I}
\Gamma: \C \setminus \R \to \mathcal{B}(\mathcal{K}, \mathcal{H}) \quad \mbox{is co-analytic};
\end{equation}
\begin{equation}
\Gamma(\lambda): \mathcal{K} \to \mbox{Rng}(T  - \lambda I)^{\perp} \quad \mbox{is invertible for each $\lambda \in \C \setminus \R$};
\end{equation}
\begin{equation}
\Gamma(z)^{*} \Gamma(\lambda): \mathcal{K} \to \mathcal{K} \quad \mbox{is invertible for all $\lambda, z \in \C_{+}$ or $\lambda, z \in \C_{-}$}; \label{equation:condition}
\end{equation}
\begin{equation}
\bigvee_{\Im \lambda \not = 0} \mbox{Rng} \Gamma(\lambda) = \mathcal{H},
\end{equation}
where $\bigvee$ denotes the closed linear span.

\begin{Proposition}
Every $T \in \mathcal{S}_{n}(\mathcal{H})$ has a model. \label{prop:model}
\end{Proposition}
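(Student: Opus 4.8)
The plan is to transport a fixed copy of $\mathcal K$ onto the deficiency subspaces $\mathcal D_\lambda:=\mathrm{Rng}(T-\lambda I)^\perp=\mathrm{Ker}(T^*-\bar\lambda I)$ by means of the resolvent of a self‑adjoint extension. Since $T$ has equal deficiency indices it possesses a self‑adjoint extension $T'$, $T\subseteq T'\subseteq T^*$ \cite{A-G}. For non‑real $\alpha\neq\beta$ put $U_{\alpha,\beta}:=I+(\beta-\alpha)(T'-\beta I)^{-1}$; the resolvent identity gives $U_{\beta,\alpha}U_{\alpha,\beta}=I$ on $\mathcal H$, so $U_{\alpha,\beta}$ is a bounded invertible operator on $\mathcal H$, and a direct computation using $T'\subseteq T^*$ shows $T^*(U_{\alpha,\beta}f)=\beta\,U_{\alpha,\beta}f$ for $f\in\mathrm{Ker}(T^*-\alpha I)$; hence $U_{\alpha,\beta}$ carries $\mathrm{Ker}(T^*-\alpha I)$ bijectively onto $\mathrm{Ker}(T^*-\beta I)$. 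Now fix $\mu_\pm\in\C_\pm$ and unitaries $V_\pm\colon\mathcal K\to\mathrm{Ker}(T^*-\overline{\mu_\pm}\,I)$ (possible since $\dim\mathcal K=n=\dim\mathrm{Ker}(T^*-\overline{\mu_\pm}\,I)$), and define $\Gamma(\lambda):=U_{\overline{\mu_+},\bar\lambda}V_+$ for $\lambda\in\C_+$ and $\Gamma(\lambda):=U_{\overline{\mu_-},\bar\lambda}V_-$ for $\lambda\in\C_-$.

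Three of the four defining properties are then immediate. Condition \eqref{I} holds because $\lambda\mapsto(T'-\bar\lambda I)^{-1}$ is co‑analytic on $\C\setminus\R$, hence so is $\Gamma$ on $\C_+$ and on $\C_-$, hence on $\C\setminus\R$. The operator $\Gamma(\lambda)\colon\mathcal K\to\mathrm{Rng}(T-\lambda I)^\perp$ is invertible, being the composition of the unitary $V_\pm$ with the bijection $U_{\overline{\mu_\pm},\bar\lambda}$ onto $\mathrm{Ker}(T^*-\bar\lambda I)=\mathrm{Rng}(T-\lambda I)^\perp$. Finally $\bigvee_{\Im\lambda\neq0}\mathrm{Rng}\,\Gamma(\lambda)=\bigvee_{\Im\lambda\neq0}\mathrm{Rng}(T-\lambda I)^\perp=\bigl(\bigcap_{\Im\lambda\neq0}\mathrm{Rng}(T-\lambda I)\bigr)^\perp=\{0\}^\perp=\mathcal H$, by simplicity \eqref{simple-defn}.

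The heart of the matter is \eqref{equation:condition}. Writing $\mathcal D_\lambda:=\mathrm{Rng}(T-\lambda I)^\perp$, and using that $\Gamma(\lambda)$ is a bounded bijection $\mathcal K\to\mathcal D_\lambda$ while $\Gamma(z)^*$ (the adjoint of a bijection onto a closed subspace) restricts to a bounded bijection $\mathcal D_z\to\mathcal K$, one has $\Gamma(z)^*\Gamma(\lambda)=\bigl(\Gamma(z)^*|_{\mathcal D_z}\bigr)\,P_{\mathcal D_z}\,\Gamma(\lambda)$, so \eqref{equation:condition} is equivalent to: the compression $P_{\mathcal D_z}|_{\mathcal D_\lambda}\colon\mathcal D_\lambda\to\mathcal D_z$ is invertible whenever $z$ and $\lambda$ lie in the same half‑plane. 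I would prove this in two steps. \emph{Injectivity.} If $0\neq f\in\mathcal D_\lambda\cap\mathcal D_z^\perp$ then $f=(T-zI)g$ for some $0\neq g\in\mathscr D(T)$; from $T^*f=\bar\lambda f$ one gets $\langle Tg,f\rangle=\lambda\langle g,f\rangle$, whence $\|f\|^2=(\lambda-z)\langle g,f\rangle$, and since $\langle g,f\rangle=\langle Tg,g\rangle-\bar z\|g\|^2$ with $\langle Tg,g\rangle$ real, this identity together with $\|f\|^2=\|Tg\|^2-2(\Re z)\langle Tg,g\rangle+|z|^2\|g\|^2$ determines $\langle Tg,g\rangle$, $\|g\|^2$, $\|Tg\|^2$ and $\|f\|^2$ up to a common positive scalar; inserting these into the Cauchy--Schwarz inequality $|\langle Tg,g\rangle|^2\le\|Tg\|^2\|g\|^2$ yields, after simplification, $(\Im z)(\Im\lambda)\,|z-\lambda|^2\le0$, impossible since $\Im z$ and $\Im\lambda$ have the same sign and $z\neq\lambda$. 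Interchanging $z$ and $\lambda$ shows $P_{\mathcal D_z}|_{\mathcal D_\lambda}$ also has dense range. \emph{Surjectivity (boundedness below).} When $n<\infty$ this is automatic, since $\dim\mathcal D_\lambda=\dim\mathcal D_z=n$ makes the injective map $P_{\mathcal D_z}|_{\mathcal D_\lambda}$ bijective. For general $n$ I argue by contradiction: if unit vectors $f_k\in\mathcal D_\lambda$ satisfy $P_{\mathcal D_z}f_k\to0$, pass to a weakly convergent subsequence; the injectivity just proved forces $f_k\rightharpoonup0$; writing $f_k=P_{\mathcal D_z}f_k+(T-zI)g_k$ gives $g_k\rightharpoonup0$ with $\|g_k\|$ and $\|Tg_k\|$ bounded, and passing to the limits of $\langle Tg_k,g_k\rangle$, $\|g_k\|^2$, $\|Tg_k\|^2$ reproduces the same over‑determined system as above, again contradicting Cauchy--Schwarz.

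I expect the single real obstacle to be \eqref{equation:condition}, i.e.\ the invertibility of the deficiency‑space compressions $P_{\mathcal D_z}|_{\mathcal D_\lambda}$ for $z,\lambda$ in a common half‑plane. Injectivity is a short but genuine computation with the eigenrelation and the Cauchy--Schwarz inequality; the delicate point, when the deficiency indices are infinite, is the boundedness‑below step, where the dimension count must be replaced by the weak‑compactness reduction to that same Cauchy--Schwarz estimate.
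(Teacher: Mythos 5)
Your construction of $\Gamma$ is the same Krein-type construction the paper uses in \eqref{Krein-trick} -- transporting a fixed copy of $\mathcal{K}$ onto the deficiency spaces $\mathcal{D}_\lambda := \mbox{Rng}(T-\lambda I)^{\perp}$ via $I+(\beta-\alpha)(T'-\beta I)^{-1}$ for a self-adjoint extension $T'$ -- and, like the paper, you correctly isolate \eqref{equation:condition} as the only genuine issue (the analyticity, invertibility and spanning properties are routine in both arguments). Where you diverge is in how that condition is proved. The paper introduces the auxiliary operators $T_w=\overline{T^*|\mathscr{D}_w}$, proves a lemma locating their spectra in a closed half-plane, works with a Riesz basis $\{\gamma_j(\lambda)\}$ of $\mbox{Ker}(T^*-\lambda I)$, and reaches a contradiction with the regularity of $z$ by using the resolvent $(T_z-\overline{\lambda}I)^{-1}$ to show $T-zI$ would fail to be bounded below. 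You instead reduce invertibility of $\Gamma(z)^*\Gamma(\lambda)$ to invertibility of the compression $P_{\mathcal{D}_z}|_{\mathcal{D}_\lambda}$ and settle that by a direct quadratic-form computation: for $f\in\mathcal{D}_\lambda\cap\mbox{Rng}(T-zI)$, $f=(T-zI)g$, the exact identities $\|f\|^2=(\lambda-z)\langle g,f\rangle$ and $\Im\langle g,f\rangle=(\Im z)\|g\|^2$, combined with Cauchy--Schwarz (your form $\langle Tg,g\rangle^2\le\|Tg\|^2\|g\|^2$ is equivalent, after substituting $\|f\|^2=\|(T-zI)g\|^2$, to $|\langle g,f\rangle|^2\le\|g\|^2\|f\|^2$), force $(\Im z)(\Im\lambda)\le 0$; I checked the algebra and your claimed inequality is correct, and the swap of $z$ and $\lambda$ does give dense range of the compression. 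Your boundedness-below step for $n=\infty$ also works, and is in fact simpler than you describe: since $\|P_{\mathcal{D}_z}f_k\|\to 0$ in norm and $\|g_k\|, \|Tg_k\|$ are bounded by regularity of $z$, the two identities hold up to $o(1)$, so it suffices to pass to convergent subsequences of the real scalars $\langle Tg_k,g_k\rangle$, $\|g_k\|^2$, $\|Tg_k\|^2$; the weak-compactness step ($f_k\rightharpoonup 0$, $g_k\rightharpoonup 0$) is harmless but not needed. What the paper's heavier route buys is machinery reused elsewhere (the Riesz-basis estimates from its proof reappear, e.g., in the proof of Corollary \ref{VisW}, and the lemma on $T_w$ has independent interest); what yours buys is a shorter, more elementary and self-contained verification of \eqref{equation:condition} that treats $n\leqslant\infty$ uniformly through one Cauchy--Schwarz identity.
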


The proof of this proposition needs a little set up. Given a closed densely-defined operator $T$ with domain $\mathscr{D} (T) \subset \mathcal{H}$, a point $z \in \mathbb{C}$ is called a \emph{regular point} of $T$ if $T -z I$ is bounded below on $\mathscr{D} (T)$, i.e., $\|(T - z I) x\| \geqslant c_{z} \|x\|$ for all $x \in \mathscr{D}(T)$. Let $\Omega $ denote the set of all regular points of $T$. If $T \in \mathcal{S} _n (\mathcal{H})$, then since $T$ is symmetric  we have $\mathbb{C} \setminus \mathbb{R} \subset \Omega \subset \mathbb{C}$. $T$ is called \emph{regular} if $\Omega = \mathbb{C}$. For any $w \in \Omega$, let
$$\mathscr{D} _w := \mathscr{D} (T) + \mbox{Ker}(T^* -w I ),$$
and define $T_w := \overline{T^* | \mathscr{D} _w}$, the closure of $T^*|\mathscr{D}_{w}$.
\begin{lemma}
    Suppose that $w \in \Omega$ is a regular point of $T \in \mathcal{S} _n (\mathcal{H})$. The spectrum of $T_w$ is contained in $\overline{\mathbb{C} _{+}} $, $\mathbb{R}$ or $\overline{\mathbb{C} _{-}}$ when $w \in \mathbb{C} _+ , \ \mathbb{R} \cap \Omega $ or $\mathbb{C} _-$ respectively.
\end{lemma}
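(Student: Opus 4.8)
The plan is to analyze $T_w = \overline{T^*|_{\mathscr D_w}}$ via the classical extension theory of symmetric operators, reducing everything to a computation with the defect spaces. First I would recall the standard fact that, since $T$ is symmetric and closed with $w \in \C_+$, one has the von Neumann-type direct sum decomposition $\mathscr D(T^*) = \mathscr D(T) \dotplus \mathrm{Ker}(T^* - wI) \dotplus \mathrm{Ker}(T^* - \bar wI)$, and that $\mathscr D_w = \mathscr D(T) \dotplus \mathrm{Ker}(T^* - wI)$ is exactly the domain of a maximal symmetric restriction of $T^*$ when the deficiency indices are equal (it is the domain of a proper extension obtained by adjoining one defect subspace). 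The key structural point is that $T_w$ is then a \emph{maximal} operator whose defect is supported only on one side: concretely, $\mathrm{Ker}(T_w^* - \lambda I) = \{0\}$ for $\lambda \in \C_-$ (when $w \in \C_+$), while $T_w - \lambda I$ is surjective for $\lambda \in \C_-$. This is what forces the spectrum into the closed upper half-plane.

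The core steps, in order, are: (i) show $T_w - \lambda I$ is bounded below for every $\lambda \in \C_-$ — this is immediate since $T_w \subset T^*$ acts on a domain containing only $\mathscr D(T)$ plus the $w$-defect vectors, and for $x \in \mathscr D(T)$ symmetry gives the usual estimate $\|(T_w - \lambda I)x\| \ge |\Im\lambda|\,\|x\|$, which one then extends to all of $\mathscr D_w$ by checking that adjoining $\mathrm{Ker}(T^*-wI)$ cannot destroy the lower bound for $\lambda$ in the \emph{opposite} half-plane (here the hypothesis $w\in\Omega$ being a regular point, not merely $w \in \C_+$, is what lets the argument also cover the case $w$ real with the spectrum landing in $\R$); (ii) show $\mathrm{Rng}(T_w - \lambda I) = \mathcal H$ for $\lambda \in \C_-$, equivalently $\mathrm{Ker}(T_w^* - \bar\lambda I) = \{0\}$ for $\bar\lambda \in \C_+$ — compute $T_w^*$ as the restriction of $T$ (or of a symmetric extension of $T$) and observe its defect indices have collapsed to $(0,\cdot)$ on the relevant side; (iii) combine (i) and (ii): a closed operator that is bounded below and surjective on $\mathcal H - \lambda I$ has $\lambda$ in its resolvent set, so $\C_- \subset \rho(T_w)$, i.e. $\sigma(T_w) \subset \overline{\C_+}$. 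The three cases $w \in \C_+$, $w \in \R\cap\Omega$, $w \in \C_-$ are then handled symmetrically by replacing $\C_+$ with $\R$ or $\C_-$ and the $w$-defect space by the appropriate one; the real case uses that $w$ being regular makes $\mathrm{Ker}(T^*-wI)$ well-defined and of full dimension $n$, so the same adjunction works.

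I expect the main obstacle to be step (ii), the surjectivity/vanishing-defect claim for $T_w$: one has to be careful about the relationship between $(T_w)^*$ and restrictions of $T$, since $T_w$ is defined as a \emph{closure} and a priori $\mathscr D_w$ need not be closed in the graph norm, so one must verify that $\overline{T^*|_{\mathscr D_w}}$ still has domain contained in $\mathscr D(T^*)$ and that no extra defect vectors sneak in under the closure. The clean way around this is to note that $\mathscr D(T) + \mathrm{Ker}(T^*-wI)$ is already graph-closed when $w$ is a regular point (the defect subspace is finite-dimensional when $n<\infty$, and closed in general, and it meets $\mathscr D(T)$ only at $0$ with a closed-range/angle estimate coming from regularity), so in fact $T_w = T^*|_{\mathscr D_w}$ with no closure needed, after which the adjoint computation $T_w^* = T|_{\{x \in \mathscr D(T^*): x \perp_{T^*} \mathrm{Ker}(T^*-wI)\}}$ is routine. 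Once that technical point is secured, the spectral localization is a direct consequence of the bounded-below-plus-dense-range dichotomy for closed operators.
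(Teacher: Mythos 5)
Your plan is correct in outline and reaches the lemma, but it differs from the paper's proof in the surjectivity half. The paper argues in two moves: first a numerical-range (dissipativity) computation — writing $u=v+\psi_w$ with $v\in\mathscr{D}(T)$, $\psi_w\in\mbox{Ker}(T^*-wI)$, the cross terms cancel and $\Im\langle T_w u,u\rangle=\Im(w)\|\psi_w\|^2\geqslant 0$, which yields the lower bound $\|(T_w-zI)u\|\geqslant|\Im z|\,\|u\|$ for $z\in\C_-$; second, surjectivity is proved only at the single point $\bar w$, by noting that $\phi\perp\mbox{Rng}(T_w-\bar wI)$ forces $\phi\in\mbox{Ker}(T^*-wI)\subset\mathscr{D}(T_w)$ and then $(w-\bar w)\|\phi\|^2=0$, after which the constancy of $\dim\mbox{Rng}(T_w-zI)^{\perp}$ on connected components of the regular set of $T_w$ (cited from \cite[Section 78]{A-G}) spreads surjectivity over all of $\C_-$. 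You instead propose to compute $T_w^*$ via the von Neumann decomposition $\mathscr{D}(T^*)=\mathscr{D}(T)\dotplus\mbox{Ker}(T^*-wI)\dotplus\mbox{Ker}(T^*-\bar wI)$, identify $T_w^*$ as the restriction of $T^*$ to $\mathscr{D}(T)\dotplus\mbox{Ker}(T^*-\bar wI)$, and show its kernel vanishes at points of $\C_+$. That is a legitimate, more self-contained extension-theoretic alternative to the paper's deficiency-constancy citation; its price is the boundary-form computation identifying $T_w^*$, plus a separate treatment of real regular $w$ (the paper simply quotes \cite[Section 83]{A-G} for self-adjointness of $T_w$ there). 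Your observation that $\mathscr{D}_w$ is already graph-closed at a regular point is also true (via bounded-below-plus-closed-range of $T-wI$), but it is not actually needed: the paper only uses that $\mathscr{D}_w$ is a core for $T_w$ and that $\mbox{Rng}(T_w-\bar wI)\supset\mbox{Rng}(T-\bar wI)$.

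Two caveats you should repair when writing this up. First, the step you describe as ``checking that adjoining $\mbox{Ker}(T^*-wI)$ cannot destroy the lower bound'' is precisely where the lemma's content lies; the mechanism is the identity $\Im\langle T_w u,u\rangle=\Im(w)\|\psi_w\|^2$ (using $T^*\psi_w=w\psi_w$ and symmetry of $T$), and the same dissipativity computation applied to $T_{\bar w}=T_w^*$ is what justifies your ``defect collapses'' claim in step (ii) — so it must be stated explicitly, not deferred. Second, your overview asserts $\mbox{Ker}(T_w^*-\lambda I)=\{0\}$ for $\lambda\in\C_-$, which is the wrong half-plane (indeed $\mbox{Ker}(T^*-\bar wI)\subset\mbox{Ker}(T_w^*-\bar wI)$ with $\bar w\in\C_-$); your detailed step (ii) has the correct statement, namely vanishing of $\mbox{Ker}(T_w^*-\bar\lambda I)$ for $\bar\lambda\in\C_+$.
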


\begin{proof}

    We will prove the lemma when $w \in \mathbb{C} _+$. The proofs of the other two cases are analogous. Note that when $w \in \mathbb{R} \cap \Omega$  a proof that $T_w$ is in fact self-adjoint is found in \cite[Section 83]{A-G}.

    First we show, for any $u \in \mathscr{D} _w$, that $\Im \left( \langle T_w u , u \rangle  \right) \geqslant 0$ (assuming $w \in \mathbb{C} _+$). Since $\mathscr{D} _w$ is by definition a core for $T_w$,
this will prove that $T_w -z I $ is bounded below for any $z \in \mathbb{C} _-$. Any $u \in \mathscr{D} _w$ can be written as $u = v + \psi _w $ where $u \in \mathscr{D}(T)$ and $\psi _w \in \mbox{Ker} (T^* -w I)$.
It follows that
\begin{eqnarray}
\langle T_w u , u \rangle & = & \langle T v + w \psi _w , v + \psi _w \rangle \nonumber \\
    & = & \langle T v , v \rangle + (\overline{w} \langle v , \psi _w \rangle + w \langle \psi _w , v \rangle ) + w \| \psi _w \| ^2 ,
\end{eqnarray}
and so \[ \Im  \langle T_w u , u \rangle  = \Im(w) \| \psi _w \| ^2 \geqslant 0. \] To show that the spectrum of $T_w$ is contained in $\overline{\C_{+}}$, it remains
to verify that $T_w - z I $ is onto for any $z \in \mathbb{C} _-$.
First we show that $T_w - \overline{w} I$ is onto. If $\phi \in \mathcal{H}$ and $\phi \perp \mbox{Rng} (T_w - \overline{w} I )$ then
$\phi \perp \mbox{Rng} (T - \overline{w} I)$, so that $\phi \in \mbox{Ker}(T^* -w I)$, and \[0 = \langle (T_w -\overline{w} I ) \phi , \phi \rangle = (w -\overline{w} ) \| \phi \| ^2. \]
Since $w \notin \mathbb{R}$ this shows that $\phi =0$ and proves that $T_w - \overline{w} I$ is onto. The fact that $\Im \langle T_w u , u \rangle  \geqslant 0$ for any $u \in \mathscr{D} _w$
implies that every $z \in \mathbb{C} _-$ is regular for $T_w$.  By \cite[Section 78]{A-G} the dimension of $\mbox{Rng} ( T_w - z I ) ^\perp $ is constant for $z$ in any connected component of the set of regular
 points of $T_w$. It follows that $T_w -z I$ is onto for all $z \in \mathbb{C} _-$.
\end{proof}

\begin{proof}[Proof of Proposition \ref{prop:model}]
 This proof is adapted from a resolvent formula from \cite[Sec.~84]{A-G}.  Let $T'$ be any (canonical) self-adjoint extension of $T$ and note that for each fixed  $\lambda \not \in \R$, the operator $T' - \lambda I: \mathscr{D}(T') \to \mathcal{H}$ is onto. Define
 \begin{equation} \label{K-form}
U_{\lambda} := (T' - i I) (T' - \lambda I)^{-1} = I + (\lambda - i) (T' - \lambda I)^{-1}
\end{equation} and observe that $U_{\lambda} : \mathcal{H} \to \mathcal{H}$ is one-to-one and onto. Moreover, for any $x \in \mathcal{H}$, the map $\lambda \mapsto U_{\lambda} x$ is an $\mathcal{H}$-valued analytic function on $\C \setminus \R$. By \cite[Sec.~84]{A-G}  $U_{\lambda}$ is a bounded invertible operator from $\mbox{Rng}(T + iI)^{\perp}$ onto $\mbox{Rng}(T - \overline{\lambda} I)^{\perp}$. Indeed, the inverse of $U_{\lambda}$ is
$$(T' - \lambda) (T' - i I)^{-1}.$$

Recall that $\mathcal{K}$ is any complex separable Hilbert space whose dimension is equal to
$$\dim \mbox{Rng}(T - i I)^{\perp} = \dim \mbox{Rng}(T + i I)^{\perp}  = n.$$ Let $j$ be any bounded isomorphism from $\mathcal{K}$ onto $\mbox{Rng}(T + i I)^{\perp}$. Finally define
\begin{equation} \label{Krein-trick}
\Gamma(\lambda) := U_{\overline{\lambda}} j.
\end{equation}
 With the exception of condition (\ref{equation:condition}),
 one can easily check that $\Gamma$ satisfies the conditions of a model for $T$.
We will now show that $\Gamma( z ) ^* \Gamma (\lambda)$ is invertible whenever $z,\lambda $ are both in $\mathbb{C} _+$ or both in $\mathbb{C} _-$. In particular, this will prove that $\Gamma$ obeys condition (\ref{equation:condition}).

Without loss of generality assume that $\mathcal{K} := \mathbb{C} ^n$ with canonical orthonormal basis $\{ e_k \}_{k = 1}^{n}$. When $n=\infty$, we define $\mathbb{C} ^\infty := \ell^2(\mathbb{N})$, the Hilbert space of square-summable sequences of complex numbers. Since $j$ is a bounded isomorphism, we see that the set $\{ \gamma _k (i) \} _{k=1} ^n$ where $\gamma _k (i) := j e_k$ is a basis (in fact a Riesz basis) for $\mbox{Ker} (T^* -i I)$. For any $\lambda \in \mathbb{C} _+$, the set $\{\gamma _k (\lambda)\}_{k = 1}^{n}$ where $\gamma _k (\lambda) := U_\lambda \gamma _k (i)$ is a
basis (in general non-orthonormal) for $\mbox{Ker}(T^* -\lambda I)$. It follows that we have the matrix representation
$$ \Gamma (\overline{z} ) ^* \Gamma ( \overline{\lambda} ) = \left[ \langle \gamma _j (\lambda ) , \gamma _k (z) \rangle \right]_{1 \leqslant j, k \leqslant n} \label{matrixrep} $$
for $z, \lambda \in \mathbb{C} \setminus \mathbb{R}$. We will need the fact that \{$\gamma _j (z )\}_{j = 1}^{n}$ is actually a Riesz basis for $\mbox{Ker}(T^* - z I )$, \emph{i.e.}, the image of an orthonormal
basis under a bounded, invertible operator. This implies there are constants $0 < c \leqslant C$ such that for any $\psi \in \mbox{Ker}(T^* -z I ) $,
\begin{equation} c \| \psi \| ^2 \leqslant \sum_{j = 1}^{n} | \langle \gamma _j (z) , \psi \rangle | ^2 \leqslant C \| \psi \| ^2 . \label{Riesz} \end{equation}  To see this, choose an orthonormal basis $\{ \delta _j (z ) \}_{j = 1}^{n}$ for $\mbox{Ker}(T^* - z I )$
and let $U : \mbox{Ker}(T^* - z I) \rightarrow \mathbb{C} ^n$ be the isometry defined by $U \delta _j (z) = e_j $. It follows that the linear map
$$V : \mbox{Ker}(T^* - z I ) \rightarrow \mbox{Ker}(T^* - z I), \quad V := U_z j U $$ is invertible. Hence for any $\psi \in \mbox{Ker}(T^* -z I) $,
$$ \sum_{j = 1}^{n} | \langle \gamma _j (z) , \psi \rangle | ^2 = \sum_{j = 1}^{n} | \langle \delta _j (z) , V^* \psi \rangle | ^2 = \| V^* \psi \| ^2, $$ and since $V^*$ is bounded above and below (because $V$ is invertible),
equation (\ref{Riesz}) follows.

Observe that if $\vec{c} \in \mathbb{C} ^n$ we have
 \[ \Gamma (\overline{z}) ^* \Gamma (\overline{\lambda}) \vec{c}  = ( \langle \gamma _k (\lambda ) , \psi _{\vec{c}} (z) \rangle ) _{k=1} ^n,  \]  where
\[ \psi _{\vec{c}} (z)  =  \sum _{k=1} ^n \overline{ c_k } \gamma _k (z) = \Gamma (\overline{\lambda} )  \vec{c} _{c} . \] Here $\vec{c} _c$ denotes the component-wise complex-conjugate of the vector $\vec{c}$.
If $\vec{c}$ has unit norm, then since $\Gamma (\lambda ) : \mathbb{C} ^n \rightarrow \mbox{Ker}(T^* -\overline{\lambda} I )$ is bounded and invertible, it follows that there are constants $c(z), C(z) > 0$ such that
$$c(z) \leqslant \| \psi _{\vec{c}} (z) \| \leqslant C(z), \quad \forall \vec{c} \in \C^{n}, \|\vec{c}\| = 1.$$ Hence in order to prove that $\Gamma (\overline{z} ) ^* \Gamma (\overline{\lambda})$ is bounded below, it suffices to show, for any unit norm $\psi ( z) \in \mbox{Ker}(T^* - z I)$, that the sequence $$\vec{b}  := ( \langle \gamma _k (\lambda) , \psi (z) \rangle )_{k = 1}^{n} $$ is bounded below in the norm of $\mathbb{C} ^n$.

Now suppose that both $z$ and $\lambda$ belong to $\mathbb{C} _+$ or both belong to $\mathbb{C} _-$ and assume that $\Gamma(\overline{z}) ^* \Gamma (\overline{\lambda} )$ is not bounded below. Then, by the discussion above, there exists a sequence of unit norm vectors $\psi _k (z) \in \mbox{Ker}(T^* -z I)$,  such that \[  \vec{b} _k := ( \langle \gamma _j (\lambda) , \psi _k (z) \rangle ) _{j=1} ^n   \rightarrow 0 \] in $\mathbb{C} ^{n}$ norm as $k \to \infty$.
Let $\phi _k = P \psi _k (z)$ be the projection of $\psi _k (z) $ onto $\mbox{Rng} (T - \overline{\lambda} I) = \mbox{Ker}(T^* -\lambda I)^\perp$, where $I-P$ is the projection
onto $\mbox{Ker}(T^* -\lambda I)$. Since we assume that the $\psi _k (z)$
all have unit norm, we see that $\| \phi _k  \| = \| P \psi _k (z) \| \leqslant 1$ for all $k$. Since $\{ \gamma _j (\lambda) \}_{j = 1}^{n}$ is actually a Riesz basis for $\mbox{Ker}(T^* -\lambda I)$, it can be shown
that $\psi _k (z) - \phi_k  \rightarrow 0$ in norm as $k \rightarrow \infty$ so that $\| \phi _k  \|  \rightarrow 1$ . To see this last fact note that by equation (\ref{Riesz}) there is a constant $c >0$ such that
\begin{eqnarray}
 \|  \psi _k (z)  - \phi _k  \| ^2 & =  & \| (I-P ) \psi _k (z) \| ^2\\
 & \leqslant & \frac{1}{c} \sum _j | \langle \gamma _j (\lambda) , (I-P ) \psi _k (z) \rangle | ^2 \nonumber \\
 & = & \frac{1}{c} \sum _j  | \langle \gamma _j (\lambda) ,  \psi _k (z) \rangle | ^2 \nonumber\\
 & = & \frac{1}{c} \| \vec{b} _k \| ^2 _{\mathbb{C} ^n}, \end{eqnarray}
which vanishes as $k \rightarrow \infty$ by assumption.

Since $\phi _k \in \mbox{Rng} (T- \overline{\lambda} I ) $, it follows that $\phi _k = (T- \overline{\lambda} I ) \varphi _k$ for some sequence
$\varphi _k \in \mathscr{D} (T)$.
Now consider
\begin{eqnarray} (T-z I) \varphi _k & = &( T - \overline{\lambda} I) \varphi _k - (z -\overline{\lambda}) \varphi _k \nonumber \\
& = & \phi _k -  (z -\overline{\lambda}) \varphi _k. \label{eq:bndbelow} \end{eqnarray}
We want to show that this vanishes as $k \rightarrow \infty$ and that $\| \varphi _k \| $ is uniformly bounded below in norm. This will show that $T-z I$ is not bounded below, contradicting the fact that $z \in \mathbb{C} _\pm $ is a regular point for $T$.
Since $\overline{\lambda} \in \mathbb{C} _\mp$ is not in the spectrum of $T_z$, and each eigenvector $\psi _k (z)$ is an eigenvector of $T_z$ corresponding to the eigenvalue $z$, we can write
\begin{eqnarray}
(z -\overline{\lambda } ) ^{-1} \phi _k -\varphi _k & = & \left( (z -\overline{\lambda } ) ^{-1} - (T_z -\overline{\lambda}I ) ^{-1} \right) \phi _k  \nonumber \\
& =& \left( (z -\overline{\lambda } ) ^{-1} - (T_z -\overline{\lambda} I) ^{-1} \right) (\phi _k - \psi _k (z) ).
\end{eqnarray}
This vanishes as $k \to \infty$ since $\psi _k (z) - \phi _k \rightarrow 0$ in norm. Moreover, by equation (\ref{eq:bndbelow}),
$$ | z - \overline{\lambda} | \| \varphi _k \| \geqslant \| \phi _k \| - \| (T-z I) \varphi _k \| \rightarrow 1, $$ which shows that the $\| \varphi _k \| $ are uniformly bounded below in norm.
This implies $T-z I$ is not bounded below, contradicting the assumption that $z \in \mathbb{C} \setminus \mathbb{R}$.

The above proves that $\Gamma(z)^* \Gamma (\lambda )$ is bounded below whenever $z, \lambda \in \mathbb{C} _\pm$. Since the adjoint of $\Gamma (z) ^* \Gamma (\lambda)$
is $\Gamma (\lambda )^* \Gamma (z)$, and $\Gamma (z) ^* \Gamma (\lambda)$ is not onto if and only if its adjoint has non-zero kernel, this actually proves that $\Gamma (z) ^* \Gamma (\lambda )$ is invertible
whenever $z, \lambda \in \mathbb{C} _+$ or $z,\lambda \in \mathbb{C} _-$.
\end{proof}

\begin{Remark}
The key point of this, perhaps overly formal, approach is that one is free to choose the model and is not restricted to the one given by the above Krein construction. We will give many examples, and take advantage, of this freedom below.
\end{Remark}

\subsection{The model space}
For a model $\Gamma$ we now define an associated vector-valued Hilbert space of analytic functions $\mathcal{H}(\Gamma)$ associated with our underlying Hilbert space $\mathcal{H}$ on which $T$ acts. For $f \in \mathcal{H}$ define
$$\widehat{f}: \C \setminus \R \to \mathcal{K}, \quad \widehat{f}(\lambda) := \Gamma(\lambda)^{*} f,$$ and
$$\mathcal{H}(\Gamma) := \left\{\widehat{f}: f \in \mathcal{H}\right\}.$$

\begin{Proposition} \label{H-Gamma}
With an inner product on $\mathcal{H}(\Gamma)$ defined by
$$\langle \widehat{f}, \widehat{g} \rangle_{\mathcal{H}(\Gamma)} := \langle f, g \rangle_{\mathcal{H}},$$
$\mathcal{H}(\Gamma)$ is a vector-valued reproducing kernel Hilbert space of analytic functions on $\C \setminus \R$. Moreover, the reproducing kernel function for $\mathcal{H}(\Gamma)$ is
$$K_{\lambda}(z) = \Gamma(z)^{*} \Gamma(\lambda),$$
i.e., for any $a \in \mathcal{K}$ and $\widehat{f} \in \mathcal{H}(\Gamma)$,
\begin{equation} \label{RKHS}
\langle \widehat{f} (\lambda ) , a \rangle_{\mathcal{K}} = \langle \widehat{f}, K_{\lambda}(\cdot) a \rangle_{\mathcal{H}(\Gamma)}.
\end{equation}
\end{Proposition}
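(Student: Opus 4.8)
The plan is to show first that $f \mapsto \widehat f$ is a linear bijection from $\mathcal H$ onto $\mathcal H(\Gamma)$, so that the prescription $\langle \widehat f, \widehat g\rangle_{\mathcal H(\Gamma)} := \langle f,g\rangle_{\mathcal H}$ is unambiguous and makes $\mathcal H(\Gamma)$ a Hilbert space unitarily isomorphic to $\mathcal H$. Linearity is immediate from $\widehat f(\lambda) = \Gamma(\lambda)^* f$. For injectivity, suppose $\widehat f \equiv 0$. Then $\Gamma(\lambda)^* f = 0$, equivalently $f \perp \mbox{Rng}\,\Gamma(\lambda)$, for every $\lambda \in \C\setminus\R$; since the model satisfies $\bigvee_{\Im\lambda\neq 0}\mbox{Rng}\,\Gamma(\lambda) = \mathcal H$, this forces $f = 0$. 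Hence the inner product is well defined and, transporting it along the bijection $f \mapsto \widehat f$, $\mathcal H(\Gamma)$ inherits completeness from $\mathcal H$.

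Second, I would note that each $\widehat f$ really is a $\mathcal K$-valued analytic function on $\C\setminus\R$: co-analyticity of $\Gamma$ means that $\lambda \mapsto \Gamma(\lambda)^*$ is an analytic $\mathcal B(\mathcal H,\mathcal K)$-valued map, hence $\widehat f(\lambda) = \Gamma(\lambda)^* f$ is analytic for each fixed $f$. (Equivalently, $\lambda\mapsto\langle\widehat f(\lambda),a\rangle_{\mathcal K}$ is analytic for every $a\in\mathcal K$, so $\widehat f$ is weakly and therefore strongly analytic.)

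Third, and this is the substantive point, I would identify the kernel by exhibiting the would-be kernel functions as elements of $\mathcal H(\Gamma)$. Fix $\lambda \in \C\setminus\R$ and $a\in\mathcal K$ and put $g := \Gamma(\lambda)a \in \mathcal H$. Then $\widehat g(z) = \Gamma(z)^*\Gamma(\lambda)a = K_\lambda(z)a$ for all $z$, so $z \mapsto K_\lambda(z)a$ belongs to $\mathcal H(\Gamma)$. For an arbitrary $\widehat f \in \mathcal H(\Gamma)$,
$$\langle \widehat f, K_\lambda(\cdot)a\rangle_{\mathcal H(\Gamma)} = \langle \widehat f,\widehat g\rangle_{\mathcal H(\Gamma)} = \langle f,g\rangle_{\mathcal H} = \langle f,\Gamma(\lambda)a\rangle_{\mathcal H} = \langle \Gamma(\lambda)^*f,a\rangle_{\mathcal K} = \langle \widehat f(\lambda),a\rangle_{\mathcal K},$$
which is exactly \eqref{RKHS}. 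In particular every functional $\widehat f \mapsto \langle \widehat f(\lambda),a\rangle_{\mathcal K}$ is bounded, so $\mathcal H(\Gamma)$ is a vector-valued reproducing kernel Hilbert space with kernel $K_\lambda(z) = \Gamma(z)^*\Gamma(\lambda)$.

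I do not expect a genuine obstacle: the argument is a direct unwinding of the definitions. The only model axiom used is the spanning condition $\bigvee\mbox{Rng}\,\Gamma(\lambda)=\mathcal H$ (for well-definedness) together with co-analyticity of $\Gamma$ (for analyticity of the functions); the invertibility hypothesis \eqref{equation:condition} on $\Gamma(z)^*\Gamma(\lambda)$ plays no role in this proposition and is reserved for the later factorization of $K$. The one step meriting a word of care is the passage from co-analyticity of $\Gamma$ to analyticity of $\widehat f$, which is handled by the standard weak-implies-strong analyticity principle for Banach-space-valued functions.
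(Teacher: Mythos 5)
Your proposal is correct and takes essentially the same route as the paper: check that the inner product is well defined (i.e.\ $\widehat f\equiv 0$ forces $f=0$) and then verify \eqref{RKHS} by the direct computation $\langle \widehat f, K_\lambda(\cdot)a\rangle_{\mathcal H(\Gamma)}=\langle f,\Gamma(\lambda)a\rangle_{\mathcal H}=\langle \widehat f(\lambda),a\rangle_{\mathcal K}$, noting that $K_\lambda(\cdot)a$ is the transform of $\Gamma(\lambda)a$. The only cosmetic difference is in the injectivity step, where you invoke the spanning axiom $\bigvee_{\Im\lambda\neq 0}\mbox{Rng}\,\Gamma(\lambda)=\mathcal H$ while the paper invokes simplicity of $T$ through $\mbox{Ker}\,\Gamma(\lambda)^{*}=\mbox{Rng}(T-\lambda I)$; since the model axioms give $\mbox{Rng}\,\Gamma(\lambda)=\mbox{Rng}(T-\lambda I)^{\perp}$, these are the same fact.
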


\begin{proof}
The only significant things to check here are that (i) $\|\widehat{f}\|_{\mathcal{H}(\Gamma)} = 0$ if and only if $\widehat{f}(\lambda) = 0$ for all $\lambda \in \C \setminus \R$; and (ii) the reproducing kernel formula from \eqref{RKHS}.

Fact (i) follows from the fact that $T$ is simple.
Indeed, for each $\lambda \in \C \setminus \R$,
$$\widehat{f}(\lambda) := \Gamma(\lambda)^{*} f = 0_{\mathcal{K}} \Leftrightarrow f \in \mbox{Ker} (\Gamma(\lambda)^{*} ) = \mbox{Rng}(T - \lambda I).$$
Thus if $\widehat{f}(\lambda) = 0$ for all $\lambda \in \C \setminus \R$, we use  \eqref{simple-defn} to see that $f = 0_{\mathcal{H}}$.

To prove (ii) let $\widehat{f} \in \mathcal{H}(\Gamma)$ and $a \in \mathcal{K}$. Then
\begin{align*}
\langle \widehat{f}, K_{\lambda}(\cdot) a\rangle_{\mathcal{H}(\Gamma)} & = \langle \Gamma(\cdot)^{*} f, \Gamma(\cdot)^{*} \Gamma(\lambda) a \rangle_{\mathcal{H}(\Gamma)}\\
& = \langle f, \Gamma(\lambda) a \rangle_{\mathcal{H}}\\
& = \langle \Gamma(\lambda)^{*} f, a \rangle_{\mathcal{K}}\\
& = \langle \widehat{f}(\lambda), a \rangle_{\mathcal{K}},
\end{align*}
which proves the reproducing kernel formula in \eqref{RKHS}.
\end{proof}

\begin{Proposition} \label{H-Gamma2}
For $T \in \mathcal{S}_{n}(\mathcal{H})$ with model $\Gamma$, the operator $M_{\Gamma}$ on defined on
$$\mathscr{D}(M_{\Gamma}) := \left\{\widehat{f} \in \mathcal{H}(\Gamma): z \widehat{f} \in \mathcal{H}(\Gamma)\right\}$$
by $M_{\Gamma} \widehat{f} = z \widehat{f}$
is densely defined and belongs to $\mathcal{S}_{n}(\mathcal{H}(\Gamma))$. Moreover, $M_{\Gamma}$ is unitarily equivalent to $T$.
\end{Proposition}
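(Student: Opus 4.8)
The plan is to exhibit an explicit unitary $W\colon\mathcal H\to\mathcal H(\Gamma)$ implementing the equivalence, namely $Wf:=\widehat f=\Gamma(\cdot)^*f$. By the definition of the inner product on $\mathcal H(\Gamma)$ this map is isometric, it is onto by the definition of $\mathcal H(\Gamma)$, and it is injective because $T$ is simple (this is exactly fact (i) in the proof of Proposition \ref{H-Gamma}); hence $W$ is unitary. Once we show that $W$ carries $T$ onto $M_\Gamma$, i.e.\ that $W\mathscr D(T)=\mathscr D(M_\Gamma)$ and $M_\Gamma Wf=WTf$ for $f\in\mathscr D(T)$, everything else is automatic: $M_\Gamma$ is densely defined because $\mathscr D(T)$ is, and $M_\Gamma$ inherits from $T$ (through $W$) the properties of being closed, symmetric, simple, and of having deficiency indices $(n,n)$, so $M_\Gamma\in\mathcal S_n(\mathcal H(\Gamma))$ and $M_\Gamma\cong T$.

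First I would prove the inclusion $W\mathscr D(T)\subseteq\mathscr D(M_\Gamma)$ together with the formula $M_\Gamma\widehat f=\widehat{Tf}$. For $f\in\mathscr D(T)$, $\lambda\in\C\setminus\R$ and $a\in\mathcal K$, the vector $\Gamma(\lambda)a$ lies in $\mbox{Rng}(T-\lambda I)^{\perp}=\mbox{Ker}(T^*-\overline\lambda I)\subset\mathscr D(T^*)$, so $T^*\Gamma(\lambda)a=\overline\lambda\,\Gamma(\lambda)a$ and
\[
\langle\widehat{Tf}(\lambda),a\rangle_{\mathcal K}=\langle Tf,\Gamma(\lambda)a\rangle_{\mathcal H}=\langle f,T^*\Gamma(\lambda)a\rangle_{\mathcal H}=\lambda\langle f,\Gamma(\lambda)a\rangle_{\mathcal H}=\langle\lambda\widehat f(\lambda),a\rangle_{\mathcal K}.
\]
Thus $z\widehat f=\widehat{Tf}\in\mathcal H(\Gamma)$, so $\widehat f\in\mathscr D(M_\Gamma)$ and $M_\Gamma\widehat f=\widehat{Tf}=WTf$.

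The main obstacle is the reverse inclusion $\mathscr D(M_\Gamma)\subseteq W\mathscr D(T)$, because a priori an element of $\mathscr D(M_\Gamma)$ is merely some $\widehat f$ with $z\widehat f=\widehat g$ for a $g\in\mathcal H$ with no evident relation to $\mathscr D(T)$. To handle it I would argue as follows. The relation $z\widehat f=\widehat g$ says $\Gamma(\lambda)^*(g-\lambda f)=0$ for all $\lambda\in\C\setminus\R$; specializing to $\lambda=i$ and using $\mbox{Ker}(\Gamma(i)^*)=\mbox{Rng}(T-iI)$ — a closed subspace, since $T$ is closed and $i$ is a regular point — we may write $g-if=(T-iI)h$ with $h\in\mathscr D(T)$. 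By the previous step $\widehat h\in\mathscr D(M_\Gamma)$ with $z\widehat h=\widehat{Th}$, and since $Th=(T-iI)h+ih=g+i(h-f)$, linearity of $f\mapsto\widehat f$ gives $z\widehat h=\widehat g+i(\widehat h-\widehat f)=z\widehat f+i(\widehat h-\widehat f)$, that is, $(z-i)(\widehat h-\widehat f)\equiv 0$ as $\mathcal K$-valued functions on $\C\setminus\R$. Hence $\widehat h$ and $\widehat f$ agree on $\C\setminus\R\setminus\{i\}$, and therefore everywhere on $\C\setminus\R$ since functions in $\mathcal H(\Gamma)$ are analytic there; so $h=f$ by injectivity of $W$, giving $f=h\in\mathscr D(T)$. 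This yields $\mathscr D(M_\Gamma)=W\mathscr D(T)$ and $M_\Gamma=WTW^{-1}$, and the proposition follows. (Note that this route sidesteps checking symmetry, closedness, and the deficiency index of $M_\Gamma$ by hand — they come for free once $M_\Gamma$ is identified with $WTW^{-1}$.)
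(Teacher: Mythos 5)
Your proposal is correct and follows essentially the same route as the paper: the same unitary $W f = \Gamma(\cdot)^*f$, the easy inclusion $W\mathscr{D}(T)\subseteq\mathscr{D}(M_\Gamma)$ with the intertwining via the fact that $\Gamma(\lambda)a$ annihilates $\mathrm{Rng}(T-\lambda I)$ (equivalently, is a $T^*$-eigenvector), and the reverse inclusion by writing the relevant vector as $(T-\lambda I)h$, cancelling the factor $z-\lambda$, and invoking simplicity to conclude $f=h\in\mathscr{D}(T)$. The only cosmetic differences are that you fix $\lambda=i$ (adding a continuity remark at the removed point) where the paper argues with arbitrary $\lambda$, and that you spell out explicitly that closedness, symmetry, simplicity and the indices transfer through $W$.
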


\begin{proof}
Let
$$U: \mathcal{H} \to \mathcal{H}(\Gamma), \quad (U f)(z) := \widehat{f}(z) = \Gamma(z)^{*} f$$ and note by Proposition \ref{H-Gamma} that $U$ is an isometric isomorphism. We need to show that if
$$\mathscr{D}(M_{\Gamma}) := \left\{\widehat{f} \in \mathcal{H}(\Gamma): z \widehat{f} \in \mathcal{H}(\Gamma)\right\},$$ then
\begin{equation} \label{UTdomains}
U \mathscr{D}(T) = \mathscr{D}(M_{\Gamma})
\end{equation}
and
\begin{equation} \label{intertwine}
U T = M_{\Gamma} U.
\end{equation}

Let us first show the $\supset$ containment in \eqref{UTdomains}. Indeed let $U f \in \mathscr{D}(M_{\Gamma})$, i.e., $z U f \in \mathcal{H}(\Gamma)$. Then for any $\lambda \in \C \setminus \R$ the function
$$z \mapsto (z - \lambda) U f$$ is a function in $\mathcal{H}(\Gamma)$ which is zero at $\lambda$ and so, using the fact from the proof of the previous proposition that
\begin{equation} \label{Gamma-star-0}
\widehat{f}(z) = \Gamma(z)^{\ast} f = 0 \Leftrightarrow f \in \mbox{Rng}(T - z I),
\end{equation}
we see that
$$(z - \lambda) U f = U f_{\lambda}$$ for some $f_{\lambda} \in \mbox{Rng}(T - \lambda I)$. Thus $
$$f_{\lambda} = (T - \lambda I) g_{\lambda}$, where $g_{\lambda} \in \mathscr{D}(T)$. We now need to prove that
$f = g_{\lambda}$.  Indeed,
\begin{align*}
(z - \lambda) \Gamma(z)^{*} f  & = (z - \lambda) (U f)(z)\\
& = (U f_{\lambda})(z)\\
& = (U (T - \lambda I) g_{\lambda})(z)\\
& = \Gamma(z)^{*} (T - \lambda I) g_{\lambda}\\
& = \Gamma(z)^{*} ((T - z I) g_{\lambda} + (z - \lambda) g_{\lambda})\\
& = \Gamma(z)^{*} (T - z I) g_{\lambda} + (z - \lambda)\Gamma(z)^{*} g_{\lambda}\\
& = 0 + (z - \lambda) \Gamma(z)^{*} g_{\lambda}.
\end{align*}
The last equality follows from \eqref{Gamma-star-0}.
This means that $\Gamma(z)^{*} f = \Gamma(z)^{*} g_{\lambda}$ for all $z \in \C \setminus \R$. From \eqref{Gamma-star-0} and the fact that $T$ is simple (see \eqref{simple-defn}) it follows that $f  = g_{\lambda}$. Thus we have shown the $\supset$ containment in \eqref{UTdomains}.

For the $\subset$ containment in \eqref{UTdomains}, let $f \in \mathscr{D}(T)$. Then for any $z \in \C \setminus \R$,
\begin{align*}
(U T f)(z) & = \Gamma(z)^{*} T f\\
& = \Gamma(z)^{*} ((T - z I) f + z f)\\
& = 0 + z \Gamma(z)^{*} f\\
& = M_{\Gamma} (U f)(z).
\end{align*}
This proves the $\subset$ containment in \eqref{UTdomains} along with the intertwining identity in \eqref{intertwine}.
\end{proof}

\begin{Remark} As mentioned earlier, we are not constrained by the Krein trick \eqref{Krein-trick} in selecting our model $\Gamma$ for $T$. There are other methods of constructing a model. For example,  when $n < \infty$, we can use Grauert's theorem, as was used to prove a related result for bounded operators in \cite{Cow-Doug}, to find an analytic vector-valued function
$$\gamma(\lambda) := (\gamma(\lambda)_1, \cdots, \gamma(\lambda)_{n}),$$
where $\{\gamma(\lambda)_1, \cdots, \gamma(\lambda)_{n}\}$ is a basis for $\mbox{Rng}(T - \overline{\lambda} I)^{\perp}$. Then, if $\{e_j\}_{j = 1}^{n}$ is the standard basis for $\C^n$,  we can define our model for $T$ to be
\begin{equation} \label{model-anything}
\Gamma(\lambda) := \sum_{j = 1}^{n} \gamma(\overline{\lambda})_{j} \otimes e_j.
\end{equation}
 From here it is not difficult to compute the matrix representation for $K_\lambda (z)$ in the standard $\{ e_i \}_{j = 1}^{n}$ basis for $\mathbb{C} ^n$ as
\begin{equation} K_\lambda (z) = \left[ \langle \gamma _i (\overline{\lambda}) , \gamma _j (\overline{z}) \rangle \right]_{1 \leqslant i, j \leqslant n}. \label{equation:kernel} \end{equation}
The alert reader might be worried about the verification of property \eqref{equation:condition}, the invertibility of $K_{\lambda}(z)$ for $\lambda, z \in \C_{+}$ or $\lambda, z \in \C_{-}$. Any model $\Gamma$, the Krein model in particular, will take the form in \eqref{model-anything}. Any other model $\widetilde{\Gamma}$ must then take the form
$$\widetilde{\Gamma}(\lambda) := \sum_{j = 1}^{n} \widetilde{\gamma}(\overline{\lambda})_{j} \otimes e_j,$$
where
$$\widetilde{\gamma}(\overline{\lambda})_{j} = \sum_{k = 1}^{n} c_{k, j}(\lambda) \gamma(\overline{\lambda})_{k}.$$
and the matrix $C_{\lambda} := (c_{k, j}(\lambda))_{k, j}$ is invertible. One can now check that
$$\widetilde{K}_{\lambda}(z) = C_z K_{\lambda}(z) C_{\lambda}^{*}.$$ So, if, with the Krein model we have $K_{\lambda}(z)$ is invertible for $\lambda, z \in \C_{+}$ or $\lambda, z \in \C_{-}$, then with any other model will also satisfy this property.

This Grauert's trick will help us avoid dealing with the self-adjoint extensions and the resolvents in Krein's formula \eqref{K-form}, which, as mentioned earlier, can be difficult to compute.
\end{Remark}

\section{Examples}

\subsection{Differentiation} \label{diff-ex}
Consider the simple differential operator $T f = i f'$ defined densely on $L^2[-\pi, \pi]$ with domain $\mathscr{D}(T)$ the Sobolev space of absolutely continuous functions $f$ on $[-\pi, \pi]$ with $f' \in L^2[-\pi, \pi]$ and $f(-\pi) = f(\pi) = 0$. Simple integration by parts will show that $T$ is symmetric and closed. Furthermore, $\mathscr{D}(T^{*})$ consists of the absolutely continuous functions $f$ on $[-\pi, \pi]$ with $f' \in L^2[-\pi, \pi]$. This is quite standard and can be found in many functional analysis books.

Observe, for any $\lambda \in \C \setminus \R$, that
$$\mbox{Ker} (T^{*} - \lambda I) = \{f\in \mathscr{D}(T^{*}): i f' =  \lambda f\} = \C e^{-i \lambda t}$$ and so the deficiency indices are both equal to one. Moreover, $T$ satisfies the simplicity condition \eqref{simple-defn} since
$$\bigvee_{\lambda \not \in \mathbb{R}} \mbox{Ker} (T^{*} - \lambda I) = \bigvee_{\lambda \not \in \mathbb{R}} e^{-i \lambda t} = L^2[-\pi, \pi]$$ via the Stone-Weierstrass theorem. Thus $T \in \mathcal{S}_{1}(L^2[-\pi, \pi])$. Define
$$\gamma(\lambda) := e^{-i \lambda t}, \quad \Gamma(\lambda) := \gamma(\overline{\lambda}) \otimes 1.$$
For $f \in L^2[-\pi, \pi]$ we have
$$\widehat{f}(\lambda) = \Gamma(\lambda)^{*} f = \int_{-\pi}^{\pi} f(t) e^{i \lambda t} dt.$$ Thus our Hilbert space of analytic functions $\mathcal{H}(\Gamma)$ is one of the classical Paley-Wiener spaces \cite{DB}. From here it follows that $T$ is unitarily equivalent to multiplication by the independent variable on the Paley-Wiener space. A computation will show that
$$K_{\lambda}(z) = \int_{-\pi}^{\pi} e^{-i z t} e^{i \overline{\lambda} t} d t = \frac{2 \sin(\pi (z - \overline{\lambda}))}{z - \overline{\lambda}}.$$

\subsection{Double differentiation} \label{double-diff}
Now consider the double differentiation operator $T f = -f''$ initially defined on the set $C_{0}^{\infty}(0, \infty)$ (smooth functions with compact support in $(0, \infty)$) and extend the domain of $T$ to the closure of $C_{0}^{\infty}(0, \infty)$ in the norm $\|f''\|_{L^2}$ -- which will be some Sobolev space. This domain $\mathscr{D}(T)$ is clearly dense in $L^2(0, \infty)$ and a simple computation with integration by parts will show that $T$ is symmetric and closed.

Note that $\mathscr{D}(T^{*})$ contains $C^{\infty}(0, \infty) \cap L^2(0, \infty)$ and moreover, $T$ has deficiency indices equal to one since
$$\mbox{Ker} (T^{*} - \lambda) = \{f \in \mathscr{D}(T^{*}): -f'' = \lambda f\} = \C e^{\pm i \sqrt{\lambda} t}.$$
Note that, depending on whether $\Im \lambda > 0$ or $\Im \lambda < 0$, only \textit{one} of the solutions $e^{\pm i \sqrt{\lambda} t}$ will belong to $L^2(0, \infty)$. Denote this solution by $$e^{i \epsilon_{\lambda} \sqrt{\lambda} t},$$
where $\epsilon_{\lambda} =  1$ if $\Im \lambda > 0$ and $\epsilon_{\lambda} = -1$ if $\Im \lambda < 0$.  Furthermore, one can check, using duality, that
$$\bigvee_{\lambda \not \in \mathbb{R}} \mbox{Ker} (T^{*} - \lambda) =  L^2(0, \infty)$$ and so $T \in \mathcal{S}_{1}(L^2(0, \infty))$. As in the previous example, we can define
$$\gamma(\lambda) := e^{i \epsilon_{\lambda} \sqrt{\lambda} t}, \quad \Gamma(\lambda) := \gamma(\overline{\lambda}) \otimes 1.$$
From here, for $f \in L^2(0, \infty)$,
$$\widehat{f}(\lambda) = \int_{0}^{\infty} f(t) e^{-i \epsilon_{\lambda} \sqrt{\lambda} t} dt.$$
The kernel function is
$$K_{\lambda}(z) = \int_{0}^{\infty} e^{- i \epsilon_{z} \sqrt{z} t} e^{i \epsilon_{\lambda} (\overline{\lambda} ) ^{\frac{1}{2}} t} dt = \frac{- i}{\epsilon_{z} \sqrt{z} - \epsilon_{\lambda} (\overline{\lambda} ) ^{\frac{1}{2}}}.$$

\subsection{Sturm-Liouville operators} \label{SL-ex}
In this example we will consider second-order Sturm-Liouville differential operators on intervals $I = [a,b] \subset \mathbb{R}$. A good reference for this is \cite{Naimark}. In particular, this will include Schr\"{o}dinger operators as a special case. Suppose that $p\geqslant 0$, and $q$ are real-valued functions on $I$ such that $ 1/p$ and $q$ are locally $L ^1$ functions on $(a,b)$, i.e.,  they belong to $L ^1$ of any compact subset of $(a,b)$. Define the dense domain \[ \mathscr{D} ( H (p, q, I) ^* ) := \left\{ f \in L^2 (I) \ | \ f , pf' \in L^1 _{loc} (I) \ \mbox{and} \ -(pf')' + qf \in L^2 [a,b] \right\},  \]
and then define $$H (p,q ,I ) ^* f = -(pf')' + qf, \quad f \in \mathcal{D} (H (p,q,I) ^* ).$$
 The theory of \cite[Section 17]{Naimark} shows that
 $$H (p,q, I) := (H (p,q,I) ^* ) ^*$$  belongs to $\mathcal{S}_{n}(L^2[a, b])$, where $n$ is either  $0, 1$, or $2$, depending on the properties of $p$ and $q$. Furthermore, $ H (p,q,I) ^* $ is its adjoint. Although it is non-trivial, it can be shown that $H(p,q,I)$ is simple whenever it has indices $(1,1)$ or $(2,2)$ \cite{Gil1, Gil2}, and hence $H(p,q,I)$ is either simple or self-adjoint. Recall here that any closed symmetric linear operator with indices $(0,0)$ is self-adjoint.

Fix an interior point $x_0 \in I$ and given any $z\in \mathbb{C}$ let $u_z$ and $v_z$ be solutions to the ordinary differential equation,
$$ -(pf')' + qf = z f, $$
which satisfy $$ \left( \begin{array}{cc} u_z (x_0 )  & p(x_0) u_z ' (x_0)  \\
v_z (x_0)  &  p(x_0) v_z ' (x_0 )  \end{array} \right) = \left( \begin{array}{cc} 1 & 0 \\ 0 & 1 \end{array} \right). $$
Using the method of Picard iterates, typically used to prove the existence-uniqueness theorem for ordinary differential equations, it is not difficult
to show that for any fixed $x \in (a,b)$, the solutions $u_z (x), v_z (x)$ are entire functions of $z$ (see for example \cite[Section 2.3]{Hille} or \cite[pgs. 51-56]{Naimark}). It follows that whenever $u_z, v_z$ belong
 to $L^2 (I)$, they are entire $L^2 (I)$-valued functions. Now suppose that $H(p,q,I)$ has deficiency indices $(2,2)$.
This happens, for example, if both $a,b$ are finite and $q, 1/p \in L^1 [a,b]$  \cite[Section 17]{Naimark}. In this case both $u_\lambda , v_\lambda$ belong to $L^2 (I)$ for all $\lambda \in \mathbb{C} $ (when $\lambda \in \mathbb{R}$ this is not obvious, but still true \cite[Theorem 4, Section 19.4]{Naimark}) and it follows that for any $z \in \mathbb{C} $,
$$\mbox{Ker} (H(p,q,I) ^* - z I ) = \bigvee \{ u_z, v_z \},$$  and so
we can define $\gamma _1 (z) = u_z$, $\gamma _2 (z) = v_z$, and then for each $z \in \mathbb{C} \setminus \mathbb{R}$, if $\{ e_i \} _{i=1} ^2$ is the standard orthonormal basis of $\mathbb{C} ^2$,
$\Gamma(z) : \mathbb{C} ^2 \rightarrow \mbox{Ker} (H (p,q , I) ^* - \overline{z} I )$ defined by
 \[ \Gamma (z) := \gamma_1 (\overline{z} ) \otimes e_1 + \gamma _2 (\overline{z} ) \otimes e_2, \] is a valid choice of model for $H(p,q,I)$.   Moreover, it follows from equation
(\ref{equation:kernel}) that $\mathcal{H} (\Gamma)$ has reproducing kernel: $$ K ^I _\lambda (z) = \left( \begin{array}{cc}  \int _I  u_{\overline{\lambda}} (x)  \overline{ u _{\overline{z}} (x) } dx    &
\int _I u _{\overline{\lambda}} (x)  \overline{ v_{\overline{z}} (x)} dx  \\  \int _I  v_{\overline{\lambda}} (x) \overline{ u _{\overline{z}} (x) } dx  &
\int _I  v _{\overline{\lambda}} (x)   \overline{ v_{\overline{z}}  (x) } dx   \end{array}  \right). $$

For a concrete example consider $p=1$ and $q(t) = V(t) = \frac{1}{2t^2}$. Then $$H _V  := H (1, V , [0,b] )$$ is a symmetric operator acting on its appropriate domain in $L^2 [0, b]$. We will consider the two cases where (i) $b < \infty$ and (ii) $b = \infty$. For the first case, as discussed above, the theory of \cite{Naimark} implies that $H_V $ has deficiency indices $(2,2)$ and the above calculations apply. We will, however, compute the
deficiency indices and subspaces directly for this example, and obtain a model for $H_V $ with explicit formulas in terms of Hankel and Bessel functions.
Choosing $\lambda \in \mathbb{C} \setminus \mathbb{R}$, two linearly independent solutions to the differential equation \[ -f'' (t)  +\frac{1}{2t^2} f(t) = \lambda f(t) \] are $$u _\lambda (t) = \sqrt{t} H _{\sqrt{3}/4} ^{(1)} (\sqrt{\lambda} t )$$ and
$$v_\lambda (t) =  \sqrt{t} H _{\sqrt{3}/4} ^{(2)} (\sqrt{\lambda} t ),$$ where the $H ^{(i)}, i = 1, 2,$ are Hankel functions of the first and second kind \cite{MR0167642}, and $\sqrt{\lambda}$ is chosen to be such that $ 0 < \mbox{arg} (\sqrt{\lambda}) < \frac{\pi}{2}$ when $\lambda \in \mathbb{C} _+$ and $ \pi < \mbox{arg} (\sqrt{\lambda}) < \frac{5\pi}{2}$ when $\lambda \in \mathbb{C} _-$.

One can verify \cite{MR1349110} that the Bessel-$J$ and Bessel-$Y$ functions behave asymptotically as
$$ J_\nu (t) \sim \frac{1}{\Gamma (\nu +1)} \left( \frac{t}{2} \right) ^\nu  \ \ \ \
Y_\nu (t) \sim \frac{-\Gamma (\nu )}{\pi } \left( \frac{2}{t} \right) ^\nu, $$ for small $t$. Here $\Gamma$ denotes the Euler gamma function. Since the Hankel functions satisfy $H_\nu ^{(j)} = J_\nu + (-1) ^{j +1} i Y _\nu$, it follows that in the case (i) where $b< \infty$, both solutions $v_\lambda , u_\lambda$ belong to $L^2 [0, b]$, and hence they both belong to $\mbox{Ker} (H_V ^* - \lambda I )$. We conclude that the deficiency indices of $H_V$ are $(2,2)$. Both solutions $u_\lambda$ and
$v_\lambda$ are analytic as functions of $\lambda$ in $\mathbb{C} \setminus \mathbb{R}$, and so we can choose $\gamma _1 (\lambda) = u_{\lambda}$ and $\gamma _2 (\lambda) = v_{\lambda}$. Then if $\{ e_k \}_{k = 1}^{2}$ denotes the standard
basis for $\mathbb{C} ^2$,
$$
 \Gamma (\lambda ) = \gamma _1 (\overline{\lambda}) \otimes e_1 + \gamma _2 (\overline{\lambda}) \otimes e_2, $$ is a valid choice of model for $H_V$.

By equation (\ref{equation:kernel}), the reproducing kernel for $\mathcal{H} (\Gamma )$ is
$$ K_\lambda (z) = \left( \begin{array}{cc} \int _0 ^b t H_{\frac{\sqrt{3}}{4}} ^{(1)} \left( (\overline{\lambda} ) ^{\frac{1}{2}} t \right) \overline{ H_{\frac{\sqrt{3}}{4}} ^{(1)} \left( (\overline{z} ) ^{\frac{1}{2}} t \right)} dt   & \int _0 ^b t
 H_{\frac{\sqrt{3}}{4}} ^{(1)} \left( (\overline{\lambda} ) ^{\frac{1}{2}} t \right) \overline{ H_{\frac{\sqrt{3}}{4}} ^{(2)} \left( (\overline{z} ) ^{\frac{1}{2}} t \right)} dt \\
 \int _0 ^b t
 H_{\frac{\sqrt{3}}{4}} ^{(2)} \left( (\overline{\lambda} ) ^{\frac{1}{2}} t \right) \overline{ H_{\frac{\sqrt{3}}{4}} ^{(1)} \left( ( \overline{z} ) ^{\frac{1}{2}} t \right)} dt & \int _0 ^b t
 H_{\frac{\sqrt{3}}{4}} ^{(2)} \left( (\overline{\lambda} ) ^{\frac{1}{2}} t \right) \overline{ H_{\frac{\sqrt{3}}{4}} ^{(2)} \left( (\overline{z} ) ^{\frac{1}{2}} t \right) } dt \end{array}\right).
$$

Now consider the second case where $b = \infty$. The Hankel functions have the asymptotic behavior
$$ H_\nu ^{(j)} (\sqrt{\lambda t} ) \sim \sqrt{\frac{2}{\pi \sqrt{\lambda} t }} e^{(-1)^{j+1} i \left( \sqrt{\lambda t} - \frac{\pi \nu}{2} - \frac{\pi}{4} \right)}$$
as $t \rightarrow \infty$ \cite{MR0167642}. It follows that for any $\lambda \in \mathbb{C} \setminus \mathbb{R}$, one of the solutions $u_\lambda , v_\lambda$ is square integrable on $[0 , \infty)$ and one is not. More precisely,
$u_\lambda $ is square integrable when $\lambda \in \mathbb{C} _+$ while $v_\lambda $ is square integrable if $\lambda \in \mathbb{C} _-$.  This shows
that in this case $H_V$ has deficiency indices $(1,1)$, and so if we define $$ \gamma (\lambda ) = \left\{ \begin{array}{cc} u_\lambda & \lambda \in \mathbb{C} _+ \\
v_\lambda & \lambda \in \mathbb{C} _-  \end{array} \right. , $$ and $\Gamma (\lambda ) := \gamma (\overline{\lambda} ) \otimes e_1$, then $\Gamma$ is a model for $H_V$, and the reproducing kernel for $\mathcal{H} (\Gamma)$ is
$$ K _\lambda (z) = \left\{ \begin{array}{cc} \int _0 ^\infty t H_{\frac{\sqrt{3}}{4}} ^{(1)} \left( (\overline{\lambda} ) ^{\frac{1}{2}} t \right) \overline{ H_{\frac{\sqrt{3}}{4}} ^{(1)} \left( (\overline{z} ) ^{\frac{1}{2}} t \right)} dt & \lambda, z  \in \mathbb{C} _+ \\
\int _0 ^\infty t H_{\frac{\sqrt{3}}{4}} ^{(1)} \left( (\overline{\lambda} ) ^{\frac{1}{2}} t \right) \overline{ H_{\frac{\sqrt{3}}{4}} ^{(2)} \left( (\overline{z} ) ^{\frac{1}{2}} t \right)} dt & \lambda \in \mathbb{C} _+ \ \  z \in \mathbb{C} _- \\
\int _0 ^\infty t H_{\frac{\sqrt{3}}{4}} ^{(2)} \left( (\overline{\lambda} ) ^{\frac{1}{2}} t \right) \overline{ H_{\frac{\sqrt{3}}{4}} ^{(1)} \left((\overline{z} ) ^{\frac{1}{2}} t \right)} dt & \lambda \in \mathbb{C} _- \ \   z \in \mathbb{C} _+ \\
\int _0 ^\infty t H_{\frac{\sqrt{3}}{4}} ^{(2)} \left( (\overline{\lambda} ) ^{\frac{1}{2}} t \right) \overline{ H_{\frac{\sqrt{3}}{4}} ^{(2)} \left((\overline{z} ) ^{\frac{1}{2}} t \right)} dt & \lambda, z \in \mathbb{C} _-  \\ \end{array} \right.$$

\subsection{Unbounded Toeplitz operators} \label{Toeplitz-ex}
Let $H^2$ denote the classical Hardy space of the open unit disk $\mathbb{D} := \{|z| < 1\}$ with inner product
\begin{equation} \label{H2-inner}
\langle f, g \rangle := \frac{1}{2 \pi} \int_{0}^{2 \pi} f(e^{i \theta}) \overline{g(e^{i \theta})} d \theta.
\end{equation}
Note how we equate, as is customary, a Hardy space function (analytic on $\mathbb{D}$) with its almost everywhere define $L^2(\partial \mathbb{D})$ radial boundary values on the unit circle $\partial \mathbb{D}$. Note that $H^2$ is a reproducing kernel Hilbert space with Cauchy kernel
 $$k_z(\zeta) = \frac{1}{1 - \overline{z} \zeta},$$
  i.e.,
$$f(z) = \langle f, k_{z} \rangle_{H^2}, \quad \forall f \in H^2, z \in \mathbb{D}.$$

Let $H^{\infty}$ denote the bounded analytic functions on $\mathbb{D}$ and
$N^{+}$ denote the \emph{Smirnov functions}, i.e., the algebra of analytic functions $f$ on $\mathbb{D}$ which can be written as $f = g/h$, where $g, h \in H^{\infty}$ and $h$ is an outer function. We refer the reader to the well-known texts \cite{Duren, Garnett} for a reference on all of this.

By a result of Sarason \cite{Sarason}, one can write each $g \in N^{+}$ as
\begin{equation} \label{gisba}
g = \frac{b}{a}; \quad a, b \in H^{\infty}, a(0) > 0, \ \mbox{$a$ outer}, \  |a(e^{i \theta})|^2 + |b(e^{i \theta})|^2 = 1 \; \; \mbox{a.e.-$\theta$}.
\end{equation}
  If $g$ is a rational function then so are $a$ and $b$. Since $a$ is an outer function, the set $a H^2$ is dense in $H^2$ and one can define a Toeplitz operator $T_g$ on $\mathscr{D}(T_g) = a H^2$ by
  $$T_{g} f = g f.$$
  In \cite{Sarason} it is shown that $T_g$ is a densely defined closed operator.
  If $g$ is also real-valued almost everywhere on $\partial \mathbb{D}$, then, by the definition of the inner product on $H^2$ from \eqref{H2-inner}, $T_{g}$ is a closed symmetric operator.

Let $N^{+}_{\R}$ denote the Smirnov functions which are real valued on the unit circle. Helson \cite{Helson, Helson2} shows that $g \in N^{+}_{\R}$ if and only if there are inner functions $p, q$ with $p - q$ outer such that
\begin{equation} \label{g-Helson}
g = i \frac{q + p}{q - p}.
\end{equation}
The inner functions $p$ and $q$ in the above representation are unique up to constant factors.
Furthermore, the deficiency  indices for $T_{g}$, $g \in N^{+}_{\R}$, are finite if and only if $g$ is a rational function and Helson was able to construct examples of $g$ for which the defect indices are any pair $(m, n)$, $m, n \in \mathbb{N} \cup\{\infty\}$.  Indeed, let $\nu$ be an real signed atomic measure on $\partial \mathbb{D}$ with $m$ point masses which are positive and $n$ which are negative. The function
$$g(z) := i \int \frac{\zeta + z}{\zeta - z} d \nu(\zeta)$$
belongs to $N^{+}_{\R}$ and the deficiency indices of $T_{g}$ are $(m, n)$.

If $g$ takes the form \eqref{g-Helson} then
$$g - i = p \frac{2 i}{q - p}, \quad g + i = q \frac{2 i}{q - p}$$
and so $p$ is the inner factor of $g - i$ while $q$ is the inner factor of $g + i$. Thus
$$\mbox{Rng}(T_{g} - i I)^{\perp} = (p H^2)^{\perp}, \quad \mbox{Rng}(T_{g} + i I)^{\perp} = (q H^2)^{\perp}.$$
It is well-known that these spaces have dimension equal to the order of the inner functions $p$ and $q$. Hence,
$T_{g} \in \mathcal{S}_{n}(H^2)$ when $g \in N^{+}_{\R}$ and $p$ and $q$ are inner functions of equal order.

\begin{Remark}
This brings up the interesting question as to when, for two inner functions $p$ and $q$ of equal order, the difference $p - q$ is outer. Certainly when $p$ and $q$ are Blaschke products of order $n$ the condition $p - q$ is outer implies that $p$ and $q$ do not share any zeros. One might be tempted to believe the converse. Unfortunately this is not true. Take two different $n$-th roots on unity $\zeta, \xi$ and $0 < a < 1$. The inner functions
$$p(z) = \left(\frac{z  - a \zeta}{1 - \overline{a \zeta} z}\right)^n, \quad q(z) = \left(\frac{z  - a \xi}{1 - \overline{a \xi} z}\right)^n$$ have distinct zeros $a \zeta \not = a \xi$. But $p(0) - q(0) = 0$ and so $p - q$ is not outer.
\end{Remark}

From the well-known identity $$T_{g}^{*} k_{z} = \overline{g(z)} k_{z},$$  we see that if  $T_{g} \in \mathcal{S}_{1}(H^2)$ then $g$ must be univalent. Moreover, we have
$$\mbox{Ker} (T_{g}^{*} - \lambda I) = \mathbb{C} k_{g^{-1}(\overline{\lambda})}.$$
Thus, as in our previous examples,
$$\gamma(\lambda) = \frac{1}{1 - g^{-1}(\overline{\lambda}) z}, \quad \Gamma(\lambda) = \gamma(\overline{\lambda}) \otimes 1, \quad K_{\lambda}(z) = \frac{1}{1 - \overline{g^{-1}(\lambda)} g^{-1}(z)}.$$
In this case the corresponding $H^{2}(\Gamma)$ space is the set of functions of the form
$$\widehat{f}(\lambda) = \langle f, k_{g^{-1}(\lambda)} \rangle = f(g^{-1}(\lambda)), \quad f \in H^2(\mathbb{D}),$$ and so $H^2(\Gamma)$ is the Hardy space $H^{2}(g(\mathbb{D}))$, where, since $g$ is real on the circle, will be the Hardy space of a certain slit domain. The norming point of $H^2(g(\mathbb{D}))$ will be $g(0)$. See \cite{A-F-R} for more on Hardy spaces of a slit domains.

For a vector-valued example, consider the Toeplitz operator $T_{g^2}$, where $g \in N^{+}_{\R}$ and $g$ is univalent. Then
$$\mbox{Ker} (T^{*}_{g^2} - \lambda I) = \bigvee \left\{k_{g^{-1}((\overline{\lambda} ) ^{\frac{1}{2}})}, k_{g^{-1}(-(\overline{\lambda} ) ^{\frac{1}{2}})}\right\}.$$
Here
$$\gamma_1(\lambda) :=  \frac{1}{1 -  g^{-1}((\overline{\lambda} ) ^{\frac{1}{2}}) z}, \quad \gamma_2(\lambda) := \frac{1}{1 -  g^{-1}(-(\overline{\lambda} ) ^{\frac{1}{2}}) z}$$
and if $e_1 = (1, 0), e_2 = (0, 1)$ are the standard basis vectors for $\C^2$, then
$$\Gamma(\lambda) = \gamma_{1}(\overline{\lambda}) \otimes e_1 + \gamma_{2}(\overline{\lambda}) \otimes e_2.$$
In this case the kernel function turns out to be
$$K_{\lambda}(z) =
\left(
\begin{array}{cc}
  \frac{1}{1 -  g^{-1}((\overline{\lambda} ) ^{\frac{1}{2}}) g^{-1}(\sqrt{z})}   &  \frac{1}{1 -  g^{-1}((\overline{\lambda} ) ^{\frac{1}{2}}) g^{-1}(-\sqrt{z})}  \\
   \frac{1}{1 -  g^{-1}(-(\overline{\lambda} ) ^{\frac{1}{2}}) g^{-1}(\sqrt{z})} &    \frac{1}{1 -  g^{-1}(\sqrt{-\overline{\lambda}}) g^{-1}(-\sqrt{z})}\\
\end{array}
\right),$$
and the corresponding $H^2(\Gamma)$ space is the set of functions of the form
$$\widehat{f}(\lambda) = (f(g^{-1}(\sqrt{\lambda})), f(g^{-1}(-\sqrt{\lambda}))), \quad f \in H^2.$$

\subsection{Multiplication by the independent variable on a Lebesgue space}
Suppose $\mu$ is a positive Borel measure on $\R$ such that
$$\mu(\R) = \infty \quad \mbox{and} \quad \int \frac{1}{1 + x^2} d \mu(x) < \infty.$$
It is well-known that the operator $(M^{\mu}f)(x) = x f(x)$, defined densely on
$$\mathscr{D}(M^{\mu}) := \{f \in L^2(\mu) : x f \in L^2(\mu)\},$$ is self adjoint.  Consider the operator $M_{\mu}$ defined as $M^{\mu}$ restricted to
$$\mathscr{D}(M_{\mu}) := \left\{f \in L^2(\mu): x f \in L^2(\mu), \int f d\mu = 0\right\}.$$
Notice the difference between $M^{\mu}$, which is self-adjoint, and $M_{\mu}$, which is symmetric but not self-adjoint.

\begin{Proposition} \label{Mmu}
For a positive Borel measure $\mu$ on $\R$ with $\int \frac{1}{1 + x^2} d \mu(x) < \infty$, the operator $M_{\mu}$ is densely defined if and only if $\mu(\R) = \infty$.
\end{Proposition}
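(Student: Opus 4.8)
The plan is to realize $\mathscr{D}(M_\mu)$ as the kernel of the linear functional $L(f):=\int f\,d\mu$ on the domain $\mathscr{D}(M^\mu)=\{f\in L^2(\mu):xf\in L^2(\mu)\}$ of the self-adjoint operator, and then to decide density of this kernel according to whether $L$ is bounded on $L^2(\mu)$. The first point to settle is that $L$ makes sense on $\mathscr{D}(M^\mu)$ at all, i.e.\ that $f,xf\in L^2(\mu)$ forces $f\in L^1(\mu)$; this is where the standing hypothesis $\int\frac{d\mu}{1+x^2}<\infty$ enters. Writing $\int|f|\,d\mu=\int_{\{|x|\le1\}}|f|\,d\mu+\int_{\{|x|>1\}}\frac{|xf|}{|x|}\,d\mu$ and applying Cauchy--Schwarz bounds this by $\|f\|_{L^2(\mu)}\,\mu(\{|x|\le1\})^{1/2}+\|xf\|_{L^2(\mu)}\big(\int_{\{|x|>1\}}x^{-2}\,d\mu\big)^{1/2}$, and both factors are finite because $\mu(\{|x|\le1\})\le2\int\frac{d\mu}{1+x^2}$ and $x^{-2}\le\frac{2}{1+x^2}$ on $\{|x|>1\}$. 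Hence $\mathscr{D}(M_\mu)=\{f\in\mathscr{D}(M^\mu):L(f)=0\}$.

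To prove that $\mu(\R)=\infty$ is \emph{necessary}, I argue by contraposition (assuming, as throughout, $\mu\neq0$). If $\mu(\R)<\infty$ then the constant function $1$ belongs to $L^2(\mu)$ and is nonzero, and $L(f)=\langle f,1\rangle_{L^2(\mu)}$ for every $f\in\mathscr{D}(M^\mu)$; thus $\mathscr{D}(M_\mu)\subseteq\{1\}^{\perp}$, a proper closed subspace of $L^2(\mu)$, and $M_\mu$ is not densely defined.

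For \emph{sufficiency}, suppose $\mu(\R)=\infty$. First, $\mathscr{D}(M^\mu)$ is dense in $L^2(\mu)$: for $g\in L^2(\mu)$ the truncations $g\chi_{\{|x|\le N\}}$ lie in $\mathscr{D}(M^\mu)$ and converge to $g$ in $L^2(\mu)$ by dominated convergence. So it suffices to approximate an arbitrary $g\in\mathscr{D}(M^\mu)$ by elements of $\ker L$. Set $A_k:=\{|x|\le k\}$; each $\mu(A_k)$ is finite (again since $\int\frac{d\mu}{1+x^2}<\infty$) while $\mu(A_k)\uparrow\mu(\R)=\infty$. With $c:=L(g)$, for $k$ large enough that $\mu(A_k)>0$ put $\phi_k:=\frac{c}{\mu(A_k)}\chi_{A_k}$ and $f_k:=g-\phi_k$. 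Then $\phi_k\in\mathscr{D}(M^\mu)$ (bounded support, finite measure), $L(\phi_k)=c$, so $L(f_k)=0$ and $f_k\in\mathscr{D}(M_\mu)$, while $\|f_k-g\|_{L^2(\mu)}^2=\|\phi_k\|_{L^2(\mu)}^2=|c|^2/\mu(A_k)\to0$ as $k\to\infty$. Therefore $\mathscr{D}(M_\mu)$ is dense in $L^2(\mu)$.

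The only genuinely subtle step is the preliminary one, and it is really needed: the integrability hypothesis on $\mu$ is exactly what makes the constraint $\int f\,d\mu=0$ meaningful on $\mathscr{D}(M^\mu)$. Granted that, everything reduces to whether the functional $L$ is bounded on $L^2(\mu)$ --- which happens precisely when $\mu(\R)<\infty$ --- and, in the unbounded case, to the existence of the ``mass-carrying'' functions $\phi_k$ with prescribed integral and $\|\phi_k\|_{L^2(\mu)}\to0$, which is where the hypothesis $\mu(\R)=\infty$ does its work.
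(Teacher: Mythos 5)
Your proof is correct, and its overall architecture matches the paper's: you interpret $\mathscr{D}(M_{\mu})$ as the kernel of the functional $L(f)=\int f\,d\mu$ on $\{f\in L^2(\mu):xf\in L^2(\mu)\}$, you justify that $L$ is even defined there via the same Cauchy--Schwarz estimate driven by $\int\frac{d\mu}{1+x^2}<\infty$ (the paper weights by $1+|t|$; you split over $\{|x|\le 1\}$ and $\{|x|>1\}$ --- same computation), and the necessity direction (constant $1\in L^2(\mu)$, domain contained in $\{1\}^{\perp}$) is identical. Where you genuinely diverge is the sufficiency step. The paper routes it through an abstract functional-analytic lemma (its Lemma \ref{TLmu}): given unit vectors $f_n$ in the dense manifold with $\ell(f_n)\to\infty$, any bounded functional annihilating $\ker\ell$ annihilates $f-\frac{\ell(f)}{\ell(f_n)}f_n$ and hence, in the limit, all of the manifold, so by Hahn--Banach the kernel is dense; the paper then instantiates $f_n$ as normalized smooth bump functions. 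You instead argue directly in norm: you first check $\mathscr{D}(M^{\mu})$ is dense via truncations, then correct each $g$ by $\phi_k=\frac{L(g)}{\mu(A_k)}\chi_{A_k}$ with $A_k=\{|x|\le k\}$, noting $\|\phi_k\|^2=|L(g)|^2/\mu(A_k)\to 0$ precisely because $\mu(\R)=\infty$. The two arguments exploit the same phenomenon --- your $\chi_{A_k}/\sqrt{\mu(A_k)}$ are exactly unit vectors on which $L$ blows up, i.e.\ admissible $f_n$ for the paper's lemma, and the paper's subtraction $f-\frac{\ell(f)}{\ell(f_n)}f_n$ is your corrector in disguise --- but your version is more elementary (no duality or Hahn--Banach needed), produces explicit approximants with an explicit rate, and quietly records the needed fact that $\mu(A_k)<\infty$ for each $k$. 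The only caveat, which you yourself flag, is the tacit assumption $\mu\neq 0$ in the necessity direction; the paper makes the same tacit assumption, and the case $\mu=0$ is degenerate, so this is not a gap.
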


We start with the following technical lemma from functional analysis.

 \begin{Lemma} \label{TLmu}
Suppose $B$ is a dense linear manifold in $L^2(\mu)$ and $\ell$ is a linear functional defined on $B$ such that there exists a sequence of unit vectors $\{f_n\}_{n \geqslant 1}$ in $B$ such that $\ell(f_n) \not = 0$ and $\ell(f_n) \to \infty$ as $n \to \infty$. Then the linear manifold
$$\{f \in B: \ell(f) = 0\}$$
is dense in $L^2(\mu)$.
\end{Lemma}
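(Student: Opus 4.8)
The plan is to exploit the fact that $\ell$ is, in particular, an unbounded linear functional on the dense manifold $B$. First I would pick an arbitrary $g \in L^2(\mu)$; the goal is to approximate $g$ by elements of the kernel of $\ell$. Since $B$ is dense, fix $h \in B$ with $\|g - h\|$ small, say less than $\varepsilon$. If $\ell(h) = 0$ we are done, so assume $\ell(h) \neq 0$. The idea is to subtract off a small multiple of one of the unit vectors $f_n$ to kill the value $\ell(h)$, while changing $h$ by only a small amount in norm.

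Concretely, for each $n$ set
\[
h_n := h - \frac{\ell(h)}{\ell(f_n)}\, f_n.
\]
Then $h_n \in B$ (as $B$ is a linear manifold), and $\ell(h_n) = \ell(h) - \frac{\ell(h)}{\ell(f_n)}\ell(f_n) = 0$, so each $h_n$ lies in the manifold $\{f \in B : \ell(f) = 0\}$. Moreover, since $\|f_n\| = 1$,
\[
\|h - h_n\| = \left| \frac{\ell(h)}{\ell(f_n)} \right| \, \|f_n\| = \frac{|\ell(h)|}{|\ell(f_n)|} \longrightarrow 0
\]
as $n \to \infty$, because $\ell(h)$ is a fixed finite number and $|\ell(f_n)| \to \infty$ by hypothesis. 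Hence for $n$ large we get $\|g - h_n\| \leqslant \|g - h\| + \|h - h_n\| < 2\varepsilon$, and $h_n$ is in the kernel manifold. Since $g \in L^2(\mu)$ and $\varepsilon > 0$ were arbitrary, the manifold $\{f \in B : \ell(f) = 0\}$ is dense in $L^2(\mu)$.

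There is essentially no obstacle here: the only thing to be slightly careful about is that $\ell(f_n) \neq 0$ for all $n$ (so the division is legitimate), which is explicitly part of the hypothesis, and that $\ell$ being linear is what makes $h_n \in B$ and the computation $\ell(h_n) = 0$ go through. The statement that $\ell(f_n) \to \infty$ is stronger than needed — $|\ell(f_n)| \to \infty$, or even just $\sup_n |\ell(f_n)| = \infty$ after passing to a subsequence, suffices — but we simply use it as given.
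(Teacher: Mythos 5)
Your proof is correct and rests on exactly the same key device as the paper's: the corrected vector $h - \frac{\ell(h)}{\ell(f_n)}\, f_n$, which lies in the kernel of $\ell$ and differs from $h$ by a vector of norm $|\ell(h)|/|\ell(f_n)| \to 0$. The only difference is presentational — you run a direct two-step $\varepsilon$-approximation (density of $B$ plus the correction), whereas the paper dualizes, applying a bounded functional that annihilates the kernel to the same corrected vector and invoking Hahn--Banach; your direct version is, if anything, slightly cleaner.
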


\begin{proof}
Suppose $\lambda$ is a \emph{bounded} linear functional on $L^2(\mu)$ with $$\lambda(\{f: \ell(f) = 0\}) = 0.$$ Then
$$\lambda\left(f  - \frac{\ell(f)}{\ell(f_n)} f_n \right) = 0 \quad \forall f \in B.$$ As $n \to \infty$ we use the fact that $\|f_n\| = 1$ and $\ell(f_n) \to \infty$ to see that $\lambda(f) = 0$ for all $B$ and so, by and Hahn-Banach theorem, $\lambda \equiv 0$.
\end{proof}

\begin{proof}[Proof of Proposition \ref{Mmu}]Let us show that
$$\mathscr{D}(M_{\mu}) = \left \{f \in L^2(\mu): t f \in L^2(\mu), \int f(t) d \mu(t) = 0\right\}
$$
 is dense in $L^2(\mu)$ if and only if $\mu(\R) = \infty$. Suppose that $\mu(\R) < \infty$. Then $\C \subset L^2(\mu)$ and $\mathscr{D}(M_{\mu}) \subset \C^{\perp}$ and so $\mathscr{D}(M_{\mu})$ is not dense in $L^2(\mu)$.
 Now suppose that $\mu(\R) = \infty$. If $f \in L^2(\mu)$ and $t f \in L^2(\mu)$ then
 \begin{align*}
 \int |f(t)|  d\mu(t) & = \int |f(t)| (1 + |t|) \frac{1}{1 + |t|} d \mu(t)\\
 &  \leqslant \left( \int |f(t)|^2 (1 + t^2) d \mu(t)\right)^{1/2} \left(\int \frac{1}{1 + t^2} d \mu(t)\right)^{1/2}\\
 & < \infty.
 \end{align*}
 Thus the linear functional
 $$f \mapsto \int f(t) d \mu(t)$$ is defined on $\{f \in L^2(\mu): t f \in L^2(\mu)\}$. This last set is dense in $L^2(\mu)$ since it contains the smooth functions with compact support. But since $\mu(\R) = \infty$, this linear functional satisfies the hypothesis of Lemma \ref{TLmu} (Indeed just take $f_{n} \in C_{0}^{\infty}(\R)$, $f_n \geq 0$ with $f_{n} = 1$ on $[-N, N]$ in Lemma \ref{TLmu}) and so $\mathscr{D}(M_{\mu})$ is dense in $L^2(\mu)$.
\end{proof}

For $g \in \mathscr{D}(M_{\mu})$ observe that
$$0 = \int g d\mu = \int g(x) (x - \lambda) \overline{\frac{1}{x - \overline{\lambda}}} d \mu(x)$$
and so $\frac{1}{x - \overline{\lambda}} \in \mbox{Rng}(M_{\mu} - \lambda I)^{\perp}$. A little exercise will show that indeed
$$\mbox{Rng}(M_{\mu} - \lambda I)^{\perp} = \C \frac{1}{x - \overline{\lambda}}$$
and thus $M_{\mu} \in \mathcal{S}_{1}(L^2(\mu))$.
Notice that
$$\gamma(\lambda) = \frac{1}{x - \overline{\lambda}}, \quad \Gamma(\lambda) = \gamma(\overline{\lambda})  \otimes 1$$ and thus
$$K_{\lambda}(z) = \int \frac{1}{(x - z)(x - \overline{\lambda})} d \mu(x)$$ and
$$\widehat{f}(\lambda) = \int \frac{f(x)}{x - \lambda} d \mu(x), \quad f \in L^2(\mu).$$
Thus $M_{\mu}$ is unitarily equivalent to $M_{\Gamma}$ (multiplication by the independent variable) on the space of Cauchy transforms of $L^2(\mu)$ functions.

We point out that there is a version of all this when $\mu$ is a positive matrix-valued measure on $\R$ which will be explored later on.

\subsection{Multiplication by the independent variable on a deBranges-Rovnyak space}  Let $H^2 _\mathcal{K}$ denote the Hardy space of analytic $\mathcal{K}-$valued functions on the upper half-plane $\mathbb{C} _+$. These are the analytic functions $f: \C_{+} \to \mathcal{K}$ such that
$$\sup_{y  > 0} \int_{-\infty}^{\infty} \|f(x + i y)\|_{\mathcal{K}}^{2} dx < \infty.$$
It is well known that these functions have non-tangential boundary values almost everywhere on $\R$ such that
$$\int_{-\infty}^{\infty} \|f(x)\|_{\mathcal{K}}^2 dx < \infty.$$
The above quantity determines an inner product $\langle \cdot , \cdot \rangle$ on $H^2_{\mathcal{K}}$. Given any contractive analytic $B (\mathcal{K} )-$valued function $\Theta$, one can define the de Branges-Rovnyak space $\mathscr{K}(\Theta)$ as the range of the operator
\begin{equation}\label{Rtheta}
R_\Theta := (I - T _\Theta T_\Theta^{*})^{1/2},
\end{equation}
where the inner product $ \langle \cdot , \cdot \rangle _\Theta$ on $\mathscr{K}(\Theta)$ is defined so that $R_\Theta$ acts as a co-isometry of $H^2 _\mathcal{K} $ onto $\mathscr{K}(\Theta)$.  In other words, if at least one of $f , g \in H^2 _\mathcal{K}$ are orthogonal to $\mbox{Ker} (R _\Theta )$ then
$$\langle R_\Theta f , R_\Theta g \rangle _\Theta = \langle f , g \rangle.$$
 In \eqref{Rtheta}, $$T_\Theta := P _{H^2 _\mathcal{K}} M _\Theta |H^2 _\mathcal{K}$$
is a Toeplitz operator, $M_\Theta$ is multiplication by $\Theta$ acting on $L^2 _\mathcal{K}$-the Hilbert space of $\mathcal{K}$-valued functions which are square integrable with respect to Lebesgue measure on $\mathbb{R}$, and $P_{H^2 _\mathcal{K}}$ is the orthogonal projection from $L^2 _\mathcal{K}$ onto $H^2 _\mathcal{K}$.
Note that $T_{\Theta}$ is a contraction and so $R_{\Theta}$ makes sense (and is a contraction). Note also that when
$\Theta$ is inner, i.e., $\Theta(x)$ is unitary for almost every $x \in \R$, then the deBranges-Rovnyak space
$\mathscr{K}(\Theta)$ becomes the classical model space $H^{2}_{\mathcal{K}} \ominus \Theta H^2_{\mathcal{K}}$ in the upper half plane \cite{Nik, Nik2, Niktr}.

If $\{ e_k \}_{k = 1}^{n}$ is an orthonormal basis for $\mathcal{K}$, then it follows that finite linear combinations of the  reproducing kernel vectors $$\delta _w ^{(j)} (z) := \frac{i}{2\pi}  \frac{1}{z-\overline{w}} e_j, \quad 1 \leqslant j \leqslant n, $$
 form a dense set in $H^2 _\mathcal{K}$. Since
$\mathscr{K}(\Theta)$ is contractively contained in $H^2 _\mathcal{K}$ (this follows because the operator $R_\Theta$ is a contraction), it follows that given any $1 \leqslant j \leqslant n$ and $z \in \mathbb{C} _+$, the point evaluation linear
functional $l ^{(j)} _z $ defined by \[ l ^{(j)} _z (f) = \langle f(z) , e_j \rangle _{\mathcal{K}} ,\] is well-defined and bounded on $\mathscr{K}(\Theta)$. From the Riesz representation theorem, there is a
point evaluation or reproducing kernel vector $\sigma ^{(j)} _z \in \mathscr{K}(\Theta)$ such that for any $h \in \mathscr{K}(\Theta)$, \[ \langle h(z) , e_j \rangle _{\mathcal{K}} =  l _z ^{(j)} (h) = \langle h , \sigma ^{(j)} _z \rangle _\Theta . \]
To compute $\sigma ^{(j)} _z$, consider the fact that if  $h = R_\Theta f \in \mathscr{K}(\Theta)$ for some $f \in H^2 _\mathcal{K}$, then
\begin{eqnarray}   \langle h(z) , e_j \rangle _{\mathcal{K}} & =  & \langle h , \delta ^{(j)} _z \rangle  = \langle R_\Theta f , \delta ^{(k)} _z \rangle \nonumber \\
& = & \langle f , R_\Theta ^* \delta _z ^{(j)} \rangle = \langle h , R_\Theta R_\Theta ^* \delta _z ^{(j)} \rangle _\Theta.\end{eqnarray}
This computation shows that \[ \sigma _z ^{(j)} = R_\Theta R_\Theta ^* \delta _z ^{(j)} = (I - T_\Theta T_{\Theta } ^* ) \delta _z ^{(j)}.\]
An easy calculation shows $T_\Theta ^* \delta _z ^{(j)} = \Theta (z) ^* \delta _z ^{(j)} $, and so it follows that
\[ \sigma _\lambda ^{(j)} (z) = \frac{i}{2\pi} \frac{ I   - \Theta (z) \Theta ^* (\lambda)}{z-\overline{\lambda}} e_j, \] and hence the reproducing kernel operator on $\mathscr{K}(\Theta)$ is
\begin{equation}  \label{dbrkernel}
\Delta  _w ^\Theta (z) := \frac{i}{2\pi} \frac{I - \Theta(z) \Theta (w) ^*}{z -\overline{w} }; \quad w,z \in \mathbb{C} _+ .
\end{equation}
Moreover, finite liner combinations of
$$\sigma _w ^{(j)} = \Delta _w ^\Theta e_j, \quad w \in \mathbb{C} _+, 1 \leqslant j \leqslant n,$$ are dense in $\mathscr{K}(\Theta)$.

Let $H ^\infty _\mathcal{K}$ be the Banach space of all bounded analytic $B (\mathcal{K})$-valued functions on $\mathbb{C} _+$ (with the supremum norm), and
$$\mathscr{B}_{\mathcal{K}} := \{f \in H^{\infty}_{\mathcal{K}}: \|f\|_{\infty} \leqslant 1\}$$
be the closed unit ball in $H^{\infty}_{\mathcal{K}}$.
A function $f \in \mathscr{B}_{\mathcal{K}}$ is an \emph{extreme point} of $\mathscr{B}_{\mathcal{K}}$ if $f$ does not belong to the interior of a line segment lying in $\mathscr{B}_{\mathcal{K}}$. Equivalently, $f \in \mathscr{B}_{\mathcal{K}}$ is extreme if $f \pm g \in \mathscr{B}_{\mathcal{K}}$, where $g \in H^{\infty}_{\mathcal{K}}$, implies $g \equiv 0$.

If $\theta$ is a contractive matrix-valued analytic function on the open unit disk $\mathbb{D}$ and $\vec{x} \in \C^{n}$, we say that $\theta \vec{x}$ has an \emph{angular derivative in the sense of Carath\'{e}odory} at $\zeta \in \partial \mathbb{D}$ if $\theta \vec{x}$ has a non-tangential limit $\theta(\zeta) \vec{x}$ at $\zeta$, $\|\theta(\zeta) \vec{x}\| = \|\vec{x}\|$, and the non-tangential limit of $\theta' \vec{x}$ exists at $\zeta$. Existence of angular derivatives relates to non-tangential limits of functions in model and deBrances-Rovnyak spaces \cite{AC, Martin-uni, Poltoratskii, Sarason-dB}.

From  \cite{Martin-uni} we know the following.

\begin{Theorem} \label{Martin-Z}
If $\Theta \in \mathscr{B}_{\C^n}$ is an extreme point then $Z_{\Theta} f := z f$, defined on
$$\mathscr{D}(Z_{\Theta}) := \{f \in \mathscr{K}(\Theta): z f \in \mathscr{K}(\Theta)\},$$
is a closed symmetric operator with deficiency indices $(n, n)$. Moreover, $Z_{\Theta}$ is densely defined if and only if $(\Theta \circ b^{-1})\vec{k}$ does not have a finite angular derivative at $z = 1$ for any $\vec{k} \in \C^n$. In this case $Z_{\Theta} \in \mathcal{S}_{n}(\mathscr{K}(\Theta))$.
\end{Theorem}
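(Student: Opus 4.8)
The plan is to work directly in the de Branges--Rovnyak space $\mathscr{K}(\Theta)$, using its reproducing kernel $\Delta^\Theta_w$ from \eqref{dbrkernel}, and to extract the symmetry and the sizes of the deficiency spaces from the special structure of that kernel. Closedness of $Z_\Theta$ is immediate: if $f_m\to f$ and $zf_m\to g$ in $\mathscr{K}(\Theta)$, then, since each point evaluation $h\mapsto\langle h(w),e_j\rangle_{\mathcal K}=\langle h,\Delta^\Theta_w e_j\rangle_\Theta$ is bounded, $f_m(w)\to f(w)$ and $wf_m(w)\to g(w)$ for every $w\in\C_+$, so $g(w)=wf(w)$; hence $g=zf\in\mathscr{K}(\Theta)$, $f\in\mathscr{D}(Z_\Theta)$, and $Z_\Theta f=g$.

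The key computation is the reproducing-kernel identity: for $f\in\mathscr{D}(Z_\Theta)$, $w\in\C_+$ and $1\leqslant j\leqslant n$,
\[
\langle Z_\Theta f,\Delta^\Theta_w e_j\rangle_\Theta=\langle(zf)(w),e_j\rangle_{\mathcal K}=w\,\langle f(w),e_j\rangle_{\mathcal K}=\langle f,\bar w\,\Delta^\Theta_w e_j\rangle_\Theta ,
\]
so $\Delta^\Theta_w e_j\in\mathscr{D}(Z_\Theta^{*})$ with $Z_\Theta^{*}\Delta^\Theta_w e_j=\bar w\,\Delta^\Theta_w e_j$; equivalently $\Delta^\Theta_w e_j\perp\operatorname{Rng}(Z_\Theta-wI)$. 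Since the $\Delta^\Theta_w e_j$ ($w\in\C_+$, $1\leqslant j\leqslant n$) span a dense subspace of $\mathscr{K}(\Theta)$, \emph{simplicity} follows at once: any $x\in\bigcap_{\Im\lambda\neq0}\operatorname{Rng}(Z_\Theta-\lambda I)$ is orthogonal to every $\Delta^\Theta_w e_j$, so $x(w)=0$ for all $w\in\C_+$ and thus $x=0$. The same identity shows that for $\lambda\in\C_+$ the deficiency space $\operatorname{Rng}(Z_\Theta-\lambda I)^\perp$ contains $\bigvee_{j=1}^n\Delta^\Theta_\lambda e_j$, an $n$-dimensional subspace (here one uses that $\Theta$ is an extreme point); so the defect number is at least $n$ on the upper half-plane.

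For symmetry and the reverse bound, write
\[
\Delta^\Theta_w(z)=\frac{i}{2\pi}\cdot\frac{E(z)\,J\,E(w)^{*}}{z-\bar w},\qquad E(z):=\bigl(I\ \ \Theta(z)\bigr),\quad J:=I_{\mathcal K}\oplus(-I_{\mathcal K}),
\]
so that the numerator is Hermitian in the sense $(z-\bar w)\Delta^\Theta_w(z)=\tfrac{i}{2\pi}\bigl(I-\Theta(z)\Theta(w)^{*}\bigr)=\bigl((\bar z-w)\Delta^\Theta_z(w)\bigr)^{*}$. This is precisely the structure required to apply the de Branges--Rovnyak theory of such spaces (the vector/matrix-valued analogue of de Branges' theory of $\mathcal H(E)$ spaces and of Sarason's theory of $\mathscr{H}(b)$): from it one gets that $Z_\Theta$ is symmetric and that $\operatorname{Rng}(Z_\Theta-\lambda I)^\perp$ has dimension exactly $n$ for \emph{every} $\lambda\notin\R$, hence $Z_\Theta$ has deficiency indices $(n,n)$. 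In concrete terms, the reverse inclusion on the upper half-plane is obtained by writing a vector orthogonal to $\operatorname{Rng}(Z_\Theta-\lambda I)$ as $R_\Theta G$ and using that $T_\Theta$ intertwines with multiplication by the bounded multipliers $(z-\mu)^{-1}$ ($\mu$ in the opposite half-plane), together with the action $T_\Theta^{*}\delta_z^{(j)}=\Theta(z)^{*}\delta_z^{(j)}$ recorded above; the matching of the two defect numbers is forced by the conjugation that $\mathscr{K}(\Theta)$ inherits from the classical model-space conjugation, which interchanges the two deficiency subspaces. I expect this to be the main obstacle: the reproducing kernels of $\mathscr{K}(\Theta)$ directly furnish eigenvectors of $Z_\Theta^{*}$ only with eigenvalue in $\C_-$, so computing $\dim\operatorname{Rng}(Z_\Theta+iI)^\perp$ on the other side requires the full de Branges--Rovnyak machinery, with the rigidity coming from the extreme-point hypothesis.

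Finally, to identify when $Z_\Theta$ is densely defined, conjugate by the Cayley factor $b(z)=\tfrac{z-i}{z+i}$ from \eqref{Blaschke-b}: the induced unitary $H^2_{\mathcal K}(\C_+)\to H^2_{\mathcal K}(\mathbb D)$ carries $\mathscr{K}(\Theta)$ onto $\mathscr{K}(\Theta\circ b^{-1})$ and $Z_\Theta$ onto multiplication by $i\tfrac{1+w}{1-w}$, whose domain consists of the $h\in\mathscr{K}(\Theta\circ b^{-1})$ with $(1-w)^{-1}h\in\mathscr{K}(\Theta\circ b^{-1})$. The orthogonal complement of this domain is nonzero precisely when $\mathscr{K}(\Theta\circ b^{-1})$ admits a nonzero bounded evaluation-type functional at the boundary point $w=1$, and, by the Ahern--Clark / Fricain--Mashreghi description of boundary behaviour in de Branges--Rovnyak spaces, such a boundary kernel exists if and only if $(\Theta\circ b^{-1})\vec k$ has a finite angular derivative in the sense of Carath\'eodory at $w=1$ for some $\vec k\in\C^n$. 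Hence $Z_\Theta$ is densely defined exactly when no such $\vec k$ exists, and combining this with the previous steps yields $Z_\Theta\in\mathcal{S}_n(\mathscr{K}(\Theta))$.
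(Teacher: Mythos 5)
You should first note that the paper does not prove Theorem \ref{Martin-Z} at all: it is imported verbatim from \cite{Martin-uni} (``From \cite{Martin-uni} we know the following''), so there is no internal argument to match, and the relevant comparison is with Martin's proof, which proceeds through the compressed $n$-dimensional shift, its Cayley-transform picture and Ahern--Clark-type boundary kernels, rather than by a direct appeal to ``de Branges--Rovnyak machinery'' in the half-plane. Your sketch handles the routine parts correctly: closedness from boundedness of the point evaluations, the identity $Z_\Theta^{*}\Delta^{\Theta}_{w}e_j=\overline{w}\,\Delta^{\Theta}_{w}e_j$, and simplicity from the density of the kernel vectors. But the two statements that carry the weight of the theorem are not actually proved.

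Concretely: (i) symmetry of $Z_\Theta$ is asserted, not derived. For non-inner $\Theta$ the inner product of $\mathscr{K}(\Theta)$ is \emph{not} the $L^2(\R)$ pairing (it is the range norm of $R_\Theta$), so $\langle zf,g\rangle_\Theta=\langle f,zg\rangle_\Theta$ is exactly the nontrivial point; rewriting the kernel \eqref{dbrkernel} as $E(z)JE(w)^{*}/(z-\overline{w})$ and saying this is ``precisely the structure required'' to invoke the theory restates the goal rather than proving it. (ii) The second deficiency index is left open. As you yourself note, the kernels furnish eigenvectors of $Z_\Theta^{*}$ only for eigenvalues in $\C_-$, and the one concrete mechanism you offer for the other half-plane --- a conjugation ``inherited from the classical model-space conjugation'' --- is not available for matrix-valued $\Theta$: the antilinear map $f\mapsto\Theta f^{*}$ is a conjugation on $\mathscr{K}(\Theta)$ only in special (essentially scalar, or symbol-symmetric) situations, and indeed the present paper invokes $C_\Theta$ from \cite{Martin1} only in its scalar-valued discussion. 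So the equality of the two defect numbers, i.e. the indices $(n,n)$, is delegated entirely to unspecified machinery. (iii) The role of extremality is misplaced: linear independence of $\Delta^{\Theta}_{\lambda}e_1,\dots,\Delta^{\Theta}_{\lambda}e_n$ is a question of invertibility of $I-\Theta(\lambda)\Theta(\lambda)^{*}$, not of extremality (e.g. $\Theta=1\oplus\theta$ with $\theta$ inner is extreme, yet $\Delta^{\Theta}_{\lambda}e_1=0$); where extremality genuinely enters --- forcing the operator to be unbounded with equal indices $n$, and later $\mathscr{H}(V)_+=\mathscr{H}(V)$ --- your argument never uses it. (iv) For the density criterion, the substantive step is identifying the orthogonal complement of $\mathscr{D}(Z_\Theta)$ (after the Cayley change of variable) with a boundary reproducing kernel at $1$, and then invoking the Ahern--Clark characterization of when such a kernel exists; you state this equivalence but give no derivation. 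In short, the skeleton is reasonable, but symmetry, the $(n,n)$ index count, and the domain/angular-derivative equivalence --- the actual content of the theorem --- remain unproved in your proposal.
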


For the sake of simplicity, let us discuss, as in our previous examples, the model space for $Z_{\Theta}$ when $Z_{\Theta} \in \mathcal{S}_{1}(\mathscr{K}(\Theta))$, i.e., $\Theta$ is scalar valued. In this case the kernel functions for $\mathscr{K}(\Theta)$ are
$$\Delta^{\Theta}_{\lambda}(z) = \frac{i}{2 \pi} \frac{1 - \Theta(z) \overline{\Theta(\lambda)}}{z - \overline{\lambda}}, \quad z, \lambda \in \C_{+}.$$ Notice that
\begin{equation} \label{e-values-Z}
Z_{\Theta}^{*} \Delta^{\Theta}_{\lambda} = \overline{\lambda} \Delta^{\Theta}_{\lambda}, \quad \lambda \in \C_{+}.
\end{equation}
Thus
$$\mbox{Rng}(Z_{\Theta} - \lambda I)^{\perp} = \C \Delta^{\Theta}_{\lambda}, \quad \lambda \in \C_{+}.$$ To complete the picture we need to compute $\mbox{Rng}(Z_{\Theta} - \lambda I)^{\perp}$ when $\lambda \in \C_{-}$. Notice the complication here in using the identity in \eqref{e-values-Z} since $\Delta^{\Theta}_{\lambda}$ is not defined when $\lambda \in \C_{-}$.

By results from \cite{Martin1} there exists a conjugation $C_{\Theta}: \mathscr{K}(\Theta) \to \mathscr{K}(\Theta)$, i.e.,  $C _\Theta$ is an involutive, isometric and conjugate linear operator defined by $C_\Theta f = \Theta \circ *$. Here $* : H^2 (\C _+ ) \rightarrow H^2 (\C _- )$ is the involutive, onto, isometric and anti-linear map defined by $* f(z) =: f^* (z)$, where $f^* (z) =: \overline{f (\overline{z} )}$ is defined to be the function in $H^2 (\C _-)$ whose non-tangential boundary values are given by $\overline{f (x) }$. Hence $$ (C_\Theta f) (z) = \Theta (z) \overline{f(\overline{z})} ,$$ and again this has to be interpreted as the function in $H^2 (\C _+ )$
whose non-tangential boundary values are equal to $\Theta (x) \ov{f(x)}$ almost everywhere.  Moreover, $C_{\Theta}$ maps $\mathscr{D}(Z_{\Theta})$ to itself, commutes with $Z_{\Theta}$, and satisfies
$$C_{\Theta} \mbox{Rng}(Z_{\Theta} - \lambda I)^{\perp} = \mbox{Rng}(Z_{\Theta} - \overline{\lambda} I)^{\perp}, \quad \lambda \in \C \setminus \R,$$ as well as
$$C_{\Theta} \Delta^{\Theta}_{\lambda} = \frac{1}{2 \pi i} \frac{\Theta(z) - \Theta(\lambda)}{z - \lambda}.$$ Therefore,
$$\mbox{Rng}(Z_{\Theta} - \lambda I)^{\perp}  = \C \{ C_{\Theta} \Delta^{\Theta}_{\overline{\lambda}} \}, \quad \lambda \in \C_{-}.$$ As in our previous examples, the model space for $Z_{\Theta}$ will be the space of functions of the form
$$\widehat{f}(\lambda) = \begin{cases} \langle f, \Delta^{\Theta}_{\lambda}\rangle_{\Theta} &\mbox{if } \lambda \in \C_{+} \\
\langle f, C_{\Theta} \Delta^{\Theta}_{\overline{\lambda}} \rangle_{\Theta} & \mbox{if } \lambda \in \C_{-}  \end{cases} =
\begin{cases} f(\lambda) &\mbox{if } \lambda \in \C_{+} \\
\overline{(C_{\Theta} f)(\overline{\lambda})} & \mbox{if } \lambda \in \C_{-}. \end{cases}$$

When $\Theta$ is an inner function, we can unpack this a bit further. In this case, as mentioned earlier, the deBranges-Rovnyak space $\mathscr{K}(\Theta)$ is the classical model space $(\Theta H^2(\C_{+}))^{\perp}$. Moreover, the inner product is the usual $L^2(\mathbb{R})$ inner product. For $f \in (\Theta H^2(\C_{+}))^{\perp}$ we have
$\widehat{f}(\lambda) = f(\lambda)$, $\lambda \in \C_{+}$. For $\lambda \in \C_{-}$ we have
\begin{align*}
\widehat{f}(\lambda) & = \langle f, C_{\Theta} \Delta^{\Theta}_{\overline{\lambda}} \rangle\\
& = \frac{1}{2 \pi i} \int_{-\infty}^{\infty} f(x) \overline{\left(\frac{\Theta(x)}{x - \overline{\lambda}}\right)}dx - \frac{1}{2 \pi i}\overline{\Theta(\overline{\lambda})} \int_{-\infty}^{\infty} f(x) \overline{\left(\frac{1}{x - \overline{\lambda}}\right)} dx\\
& = \frac{1}{2 \pi i} \int_{-\infty}^{\infty} \frac{f(x)\overline{\Theta(x)}}{x - \lambda} dx.
\end{align*}
Using Fatou's jump theorem and a similar computation as used in \cite[p.~85]{CR}, one can show that the non-tangential limits of $f/\Theta$ (from $\C_{+}$) are equal to the non-tangential limits of $\widehat{f}$ (from $\C_{-}$) almost everywhere on $\mathbb{R}$. Thus $\widehat{f}$ is a pseudo-continuation of $f/\Theta$ to $\C_{-}$. See also \cite{RS}.

We include $Z_{\Theta}$ in our list of examples since this operator will be closely related to the model operator on the Herglotz space we discuss later on -- and will also help us gain some additional information about the Livsic function.

\section{The main results}

For a fixed $T \in \mathcal{S}_{n}(H)$ modeled by $\Gamma$,  we have an associated $\mathcal{K}$-valued reproducing kernel Hilbert space $\mathcal{H}(\Gamma)$ of analytic functions on $\C \setminus \R$ such that $T$ is unitarily equivalent to $M_{\Gamma}$ (multiplication by the independent variable) on $\mathcal{H}(\Gamma)$. We also have a formula for the reproducing kernel $K_{\lambda}(z) = \Gamma(z)^{*} \Gamma(\lambda)$ for $\mathcal{H}(\Gamma)$. Our first theorem says that the reproducing kernel can be factored in a particular way, which, as we will see momentarily, involves the Livsic characteristic function.

\begin{Theorem} \label{T-main-vector-kernel}
Using the above notation we have
$$K_{\lambda}(z) = \Phi(z) \left(\frac{I - V(z) V(\lambda)^{*}}{1 - \overline{b(\lambda)} b(z)}\right) \Phi(\lambda)^{*},$$
where
$$  V(z) := b(z) \Phi (z) ^{-1} \Psi (z), $$
$$b(z) = \frac{z-i}{z+i}, \quad \Phi (z) := K_i (z) K_i (i) ^{-1/2},   \quad\Psi (z) := K_{-i} (z) K _{-i} (-i) ^{-1/2},$$  and
$\Phi$ and $V$ satisfy the following:
\begin{enumerate}
\item  The function $z \mapsto \Phi(z)$ is a $\mathcal{B}(\mathcal{K})$-valued meromorphic function on $\C \setminus \R$ which is analytic on $\C_{+}$.
\item  The function $z \mapsto \Psi(z)$ is a $\mathcal{B}(\mathcal{K})$-valued meromorphic function on $\C \setminus \R$ which is analytic on $\C_{-}$.
\item The function $z \mapsto V(z)$ is a $\mathcal{B}(\mathcal{K})$-valued meromorphic function on $\C \setminus \R$ such that $\|V(z)\| < 1$ for all $z \in \C_{+}$.
\end{enumerate}
\end{Theorem}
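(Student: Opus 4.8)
The plan is to settle the regularity assertions (1)--(3) by direct inspection of the reproducing kernel, and then to prove the factorization of $K_\lambda(z)$ by reducing it to a scalar operator identity that can be verified with the resolvent of a self-adjoint extension of $T$. First, the easy facts. Since $\Gamma$ is co-analytic, for every $a\in\mathcal K$ and $\mu\in\C\setminus\R$ the function $z\mapsto K_\mu(z)a=\widehat{\Gamma(\mu)a}(z)$ belongs to $\mathcal H(\Gamma)$ and is therefore analytic on $\C\setminus\R$, so $z\mapsto K_\mu(z)$ is a norm-analytic $\mathcal B(\mathcal K)$-valued function on $\C\setminus\R$. Condition \eqref{equation:condition} makes $K_i(i)=\Gamma(i)^{*}\Gamma(i)$ and $K_{-i}(-i)=\Gamma(-i)^{*}\Gamma(-i)$ positive and boundedly invertible, so $\Phi$ and $\Psi$ are analytic (in particular meromorphic) on $\C\setminus\R$; the same condition makes $K_i(z)$ invertible for $z\in\C_+$ and $K_{-i}(z)$ invertible for $z\in\C_-$, hence $\Phi$ is invertible on $\C_+$, $\Psi$ is invertible on $\C_-$, and $V=b\,\Phi^{-1}\Psi$ is analytic on $\C_+$ and meromorphic on $\C\setminus\R$. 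This gives (1), (2) and the analyticity statement in (3).

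For the factorization itself I would use $\Phi(z)V(z)=b(z)\Psi(z)$, the symmetry $K_\mu(\nu)^{*}=K_\nu(\mu)$, and the resulting identities $\Phi(z)\Phi(\lambda)^{*}=K_i(z)K_i(i)^{-1}K_\lambda(i)$, $\Psi(z)\Psi(\lambda)^{*}=K_{-i}(z)K_{-i}(-i)^{-1}K_\lambda(-i)$ to see that the asserted formula is equivalent to the kernel identity
\[
\bigl(1-\overline{b(\lambda)}\,b(z)\bigr)K_\lambda(z)=K_i(z)K_i(i)^{-1}K_\lambda(i)-\overline{b(\lambda)}\,b(z)\,K_{-i}(z)K_{-i}(-i)^{-1}K_\lambda(-i).
\]
Since any two models of $T$ are related by $\widetilde\Gamma(\lambda)=\Gamma(\lambda)C_\lambda$ with $C_\lambda$ a bounded invertible operator on $\mathcal K$, so that $\widetilde K_\lambda(z)=C_z^{*}K_\lambda(z)C_\lambda$, and since the displayed identity is invariant under this substitution (conjugate throughout by $C_z^{*}(\cdot)C_\lambda$; the scalars $b(z),\overline{b(\lambda)}$ do not depend on the model), it suffices to verify it for the Krein model $\Gamma(\lambda)=U_{\overline\lambda}j$ of Proposition~\ref{prop:model}, built from a self-adjoint extension $T'$ of $T$, where $U_\mu=(T'-iI)(T'-\mu I)^{-1}=I+(\mu-i)(T'-\mu I)^{-1}$.

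For this model one computes $K_\lambda(z)=j^{*}U_{\overline z}^{*}U_{\overline\lambda}j$, and, using $U_i=I$, $U_{-i}=b(T')$ and the fact that the $U_\mu$ and $U_\mu^{*}$ are commuting functions of $T'$, also
\[
K_i(z)=j^{*}U_zj,\quad K_{-i}(z)=j^{*}U_{\overline z}^{*}j,\quad K_\lambda(i)=j^{*}U_\lambda^{*}j,\quad K_\lambda(-i)=j^{*}U_{\overline\lambda}j,\quad K_i(i)=K_{-i}(-i)=j^{*}j.
\]
Writing $M_0:=j^{*}j$ and $R(\mu):=j^{*}(T'-\mu I)^{-1}j$, expanding each $U_\mu$ and $U_\mu^{*}$, and applying the resolvent identity to $(T'-zI)^{-1}(T'-\overline\lambda I)^{-1}$, the kernel identity becomes a relation whose two sides are explicit linear combinations of $M_0$, $R(z)$, $R(\overline\lambda)$ and the cross term $R(z)M_0^{-1}R(\overline\lambda)$. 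The coefficient of the cross term is $(z-i)(\overline\lambda+i)-\overline{b(\lambda)}\,b(z)\,(z+i)(\overline\lambda-i)$, which vanishes precisely because $\overline{b(\lambda)}\,b(z)=(z-i)(\overline\lambda+i)\bigl[(z+i)(\overline\lambda-i)\bigr]^{-1}$; once this term is gone, matching the coefficients of $M_0$, $R(z)$ and $R(\overline\lambda)$ is an elementary rational computation (using $1-\overline{b(\lambda)}\,b(z)=-2i(z-\overline\lambda)\bigl[(z+i)(\overline\lambda-i)\bigr]^{-1}$). This proves the factorization.

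It remains to prove the strict inequality $\|V(z)\|<1$ in (3): on $\C_+$ the factorization together with the invertibility of $\Phi$ there shows that positivity of the reproducing kernel $K$ is equivalent to positivity of the kernel $(z,\lambda)\mapsto\bigl(1-\overline{b(\lambda)}\,b(z)\bigr)^{-1}\bigl(I-V(z)V(\lambda)^{*}\bigr)$ on $\C_+$, and evaluating at $z=\lambda$ gives $I-V(z)V(z)^{*}\geqslant 0$; moreover, from the factorization at $z=\lambda$ and the invertibility of $\Phi(z)$ and of $K_z(z)=\Gamma(z)^{*}\Gamma(z)$ for $z\in\C_+$, the operator $I-V(z)V(z)^{*}=(1-|b(z)|^{2})\Phi(z)^{-1}K_z(z)\Phi(z)^{-*}$ is boundedly invertible, whence $\|V(z)\|<1$. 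The only genuinely computational step is the Krein-model verification of the kernel identity, and the decisive point there is the cancellation of the $R(z)M_0^{-1}R(\overline\lambda)$ coefficient, which is exactly the defining relation for $\overline{b(\lambda)}\,b(z)$; after that everything is bookkeeping with partial fractions.
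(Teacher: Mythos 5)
Your proposal is correct, but the route to the factorization is genuinely different from the paper's. The paper never leaves the space $\mathcal{H}(\Gamma)$: it introduces the projections $P_{\pm i}$ onto $\{f: f(\pm i)=0\}^{\perp}$, the operators $L=\tfrac{1}{b}(I-P_i)$ and $R=b(I-P_{-i})$ (shown bounded via the division lemma), proves $L^{*}=R$ using the Cayley transform of $M_{\Gamma}$, and from this extracts the interpolation identity expressing $(1-\overline{b(\lambda)}b(z))K_{\lambda}(z)a$ through $P_{\pm i}K_{\lambda}(\cdot)a$, which are then identified with $\Phi(z)\Phi(\lambda)^{*}a$ and $\Psi(z)\Psi(\lambda)^{*}a$ (Lemma \ref{choice-phi}); this keeps the argument free of any self-adjoint extension or resolvent, in line with the paper's stated aim, and the operators $L,R$ are intrinsic to the RKHS. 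You instead observe that the asserted factorization is equivalent to the pointwise identity $(1-\overline{b(\lambda)}b(z))K_{\lambda}(z)=K_i(z)K_i(i)^{-1}K_{\lambda}(i)-\overline{b(\lambda)}b(z)K_{-i}(z)K_{-i}(-i)^{-1}K_{\lambda}(-i)$, note its invariance under the change of model $\widetilde K_{\lambda}(z)=C_z^{*}K_{\lambda}(z)C_{\lambda}$ (valid, since for fixed $\lambda$ the operator $C_{\lambda}=\Gamma(\lambda)^{-1}\widetilde\Gamma(\lambda)$ is bounded and invertible and no analyticity in $\lambda$ is needed), and then verify it in the Krein model of Proposition \ref{prop:model} by expanding $U_{\mu}$, $U_{\mu}^{*}$ and using the resolvent identity; I checked your special values $K_i(z)=j^{*}U_zj$, $K_{-i}(z)=j^{*}U_{\overline z}^{*}j$, $K_{\lambda}(\pm i)$, $K_i(i)=K_{-i}(-i)=j^{*}j$, and the cancellation of the $R(z)M_0^{-1}R(\overline\lambda)$ term together with the matching of the $M_0$, $R(z)$, $R(\overline\lambda)$ coefficients, and all are correct. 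What your route buys is a short, transparent computation that explains exactly why $b$ appears (the cross-term cancellation is the defining relation for $\overline{b(\lambda)}b(z)$); what it costs is reintroducing the extension $T'$ and its resolvent inside the proof, which the paper's intrinsic argument avoids. Your proof of $\|V(z)\|<1$ (bounded invertibility of $I-V(z)V(z)^{*}$ from the factorization at $z=\lambda$ together with invertibility of $\Phi(z)$ and $K_z(z)$) is in substance the paper's Lemma \ref{L-inv}, just phrased more directly, and it works for $n=\infty$ as well. One small remark: like the paper, you treat the meromorphy of $\Phi^{-1}$, hence of $V$, on $\C_{-}$ rather lightly; the clean statements are analyticity and strict contractivity on $\C_{+}$, which is all that is used later.
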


The skeptical reader might be wondering why factoring the reproducing kernel in this particular way is important.  This is answered by the following corollary.

\begin{Corollary}\label{Cor-main-Liv-ue}
For $T_1 \in \mathcal{S}_{n}(\mathcal{H}_1)$ and $T_2 \in \mathcal{S}_{n}(\mathcal{H}_2)$ let $V_1$ and $V_2$ be the corresponding operators from Theorem \ref{T-main-vector-kernel}. Then $T_1$ is unitarily equivalent to $T_2$ if and only if there are constant unitary operators $R, Q$ on $\mathcal{K}$ so that
$$V_1(z) = R V_2(z) Q, \quad z \in \C_{+}.$$
\end{Corollary}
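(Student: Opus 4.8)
\textbf{Proof proposal for Corollary \ref{Cor-main-Liv-ue}.}

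The plan is to deduce this corollary from Theorem \ref{T-main-vector-kernel} together with the identification $T \cong M_\Gamma$ from Proposition \ref{H-Gamma2}, essentially by recognizing $V$ as (a version of) the Livsic characteristic function $w_T$ and invoking Livsic's theorem (Theorem \ref{intro-Livsic}); but it is cleaner to give a self-contained reproducing-kernel argument. First I would observe that both directions reduce to a statement about the kernels $K^{(1)}_\lambda(z) = \Gamma_1(z)^*\Gamma_1(\lambda)$ and $K^{(2)}_\lambda(z)$ of the two model spaces $\mathcal{H}(\Gamma_1)$ and $\mathcal{H}(\Gamma_2)$: namely, $T_1 \cong T_2$ if and only if there is a unitary $W\colon \mathcal{H}(\Gamma_1)\to\mathcal{H}(\Gamma_2)$ intertwining multiplication by the independent variable, and (by a standard reproducing-kernel argument) such an intertwiner exists precisely when the two kernels agree up to a left/right change of frame, i.e. there is a $\mathcal{B}(\mathcal{K})$-valued analytic invertible function $F$ on $\C\setminus\R$ with $K^{(1)}_\lambda(z) = F(z)\,K^{(2)}_\lambda(z)\,F(\lambda)^*$. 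The point of Theorem \ref{T-main-vector-kernel} is that each $K_\lambda(z)$ factors as $\Phi(z)\bigl(\tfrac{I - V(z)V(\lambda)^*}{1-\overline{b(\lambda)}b(z)}\bigr)\Phi(\lambda)^*$, so that all the data of $K$, modulo the "trivial" outer factor $\Phi$, is encoded in the contractive function $V$.

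Second, for the easy direction, suppose $V_1(z) = R\,V_2(z)\,Q$ on $\C_+$ for constant unitaries $R,Q$ on $\mathcal{K}$. Then $I - V_1(z)V_1(\lambda)^* = R(I - V_2(z)V_2(\lambda)^*)R^*$, so plugging into the factorization gives
$$K^{(1)}_\lambda(z) = \Phi_1(z) R \,\Phi_2(z)^{-1}\,K^{(2)}_\lambda(z)\,\Phi_2(\lambda)^{-*}R^*\,\Phi_1(\lambda)^*$$
on $\C_+$, which exhibits the required $F(z) := \Phi_1(z)R\,\Phi_2(z)^{-1}$; since both sides are meromorphic on $\C\setminus\R$ and agree on $\C_+$, the identity persists on $\C_-$. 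One then checks that the map $\widehat{f}_2 \mapsto F(\cdot)\widehat{f}_2$ is a well-defined unitary from $\mathcal{H}(\Gamma_2)$ onto $\mathcal{H}(\Gamma_1)$ — this follows from the general principle that two reproducing kernel Hilbert spaces whose kernels are related by $K^{(1)} = F K^{(2)} F^*$ are unitarily equivalent via multiplication by $F$ — and that this unitary intertwines multiplication by $z$ since $F$ does not depend on the function, only on the variable. Hence $M_{\Gamma_1}\cong M_{\Gamma_2}$, i.e. $T_1\cong T_2$ by Proposition \ref{H-Gamma2}.

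Third, for the converse, suppose $T_1\cong T_2$. Then $M_{\Gamma_1}\cong M_{\Gamma_2}$, and a unitary intertwiner $W$ between them is automatically multiplication by a $\mathcal{B}(\mathcal{K})$-valued analytic function $F$ on $\C\setminus\R$ (because $W$ must carry the kernel vectors $K^{(2)}_\lambda(\cdot)a$ to kernel vectors of $\mathcal{H}(\Gamma_1)$, up to the action on $\mathcal{K}$), giving $K^{(1)}_\lambda(z) = F(z)K^{(2)}_\lambda(z)F(\lambda)^*$. Substituting the factorizations from Theorem \ref{T-main-vector-kernel} and rearranging, one gets
$$\frac{I - V_1(z)V_1(\lambda)^*}{1-\overline{b(\lambda)}b(z)} = G(z)\,\frac{I - V_2(z)V_2(\lambda)^*}{1-\overline{b(\lambda)}b(z)}\,G(\lambda)^*, \qquad G := \Phi_1^{-1}F\Phi_2.$$
Evaluating at $\lambda = i$ (where $b(\lambda)=0$ and $V_k(i)=0$) shows $I = G(z)G(i)^*$, so $G$ is constant, $G\equiv G(i)=:U_0$, and then $I = U_0 U_0^*$ forces $U_0$ unitary; reading the remaining identity gives $V_1(z)V_1(\lambda)^* = U_0 V_2(z)V_2(\lambda)^* U_0^*$. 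To pass from this to $V_1 = R V_2 Q$ pointwise, I would fix $\lambda=i$-like comparisons are insufficient since $V_k(i)=0$; instead use that $V_k$ is, up to the normalizations $\Phi_k, \Psi_k$, the Livsic characteristic function $w_{T_k}$, which is determined up to the equivalence \eqref{RQ-equiv} — so $V_1 V_1^* = U_0 (V_2 V_2^*) U_0^*$ together with the structure of $V_k$ (e.g. $V_k = b\,\Phi_k^{-1}\Psi_k$ with the stated analyticity) yields the two-sided unitary equivalence. The main obstacle is precisely this last step: extracting a \emph{two-sided} (left $R$, right $Q$) relation $V_1 = RV_2Q$ from the \emph{one-sided} kernel identity $V_1V_1^* = U_0 V_2V_2^* U_0^*$. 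This requires an inner–outer / Douglas-type lemma: $V_1V_1^* = (U_0V_2)(U_0V_2)^*$ as contractive analytic functions forces $V_1 = U_0V_2\,Q$ for some \emph{constant} unitary (or co-isometric) $Q$ on $\mathcal{K}$, using that $V_k(i)=0$ and both have the same "size" since the $T_k$ have equal deficiency indices; I expect the paper handles this via the pseudocontinuation / conjugation structure on $\mathscr{K}(\Theta)$ appearing in its main results, or by directly invoking Theorem \ref{intro-Livsic} once $V_k$ is identified with $w_{T_k}$ up to equivalence.
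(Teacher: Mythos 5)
Your forward direction ($V_1 = RV_2Q \Rightarrow T_1 \cong T_2$) is essentially the paper's argument: rewrite the factored kernel to get $K^{1}_{\lambda}(z) = F(z)K^{2}_{\lambda}(z)F(\lambda)^{*}$ for an explicit multiplier (your $F = \Phi_1 R \Phi_2^{-1}$ coincides with the paper's $G = \Psi_1 Q^{*}\Psi_2^{-1}$, via $\Phi_1 R = \Psi_1 Q^{*}\Psi_2^{-1}\Phi_2$), and then build the unitary intertwiner from the kernel identity. That half is fine.

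The converse, however, has a genuine gap, and you have flagged it yourself: after obtaining $I - V_1(z)V_1(\lambda)^{*} = G(z)\bigl(I - V_2(z)V_2(\lambda)^{*}\bigr)G(\lambda)^{*}$ and concluding $G \equiv U_0$ constant unitary, you are left with the one-sided identity $V_1(z)V_1(\lambda)^{*} = U_0V_2(z)V_2(\lambda)^{*}U_0^{*}$, and the passage to the two-sided relation $V_1 = RV_2Q$ is never carried out. Appealing to "an inner--outer / Douglas-type lemma" is not a proof (one would at least have to define $Q^{*}$ on $\bigvee_{\lambda}\mbox{Rng}\,V_2(\lambda)^{*}$ by $Q^{*}V_2(\lambda)^{*}U_0^{*}a := V_1(\lambda)^{*}a$, check via the kernel identity that this is isometric, and show these ranges span $\mathcal{K}$ so that $Q$ is unitary -- none of which you do), and invoking Theorem \ref{intro-Livsic} would defeat the purpose, since this corollary together with Corollary \ref{VisW} is precisely how the paper re-proves Livsic's theorem. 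The paper avoids your obstacle entirely: it never forms the quotient identity at general $\lambda$. Instead, using Lemma \ref{multiplier-matrx}, it applies the multiplier $W$ to the \emph{normalized} kernel vectors $K^{1}_{\lambda}(\cdot)K^{1}_{\lambda}(\lambda)^{-1/2}a$, shows by an inner-product computation that the induced map $R(\lambda)$ on $\mathcal{K}$ is unitary for each $\lambda$, and then specializes to the two deficiency points $\lambda = \pm i$: this gives $W(z)\Phi_1(z) = \Phi_2(z)R(i)$ and $W(z)\Psi_1(z) = \Psi_2(z)R(-i)$ with \emph{constant} unitaries $R(\pm i)$, whence directly $V_1 = b\,\Phi_1^{-1}\Psi_1 = R(i)^{*}\,V_2\,R(-i)$. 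The left and right unitaries thus arise from the two separate points $i$ and $-i$, not from a factorization lemma; supplying this (or an honest completion of your span argument) is what your proposal is missing.
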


What does all this have to do with Livsic's theorem?

\begin{Corollary} \label{VisW}
If $n < \infty$ and $T \in \mathcal{S}_{n}(\mathcal{H})$ then there are constant unitary matrices $R, Q$ so that
$$w_T(z) = R V(z) Q, \quad z \in \C_{+},$$ i.e.,  $w_T (z)$ and $V(z)$ are equivalent in terms of \eqref{RQ-equiv}.
\end{Corollary}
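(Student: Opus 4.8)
The plan is to exploit the rigidity of both functions so as to reduce to a single convenient situation, and then to compute directly with the Krein model. Observe first that if $\Gamma_1,\Gamma_2$ are two models for the same $T$, then $M_{\Gamma_1}$ and $M_{\Gamma_2}$ are each unitarily equivalent to $T$ by Proposition~\ref{H-Gamma2}, hence to one another, so Corollary~\ref{Cor-main-Liv-ue} supplies constant unitaries $R,Q$ on $\mathcal{K}$ with $V_1 = R\,V_2\,Q$; that is, the function $V$ of Theorem~\ref{T-main-vector-kernel} is independent of the model up to the equivalence~\eqref{RQ-equiv}. Since, as recalled after~\eqref{RQ-equiv}, the Livsic function $w_T$ of~\eqref{Livsic-char} is independent of the canonical self-adjoint extension $T'$ and of the orthonormal basis $\{u_j\}_{j=1}^n$ of $\mbox{Ker}(T^*-iI)$, again up to~\eqref{RQ-equiv}, and since~\eqref{RQ-equiv} is an equivalence relation, it is enough to exhibit \emph{one} model $\Gamma$ and \emph{one} admissible pair $(T',\{u_j\})$ for which $V = w_T$ on $\C_{+}$.

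For this I would run the Krein construction~\eqref{Krein-trick} with a unitary identification. Fix a canonical self-adjoint extension $T'$, fix an orthonormal basis $\{u_j\}_{j=1}^n$ of $\mbox{Ker}(T^*-iI)=\mbox{Rng}(T+iI)^{\perp}$, let $J\colon\mathcal{K}=\C^n\to\mbox{Rng}(T+iI)^{\perp}$ be the \emph{unitary} with $Je_j=u_j$, and set $\Gamma(\lambda):=U_{\overline\lambda}J$ with $U_\mu$ as in~\eqref{K-form}. Then $U_i=I$ while $U_{-i}=(T'-iI)(T'+iI)^{-1}$ is the Cayley transform of $T'$, which is unitary; hence $\Gamma(-i)=J$ and $\Gamma(i)=U_{-i}J$, so $K_{-i}(-i)=\Gamma(-i)^*\Gamma(-i)=J^*J=I$ and $K_i(i)=\Gamma(i)^*\Gamma(i)=J^*U_{-i}^*U_{-i}J=J^*J=I$. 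Consequently the normalizing factors $K_i(i)^{-1/2}$ and $K_{-i}(-i)^{-1/2}$ in Theorem~\ref{T-main-vector-kernel} are the identity, so $\Phi(z)=K_i(z)$ and $\Psi(z)=K_{-i}(z)$.

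It then remains to match $\Phi,\Psi$ with the matrices $B,A$ of~\eqref{Livsic-char}. Using that $T'$ is self-adjoint one has $U_{\overline z}^{*}=(T'+iI)(T'-zI)^{-1}=I+(z+i)(T'-zI)^{-1}$, and, by the functional calculus for $T'$, $U_{\overline z}^{*}U_{-i}=(T'+iI)(T'-zI)^{-1}(T'-iI)(T'+iI)^{-1}=(T'-iI)(T'-zI)^{-1}=U_z$. Hence, for $z\in\C_{+}$, $\Phi(z)=\Gamma(z)^*\Gamma(i)=J^*U_{\overline z}^{*}U_{-i}J=J^*U_zJ$ and $\Psi(z)=\Gamma(z)^*\Gamma(-i)=J^*U_{\overline z}^{*}J=J^*\bigl(I+(z+i)(T'-zI)^{-1}\bigr)J$. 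Since $Je_j=u_j$, the matrix of $J^*U_zJ$ in the basis $\{e_j\}$ is exactly $B(z)$ and the matrix of $J^*U_{\overline z}^{*}J$ is exactly $A(z)$, so $V(z)=b(z)\Phi(z)^{-1}\Psi(z)=b(z)B(z)^{-1}A(z)=w_T(z)$, with $b$ as in~\eqref{Blaschke-b}. This settles the claim for this model and data, and the general statement follows from the reduction in the first paragraph, the unitaries $R,Q$ being composites of those furnished by Corollary~\ref{Cor-main-Liv-ue} and by the equivalence of the Livsic functions.

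I expect the only real obstacle to be the bookkeeping in the last step: computing $U_{\overline z}^{*}$ from the self-adjointness of $T'$, verifying the product identity $U_{\overline z}^{*}U_{-i}=U_z$, and — the genuinely essential point — checking that the normalizations $K_{\pm i}(\pm i)^{-1/2}$ are trivial. That last point is the real constraint: it is exactly why one must take the isomorphism $J$ in~\eqref{Krein-trick} to be \emph{unitary}, since for a general bounded isomorphism $j$ one gets only $K_{\pm i}(\pm i)=j^*j$ and conjugation by $(j^*j)^{1/2}$ is not of the form~\eqref{RQ-equiv}; it is also what ensures that $\Phi$ and $\Psi$ coincide with the matrices $B$ and $A$ themselves rather than with rescaled copies of them.
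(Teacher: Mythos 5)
Your proposal is correct, and its computational core is the same as the paper's: in both arguments the decisive step is that, for the Krein operators $U_{\lambda}$ of \eqref{K-form}, the kernels $K_{\pm i}$ reproduce the Livsic matrices, so that $\Phi,\Psi$ line up with $B,A$ and the $b(z)$ factors match. The difference is organizational. The paper keeps an \emph{arbitrary} model $\Gamma$, passes to $\mathcal{H}(\Gamma)$, and adapts the Livsic data to it: it takes $u_j=K_{-i}(\cdot)K_{-i}(-i)^{-1/2}e_j$ as the orthonormal basis of $\mbox{Ker}(M^*-iI)$, introduces the invertible operators $V_z$ (from $(M'-iI)(M'-\overline{z}I)^{-1}u_j=K_z(\cdot)V_ze_j$) and a constant unitary $U$, and computes $A=V_z^{*}\Psi$, $B=V_z^{*}\Phi U$, so that $V_z^{*}$ cancels and $w_T=U^{*}V_T$ for that model directly. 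You instead normalize the \emph{model} rather than the basis: by taking the isomorphism in \eqref{Krein-trick} to be the unitary $J$ with $Je_j=u_j$, you get $K_{\pm i}(\pm i)=I$, the square-root normalizations of Theorem \ref{T-main-vector-kernel} disappear, and the identity $V=w_T$ holds exactly for that model and data; the general statement is then transferred using Corollary \ref{Cor-main-Liv-ue} (model-independence of $V$ up to \eqref{RQ-equiv}) together with the classical invariance of $w_T$ under change of $T'$ and of the orthonormal basis, both legitimately available at this point. What your route buys is a cleaner computation (no auxiliary $V_z$ or $U$ to track, and a transparent explanation of why the unitarity of $J$ matters); what the paper's route buys is that it never needs the transfer step — it exhibits the intertwining unitary for an arbitrary model in one pass, which also makes clearer that the same computation works verbatim for $n=\infty$, where your appeal to the finite-matrix statement of \eqref{RQ-equiv} would have to be rephrased. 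Minor bookkeeping caveats (the row/column convention in $[\langle\cdot\,u_j,u_k\rangle]$, and that the resolvents of $T'$ at different points commute) are handled at the same level of rigor as in the paper.
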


As to be expected, the proof will require some preliminary technical results.

\begin{Remark}Let us assume that we are working at the $\mathcal{H}(\Gamma)$ level and thus equate $f \in \mathcal{H}$ with $\widehat{f} \in \mathcal{H}(\Gamma)$. This will avoid the cumbersome $\widehat{f}$ notation in all of our calculations. With this understanding, we also note that we are now determining when $M_{\Gamma_1} \cong M_{\Gamma_2}$, where $\Gamma_j$ is a model for $T_j \in \mathcal{S}_{n}(\mathcal{H}_j)$  and $M_{\Gamma_j}$ is multiplication by the independent variable from Proposition \ref{H-Gamma2}.\end{Remark}

As we head towards our formula for $K_{\lambda}(z)$, let $P_{\pm i}$ be the orthogonal projections of $\mathcal{H}(\Gamma)$ onto
$$\{f \in \mathcal{H}(\Gamma): f(\pm i) = 0\}^{\perp}.$$ By the definition of reproducing kernel notice that
\begin{equation} \label{PH}
\{f \in \mathcal{H}(\Gamma): f(\pm i) = 0\}^{\perp} = \bigvee\{K_{\pm i}(z) a: a \in \mathcal{K}\}.
\end{equation}
Define
$$L, R: \mathcal{H}(\Gamma) \to \mathcal{H}(\Gamma)$$ by the formulas
$$L = \frac{1}{b} (I - P_i), \quad R = b (I - P_{-i}).$$
The alert reader might wonder why these operators $L$ and $R$ are actually defined on $\mathcal{H}(\Gamma)$.
Since $I - P_{\pm}$ is the orthogonal projection of $\mathcal{H}(\Gamma)$ onto $\{f \in \mathcal{H}(\Gamma): f(\pm i) = 0\}$ and
$$b(z) = 1 - \frac{2 i}{z - i}, \quad \frac{1}{b(z)} = 1 + \frac{2 i}{z + i},$$
the operators $R$ and $L$ become well-defined, and, by the closed graph theorem, bounded on $\mathcal{H}(\Gamma)$ once we verify the following lemma.

\begin{Lemma}
    If $f \in \mathcal{H} (\Gamma)$ and $f (w) =0$ for some $w\in \mathbb{C} \setminus \mathbb{R}$, then $\frac{f }{z-w} \in \mathcal{H} (\Gamma)$.
\end{Lemma}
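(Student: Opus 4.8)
The plan is to reduce the statement to the already-established characterization \eqref{Gamma-star-0}, which is the only nontrivial ingredient. Write the given function $f \in \mathcal{H}(\Gamma)$ as $f = \widehat{h}$ for the unique $h \in \mathcal{H}$ with $f(z) = \widehat{h}(z) = \Gamma(z)^{\ast} h$; recall from Proposition \ref{H-Gamma} that $h \mapsto \widehat{h}$ is an isometric isomorphism of $\mathcal{H}$ onto $\mathcal{H}(\Gamma)$. The hypothesis $f(w) = 0$ then reads $\Gamma(w)^{\ast} h = 0$, and since $w \in \C \setminus \R$, \eqref{Gamma-star-0} gives $h \in \mbox{Rng}(T - wI)$; that is, $h = (T - wI)g$ for some $g \in \mathscr{D}(T)$.

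Next I would run exactly the computation already used in the proof of Proposition \ref{H-Gamma2}. For every $z \in \C \setminus \R$,
\[
f(z) = \Gamma(z)^{\ast}(T - wI)g = \Gamma(z)^{\ast}(T - zI)g + (z - w)\,\Gamma(z)^{\ast} g = (z - w)\,\widehat{g}(z),
\]
because $(T - zI)g \in \mbox{Rng}(T - zI)$ forces $\Gamma(z)^{\ast}(T - zI)g = 0$ by \eqref{Gamma-star-0}. Dividing by $z - w$ at points $z \neq w$ gives $f(z)/(z - w) = \widehat{g}(z)$; since $f$ and $\widehat{g}$ are analytic on $\C \setminus \R$ and agree off the single point $w$, the quotient $f/(z - w)$ is precisely the analytic function $\widehat{g}$ (the zero of $f$ at $w$ cancels the pole of $1/(z-w)$). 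In particular $f/(z - w) = \widehat{g} \in \mathcal{H}(\Gamma)$, which is the claim.

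I do not expect any genuine obstacle here: the whole argument is \eqref{Gamma-star-0} plus one line of algebra. The single point that deserves care — and the only place the hypothesis $w \notin \R$ (rather than $w \in \C$) is used — is the step $\Gamma(w)^{\ast} h = 0 \Rightarrow h = (T - wI)g$ with $g \in \mathscr{D}(T)$: this requires $\mbox{Rng}(T - wI)$ to be a closed subspace, which holds because $T$ is closed and symmetric, so $T - wI$ is bounded below on $\mathscr{D}(T)$ for any $w \in \C \setminus \R$, and hence $\mbox{Ker}(\Gamma(w)^{\ast}) = \mbox{Rng}(\Gamma(w))^{\perp} = \mbox{Rng}(T - wI)^{\perp\perp} = \mbox{Rng}(T - wI)$ exactly, as used in the proof of Proposition \ref{H-Gamma}.
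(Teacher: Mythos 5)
Your proof is correct and follows essentially the same route as the paper: use $\Gamma(w)^{*}h = 0 \Rightarrow h \in \mbox{Rng}(T - wI)$, write $h = (T - wI)g$, and compute $\Gamma(z)^{*}(T - wI)g = (z - w)\widehat{g}(z)$ to conclude $f/(z-w) = \widehat{g} \in \mathcal{H}(\Gamma)$. Your closing remark about the closedness of $\mbox{Rng}(T - wI)$ is a fair point of extra care that the paper leaves implicit in its definition of a model.
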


\begin{proof}
    If $f (w) =0$ then $\Gamma (w) ^* f =0$ which implies $f \in \mbox{Rng} (T-w I)$. Thus $f = (T-w)g$ for some $g \in \mathscr{D} (T)$. Now,
    \begin{eqnarray} f (z) & = & \Gamma (z) ^* f \nonumber\\
    &= &\Gamma (z) ^* (T-w) g \nonumber \\
    & = & \Gamma (z) ^* \left( (T-z) + (z-w) \right) g \nonumber \\
    & = & (z-w) g (z), \end{eqnarray} which demonstrates that $\frac{f }{z-w} = g \in \mathcal{H} (\Gamma)$.
\end{proof}

Since $M_{\Gamma} f = z f$ on $\mathcal{H}(\Gamma)$ is symmetric with equal deficiency indices, then its \emph{Cayley transform} \cite{A-G}
$$C_{\Gamma} := (M_{\Gamma} - i I)(M_{\Gamma} + i I)^{-1},$$
is a partial isometry with initial space $(P_{-i} \mathcal{H}(\Gamma))^{\perp}$ and final space $(P_{i} \mathcal{H}(\Gamma))^{\perp}$. Moreover
$$C_{\Gamma} f = b f, \quad f \in (P_{-i} \mathcal{H}(\Gamma))^{\perp}.$$

\begin{Lemma}\label{RLnn}
$L^{*} = R$
\end{Lemma}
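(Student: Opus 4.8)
The plan is to recognize $R$ as the Cayley transform $C_{\Gamma}$ itself, to recognize $L$ as its Hilbert-space adjoint $C_{\Gamma}^{*}$, and then simply to read off $L^{*}=C_{\Gamma}=R$.

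First I would check that $R=C_{\Gamma}$. Recall from the preceding discussion that $C_{\Gamma}$ is a partial isometry with initial space $(P_{-i}\mathcal{H}(\Gamma))^{\perp}=\{f\in\mathcal{H}(\Gamma):f(-i)=0\}=\mbox{Rng}(I-P_{-i})$, with final space $(P_{i}\mathcal{H}(\Gamma))^{\perp}=\mbox{Rng}(I-P_{i})$, and that $C_{\Gamma}f=bf$ for $f$ in its initial space. A partial isometry annihilates the orthogonal complement of its initial space, so $C_{\Gamma}$ kills $\mbox{Rng}(P_{-i})$. On the other hand $b\,(I-P_{-i})$ does precisely the same thing: it multiplies by $b$ on $\mbox{Rng}(I-P_{-i})$ --- which lands back inside $\mathcal{H}(\Gamma)$ because the functions there vanish at $-i$, cancelling the pole of $b$ at $-i$ (this is exactly the preceding lemma applied with $w=-i$) --- and it vanishes on $\mbox{Rng}(P_{-i})$. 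Hence $R=b\,(I-P_{-i})=C_{\Gamma}$.

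Next I would check that $L=C_{\Gamma}^{*}$. Being a partial isometry, $C_{\Gamma}$ restricts to a unitary from its initial space $\mbox{Rng}(I-P_{-i})$ onto its final space $\mbox{Rng}(I-P_{i})$; consequently $C_{\Gamma}^{*}$ is the partial isometry whose initial space is $\mbox{Rng}(I-P_{i})$, whose final space is $\mbox{Rng}(I-P_{-i})$, which on $\mbox{Rng}(I-P_{i})$ equals the inverse of that unitary --- namely multiplication by $1/b$, well-defined there because functions in $\mbox{Rng}(I-P_{i})$ vanish at $i$, cancelling the pole of $1/b$ at $i$ --- and which vanishes on $\mbox{Rng}(P_{i})$. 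That is exactly the description of $\tfrac{1}{b}(I-P_{i})$, so $L=C_{\Gamma}^{*}$. Taking Hilbert-space adjoints gives $L^{*}=(C_{\Gamma}^{*})^{*}=C_{\Gamma}=R$.

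The only point requiring a little care is the legitimacy of the formal operators $\tfrac{1}{b}(I-P_{i})$ and $b\,(I-P_{-i})$ as genuine bounded operators on $\mathcal{H}(\Gamma)$; but this is precisely what the preceding lemma together with the closed graph theorem provide, and it has already been arranged in the text just above the statement. So I do not expect a real obstacle here: once the set-theoretic descriptions of the initial and final spaces of $C_{\Gamma}$ and $C_{\Gamma}^{*}$ are matched against the projections $P_{\pm i}$, the identification is purely formal, and the whole content of the lemma is this bookkeeping.
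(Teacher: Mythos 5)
Your proof is correct, and it runs on exactly the same engine as the paper's --- the Cayley-transform facts recorded just before the lemma --- but it packages them differently. The paper never identifies $L$ and $R$ with $C_{\Gamma}^{*}$ and $C_{\Gamma}$; instead it verifies $\langle Lf,g\rangle_{\mathcal{H}(\Gamma)}=\langle f,Rg\rangle_{\mathcal{H}(\Gamma)}$ by a direct computation: since $(Lf)(-i)=0=(Rf)(i)$, only the components $g-P_{-i}g$ and $f-P_{i}f$ contribute, and the isometry of multiplication by $b$ on $(P_{-i}\mathcal{H}(\Gamma))^{\perp}$ lets one insert $b$ on both sides of the inner product and simplify. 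You instead observe that $R=C_{\Gamma}$ and $L=C_{\Gamma}^{*}$ --- using the convention that the Cayley transform, a priori defined only on its initial space $\mbox{Rng}(M_{\Gamma}+iI)=(P_{-i}\mathcal{H}(\Gamma))^{\perp}$, is extended by zero on $P_{-i}\mathcal{H}(\Gamma)$, which is exactly what the paper's phrase ``partial isometry with initial space $(P_{-i}\mathcal{H}(\Gamma))^{\perp}$ and final space $(P_{i}\mathcal{H}(\Gamma))^{\perp}$'' licenses --- and then quote the standard structure fact that the adjoint of a partial isometry is the partial isometry acting as the inverse of the unitary restriction, from the final space back onto the initial space. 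That structure fact is precisely what the paper's inner-product computation proves in this particular instance, so the mathematical content coincides; your version buys a computation-free derivation (and makes the roles of the two subspaces transparent) at the cost of the zero-extension bookkeeping, while the paper's argument is self-contained and never needs to name $C_{\Gamma}^{*}$.
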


\begin{proof}
Note that $(L f)(-i) = 0 = (R f)(i)$ and so
$$\langle L f, P_{-i} g \rangle_{\mathcal{H}(\Gamma)} = 0 = \langle P_{i} f, R g \rangle_{\mathcal{H}(\Gamma)}.$$
 Also note that, via the above Cayley transform discussion, the map $f \mapsto b f$ is a partial isometry with initial space $(P_{-i} \mathcal{H}(\Gamma))^{\perp}$ and final space $(P_{i} \mathcal{H}(\Gamma))^{\perp}$.
Putting this all together we get
\begin{align*}
\langle Lf, g \rangle_{\mathcal{H}(\Gamma)} & = \langle L f, g - P_{-i} g \rangle_{\mathcal{H}(\Gamma)}\\
& = \langle b L f, b(g - P_{-i} g)\rangle_{\mathcal{H}(\Gamma)}\\
& = \langle f - P_{i} f, R g \rangle_{\mathcal{H}(\Gamma)}\\
& = \langle f, R g\rangle_{\mathcal{H}(\Gamma)},
\end{align*}
which proves $L^{*} = R$.
\end{proof}

\begin{Lemma}
For each $a \in \mathcal{K}$ and $\lambda, z \in \C \setminus \R$ we have
$$K_{\lambda}(z) a = \frac{(P_{i} K_{\lambda}(\cdot) a)(z) - \overline{b(\lambda)} b(z) (P_{-i} K_{\lambda}(\cdot) a)(z)}{1 - \overline{b(\lambda)} b(z)}.$$
\end{Lemma}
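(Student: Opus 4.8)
The plan is to reduce the stated identity to a statement about the projections $P_{\pm i}$ that can be checked by pairing against reproducing kernel vectors. Clearing the denominator $1-\overline{b(\lambda)}b(z)$ and rearranging, the claim is equivalent to the pointwise identity
\[
\big((I-P_i)K_\lambda(\cdot)a\big)(z)=\overline{b(\lambda)}\,b(z)\,\big((I-P_{-i})K_\lambda(\cdot)a\big)(z),\qquad z,\lambda\in\C\setminus\R,\ a\in\mathcal{K}.
\]
Since $(I-P_i)K_\lambda(\cdot)a$ vanishes at $i$, the lemma above permits dividing by $b$ inside $\mathcal{H}(\Gamma)$, and the identity becomes the Hilbert space identity
\[
L\,K_\lambda(\cdot)a=\overline{b(\lambda)}\,(I-P_{-i})K_\lambda(\cdot)a\qquad\text{in }\mathcal{H}(\Gamma).
\]
When $\lambda=i$ both sides vanish (as $K_i(\cdot)a\in P_i\mathcal{H}(\Gamma)$ and $b(i)=0$), so from here on we assume $b(\lambda)\ne0$.

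I would then prove this identity by checking that both sides have the same inner product with every reproducing kernel vector $K_z(\cdot)c$, $z\in\C\setminus\R$, $c\in\mathcal{K}$; these span $\mathcal{H}(\Gamma)$. On the left, Lemma \ref{RLnn} gives $\langle L K_\lambda(\cdot)a,K_z(\cdot)c\rangle=\langle K_\lambda(\cdot)a,RK_z(\cdot)c\rangle$, and using $R=b(I-P_{-i})$ together with the reproducing property $\langle K_\lambda(\cdot)a,g\rangle=\langle a,g(\lambda)\rangle$ this becomes $\overline{b(\lambda)}\big(\langle a,K_z(\lambda)c\rangle-\langle a,(P_{-i}K_z(\cdot)c)(\lambda)\rangle\big)$. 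On the right, the reproducing property gives $\overline{b(\lambda)}\big(\langle K_\lambda(z)a,c\rangle-\langle (P_{-i}K_\lambda(\cdot)a)(z),c\rangle\big)$. Cancelling $\overline{b(\lambda)}$, it remains to establish the two Hermitian-symmetry identities
\[
\langle a,K_z(\lambda)c\rangle=\langle K_\lambda(z)a,c\rangle,\qquad \langle a,(P_{-i}K_z(\cdot)c)(\lambda)\rangle=\langle (P_{-i}K_\lambda(\cdot)a)(z),c\rangle.
\]
The first is immediate from $K_\lambda(z)=\Gamma(z)^*\Gamma(\lambda)$ (Proposition \ref{H-Gamma}), which yields $K_z(\lambda)^*=K_\lambda(z)$. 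The second follows from two applications of the reproducing property and the self-adjointness of the orthogonal projection $P_{-i}$: one has $\langle (P_{-i}K_z(\cdot)c)(\lambda),a\rangle=\langle P_{-i}K_z(\cdot)c,K_\lambda(\cdot)a\rangle=\langle K_z(\cdot)c,P_{-i}K_\lambda(\cdot)a\rangle=\overline{\langle (P_{-i}K_\lambda(\cdot)a)(z),c\rangle}$, and conjugating finishes it.

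The argument carries essentially no analytic content once arranged this way. The only care required is the routine verification that the functions $L K_\lambda(\cdot)a$ and $b\,(I-P_{-i})K_\lambda(\cdot)a$ genuinely lie in $\mathcal{H}(\Gamma)$, so that the passage among the three equivalent forms is legitimate — this is exactly what the division lemma above provides — together with separately recording the degenerate case $\lambda=i$. I expect the main (and rather minor) obstacle to be simply spotting the right reformulation: rewriting the stated kernel identity as the operator identity $L K_\lambda(\cdot)a=\overline{b(\lambda)}(I-P_{-i})K_\lambda(\cdot)a$, after which it is forced by the adjoint relation $L^*=R$ and the Hermitian symmetry of the reproducing kernel.
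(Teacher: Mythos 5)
Your proof is correct and is essentially the paper's argument read in mirror image: the paper computes $(L^{*}K_{\lambda}(\cdot)a)(z)$ and $(RK_{\lambda}(\cdot)a)(z)$ and equates them via Lemma \ref{RLnn}, whereas you recast the identity as $LK_{\lambda}(\cdot)a=\overline{b(\lambda)}\,(I-P_{-i})K_{\lambda}(\cdot)a$ and verify it by pairing against kernel vectors, using the same lemma $L^{*}=R$ together with the reproducing property and the Hermitian symmetry $K_{z}(\lambda)^{*}=K_{\lambda}(z)$. Same key ingredients and the same algebra, so this is the paper's approach.
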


\begin{proof}
First let us compute $(L^{*} K_{\lambda}(\cdot) a)(z)$ and $(R K_{\lambda}(\cdot) a)(z)$. Indeed, for any $f$,
\begin{align*}
\langle f, L^{*} K_{\lambda}(\cdot) a\rangle_{\mathcal{H}(\Gamma)} & = \langle L f, K_{\lambda}(\cdot) a\rangle_{\mathcal{H}(\Gamma)}\\
& = \left\langle \frac{1}{b} (f - P_{i} f), K_{\lambda}(\cdot) a\right\rangle_{\mathcal{H}(\Gamma)}\\
& = \left\langle \frac{f(\lambda) - (P_{i} f)(\lambda)}{b(\lambda)}, a \right\rangle_{\mathcal{K}}\\
& = \frac{1}{b(\lambda)} \left\langle f, K_{\lambda}(\cdot) a - P_{i} K_{\lambda}(\cdot) a\right\rangle_{\mathcal{H}(\Gamma)}
\end{align*}
This means that
$$(L^{*} K_{\lambda}(\cdot) a)(z) = \frac{1}{\overline{b(\lambda)}} (K_{\lambda}(z) a - (P_{i} K_{\lambda}(\cdot) a)(z)).$$

A similar computation yields
$$(R K_{\lambda}(\cdot) a)(z) = b(z) (K_{\lambda}(z) a - (P_{-i} K_{\lambda}(\cdot) a)(z)).$$
Using Lemma \eqref{RLnn} we equate the two previous formulas for $(L^{*} K_{\lambda}(\cdot) a)(z)$ and $(R K_{\lambda}(\cdot) a)(z)$ and then work the algebra to get the result.
\end{proof}

\begin{Remark}\label{epl}
Let us  pause for a moment to remember, since this will be important for what follows,  that by the definition of a model $\Gamma$ and the formula for the reproducing kernel $K_{\lambda}(z) = \Gamma(z)^{*} \Gamma(\lambda)$, the operator $K_{\lambda}(z): \mathcal{K} \to \mathcal{K}$  is invertible whenever $\lambda, z \in \C_{+}$ or $\lambda, z\in \C_{-}$.
\end{Remark}

\begin{Lemma} \label{choice-phi}
If
$$\Phi(z) := K_{i}(z) K_{i}(i)^{-1/2}, \quad \Psi(z) := K_{-i}(z) K_{-i}(-i)^{-1/2},$$ then for all $a \in \mathcal{K}$,
$$(P_i K_{\lambda}(\cdot) a)(z) = \Phi(z) \Phi(\lambda)^{*} a, \quad (P_{-i} K_{\lambda}(\cdot) a)(z) = \Psi(z) \Psi(\lambda)^{*} a.$$
\end{Lemma}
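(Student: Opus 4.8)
The plan is to identify $P_{\pm i}\mathcal{H}(\Gamma)$ as the subspace spanned by the kernel vectors at $\pm i$, write down the orthogonal projection onto it explicitly via the reproducing property, and then recognize the resulting expression as the advertised factorization through $\Phi$ and $\Psi$. First I would pin down the range of $P_{i}$. By \eqref{PH}, $P_{i}\mathcal{H}(\Gamma) = \bigvee\{K_{i}(\cdot)a : a \in \mathcal{K}\}$. Since $i,i \in \C_{+}$, Remark \ref{epl} gives that $K_{i}(i) = \Gamma(i)^{*}\Gamma(i)$ is a positive, boundedly invertible operator on $\mathcal{K}$; hence, by the reproducing identity \eqref{RKHS}, $\|K_{i}(\cdot)c\|_{\mathcal{H}(\Gamma)}^{2} = \langle K_{i}(i)c, c\rangle_{\mathcal{K}}$ is comparable to $\|c\|_{\mathcal{K}}^{2}$, so $c \mapsto K_{i}(\cdot)c$ has closed range and $P_{i}\mathcal{H}(\Gamma) = \{K_{i}(\cdot)c : c \in \mathcal{K}\}$ exactly, with no closure needed (this also settles the $n = \infty$ case). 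The same reasoning, using $-i \in \C_{-}$ and the invertibility of $K_{-i}(-i)$, applies to $P_{-i}$.

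Next I would derive the projection formula in general. Given $g \in \mathcal{H}(\Gamma)$, write $P_{i}g = K_{i}(\cdot)c$ for the unique $c \in \mathcal{K}$. Orthogonality of $g - P_{i}g$ to every $K_{i}(\cdot)b$, combined with the reproducing identity \eqref{RKHS} in the two forms $\langle g, K_{i}(\cdot)b\rangle_{\mathcal{H}(\Gamma)} = \langle g(i), b\rangle_{\mathcal{K}}$ and $\langle K_{i}(\cdot)c, K_{i}(\cdot)b\rangle_{\mathcal{H}(\Gamma)} = \langle K_{i}(i)c, b\rangle_{\mathcal{K}}$, forces $g(i) = K_{i}(i)c$, i.e. $c = K_{i}(i)^{-1}g(i)$. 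Therefore $(P_{i}g)(z) = K_{i}(z)K_{i}(i)^{-1}g(i)$, and likewise $(P_{-i}g)(z) = K_{-i}(z)K_{-i}(-i)^{-1}g(-i)$.

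Finally I would specialize to $g = K_{\lambda}(\cdot)a$, so that $g(i) = K_{\lambda}(i)a$ and $(P_{i}K_{\lambda}(\cdot)a)(z) = K_{i}(z)K_{i}(i)^{-1}K_{\lambda}(i)a$. Since $K_{\lambda}(i) = \Gamma(i)^{*}\Gamma(\lambda) = (\Gamma(\lambda)^{*}\Gamma(i))^{*} = K_{i}(\lambda)^{*}$, and since $K_{i}(i)^{-1} = K_{i}(i)^{-1/2}\bigl(K_{i}(i)^{-1/2}\bigr)^{*}$ (the inverse square root of a positive operator being self-adjoint), this equals $K_{i}(z)K_{i}(i)^{-1/2}\bigl(K_{i}(\lambda)K_{i}(i)^{-1/2}\bigr)^{*}a = \Phi(z)\Phi(\lambda)^{*}a$, which is the first claimed identity. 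The second identity follows verbatim with $K_{-i}$, $\Psi$, and the point $-i$ replacing $K_{i}$, $\Phi$, and the point $i$.

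The computation carries no real obstacle; the only points that genuinely need care are the positivity and bounded invertibility of $K_{\pm i}(\pm i)$ — which is precisely why the factorization is centered at $\pm i$ and is supplied by the model axioms together with Remark \ref{epl} — and, in the $n = \infty$ case, the verification that the span of the kernel vectors at $\pm i$ is already norm-closed, so that "$P_{\pm i}g$ is itself a kernel vector" holds literally rather than only after taking a closure.
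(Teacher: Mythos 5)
Your proof is correct and follows essentially the same route as the paper: identify $\Phi(\cdot)\Phi(\lambda)^{*}a$ (equivalently $K_i(\cdot)K_i(i)^{-1}K_\lambda(i)a$) as an element of $P_i\mathcal{H}(\Gamma)$ and verify the orthogonality of the difference via the reproducing identity together with the positivity, self-adjointness and invertibility of $K_{\pm i}(\pm i)$. Your extra observations — the general formula $(P_ig)(z)=K_i(z)K_i(i)^{-1}g(i)$ and the closedness of the span of kernel vectors at $\pm i$ when $n=\infty$ — are welcome details that the paper leaves as ``routinely verified,'' but they do not change the argument.
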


\begin{proof}
For $z, \lambda \in \C \setminus \R$ and $a \in \mathcal{K}$ notice that
\begin{align*}
\Phi(z) \Phi(\lambda)^{*} a & = K_{i}(z) K_{i}(i)^{-1/2} K_{i}(i)^{-1/2} K_{\lambda}(i) a\\
& = K_{i}(z) K_{i}(i)^{-1} K_{\lambda}(i) a
\end{align*}
which, by \eqref{PH} belongs to $P_i \mathcal{H}(\Gamma)$. To finish the proof we need to show that
$$\langle K_{\lambda}(\cdot) a - \Phi(\cdot) \Phi(\lambda)^{*} a, \Phi(\cdot) \Phi(\lambda)^{*} b\rangle_{\mathcal{K}} = 0, \quad \forall b \in \mathcal{K}.$$ This can be routinely verified with the definition of the reproducing kernel as well as the fact that $K_{i}(i)$ is self-adjoint and invertible.
\end{proof}

\begin{Lemma} \label{L-inv}
For each $z \in \C_{+}$,  the operator $V(z): \mathcal{K} \to \mathcal{K}$ defined by $$V(z) := b(z) \Phi(z)^{-1} \Psi(z)$$ is  a strict contraction.
\end{Lemma}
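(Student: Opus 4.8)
The plan is to read off a closed form for the kernel from the two preceding lemmas and then restrict it to the diagonal $z=\lambda$. Substituting the identities $(P_{i}K_{\lambda}(\cdot)a)(z)=\Phi(z)\Phi(\lambda)^{*}a$ and $(P_{-i}K_{\lambda}(\cdot)a)(z)=\Psi(z)\Psi(\lambda)^{*}a$ of Lemma~\ref{choice-phi} into the formula $K_{\lambda}(z)a=\bigl((P_{i}K_{\lambda}(\cdot)a)(z)-\overline{b(\lambda)}b(z)(P_{-i}K_{\lambda}(\cdot)a)(z)\bigr)/(1-\overline{b(\lambda)}b(z))$ and clearing the denominator gives, for all $z,\lambda\in\C\setminus\R$,
\[
K_{\lambda}(z)\bigl(1-\overline{b(\lambda)}b(z)\bigr)=\Phi(z)\Phi(\lambda)^{*}-\overline{b(\lambda)}b(z)\,\Psi(z)\Psi(\lambda)^{*}.
\]

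Next I would set $z=\lambda\in\C_{+}$. Since $b$ is a Blaschke factor it maps $\C_{+}$ into $\mathbb{D}$, so $1-|b(\lambda)|^{2}>0$, and the identity becomes
\[
\Phi(\lambda)\Phi(\lambda)^{*}=|b(\lambda)|^{2}\,\Psi(\lambda)\Psi(\lambda)^{*}+\bigl(1-|b(\lambda)|^{2}\bigr)K_{\lambda}(\lambda).
\]
Two facts are needed here. First, $K_{\lambda}(\lambda)=\Gamma(\lambda)^{*}\Gamma(\lambda)$ is positive and invertible: $\Gamma(\lambda)$ is a bounded bijection of $\mathcal{K}$ onto $\mbox{Rng}(T-\lambda I)^{\perp}$, hence bounded below, so $K_{\lambda}(\lambda)\geqslant cI$ for some $c>0$. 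Second, for $\lambda\in\C_{+}$ the operator $\Phi(\lambda)=K_{i}(\lambda)K_{i}(i)^{-1/2}$ is invertible: $K_{i}(i)=\Gamma(i)^{*}\Gamma(i)$ is positive and invertible so $K_{i}(i)^{-1/2}$ exists and is invertible, while $K_{i}(\lambda)$ is invertible by the model property \eqref{equation:condition} recalled in Remark~\ref{epl}, since both $i$ and $\lambda$ lie in $\C_{+}$.

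Finally I would conjugate the last displayed identity by $\Phi(\lambda)^{-1}$. Writing $V(\lambda)V(\lambda)^{*}=|b(\lambda)|^{2}\,\Phi(\lambda)^{-1}\Psi(\lambda)\Psi(\lambda)^{*}\bigl(\Phi(\lambda)^{-1}\bigr)^{*}$ one obtains
\[
I-V(\lambda)V(\lambda)^{*}=\bigl(1-|b(\lambda)|^{2}\bigr)\,\Phi(\lambda)^{-1}K_{\lambda}(\lambda)\bigl(\Phi(\lambda)^{-1}\bigr)^{*}.
\]
The right-hand side is a strictly positive scalar times the congruence of the bounded-below positive operator $K_{\lambda}(\lambda)$ by the bounded invertible operator $\Phi(\lambda)^{-1}$, hence it is itself bounded below by a positive multiple of $I$. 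Equivalently $\|V(\lambda)V(\lambda)^{*}\|<1$, so $\|V(\lambda)\|<1$, which is the assertion.

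I do not anticipate a genuine obstacle, as all of the substantive work is already contained in Lemma~\ref{choice-phi} and the earlier kernel decomposition. The one point that must be handled with a little care is that mere positivity of $K_{\lambda}(\lambda)$ is not enough to force the operator norm of $V(\lambda)$ strictly below $1$; one needs $K_{\lambda}(\lambda)$ and $\Phi(\lambda)$ to be invertible (equivalently, $K_{\lambda}(\lambda)$ bounded below and $\Phi(\lambda)^{-1}$ bounded), which is exactly where the model condition \eqref{equation:condition} and the self-adjoint invertibility of $K_{i}(i)$ enter.
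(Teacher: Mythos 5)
Your proof is correct and follows essentially the same route as the paper: the diagonal identity $\Phi(\lambda)\Phi(\lambda)^{*}-|b(\lambda)|^{2}\Psi(\lambda)\Psi(\lambda)^{*}=(1-|b(\lambda)|^{2})K_{\lambda}(\lambda)$ together with the invertibility (bounded-below-ness) of $K_{\lambda}(\lambda)$ and $\Phi(\lambda)$ coming from condition \eqref{equation:condition}. The only difference is cosmetic: you conjugate to the exact operator identity $I-V(\lambda)V(\lambda)^{*}=(1-|b(\lambda)|^{2})\Phi(\lambda)^{-1}K_{\lambda}(\lambda)(\Phi(\lambda)^{-1})^{*}$ and read off $\|V(\lambda)\|<1$ from the right side being bounded below, which streamlines the paper's vector-inequality plus sequence-contradiction argument while still correctly handling $n=\infty$.
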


\begin{proof}
It suffices to prove that $\|V(z)^{*}\|  < 1$ for each fixed $z$. To this end, note that the above technical lemmas show that
$$K_{\lambda}(\lambda) = \frac{1}{1 - |b(\lambda)|^2} (\Phi(\lambda) \Phi(\lambda)^{*} - |b(\lambda)|^2 \Psi(\lambda) \Psi(\lambda)^{*}), \quad \lambda \in \C_{+}.$$
Use this formula along with the estimate
$$\langle K_{\lambda}(\lambda) a, a\rangle_{\mathcal{K}} \geqslant \epsilon(\lambda) \|a\|^2,$$
(from Remark \ref{epl}) to get, after re-arranging some terms,
$$\epsilon(\lambda) (1 - |b(\lambda|^2) \|a\|^2 \leqslant \|\Phi(\lambda)^{*} a\|^2 - |b(\lambda)|^2 \|\Psi(\lambda)^{*} a\|^2.$$ Re-arrange the terms from the previous line to see that
$$|b(\lambda)|^2 \|\Psi(\lambda)^{*} a\|^2 \leqslant \|\Phi(\lambda)^{*} a\|^2 - \epsilon(\lambda) (1 - |b(\lambda|^2) \|a\|^2.$$ Insert
$$a = (\Phi(\lambda)^{*})^{-1} x$$ into the previous inequality, along with the definition fo $V$,  to obtain
$$\|V(\lambda)^{*} x\|^2 \leqslant \|x\|^2 - \epsilon(\lambda) (1 - |b(\lambda)|^2) \|(\Phi(\lambda)^{*})^{-1} x\|^2.$$
From here we see that $\|V(\lambda)^{*} x\| < \|x\|$ for all $x$. Suppose $\|x_{n}\|  = 1$ with $\|V(\lambda)^{*} x_{n}\| \to 1$. The previous inequality will show that
$$ \|(\Phi(\lambda)^{*})^{-1} x_n\| \to 0$$ which will contradict the fact that $(\Phi(\lambda)^{*})^{-1}$ is an invertible operator and hence must be bounded below. Thus $\|V(\lambda)^{*}\| < 1$.
\end{proof}

\begin{Remark}
Using the fact that $|b(z)| > 1$ when $z \in \C_{-}$, one can run the above proof again to show that $\|V(z)\| > 1$ when $z \in \C_{-}$. Note that $V(z)$ might have a pole when $z \in \C_{-}$. In fact if $V(z) = 0$ for some $z \in \C_{+}$ then $V$ will have a pole at $\overline{z}$ of the same order. See Remark \ref{zero-pole-V} below for more on this.
\end{Remark}

\begin{proof}[Proof of Theorem \ref{T-main-vector-kernel}] Since the statements of the theorem are contained in the above technical Lemmas, we just need to prove the formula for the kernel function. Indeed,
\begin{align*}
K_{\lambda}(z) & = \frac{\Phi(z) \Phi(\lambda)^{*} - b(z) \overline{b(\lambda)} \Psi(z) \Psi(\lambda)^{*}}{1 - b(z) \overline{b(\lambda)}}\\
& = \Phi(z) \left( \frac{\Phi(\lambda)^{*} - b(z) \overline{b(\lambda)} \Phi(z)^{-1} \Psi(z) \Psi(\lambda)^{*}}{1 - b(z) \overline{b(\lambda)}}\right)\\
& =  \Phi(z) \left( \frac{I - b(z) \overline{b(\lambda)} \Phi(z)^{-1} \Psi(z) \Psi(\lambda)^{*}(\Phi(\lambda)^{*})^{-1}}{1 - b(z) \overline{b(\lambda)}}\right) \Phi(\lambda)^{*}\\
& =  \Phi(z) \left( \frac{I - (b(z)  \Phi(z)^{-1} \Psi(z)) (\overline{b(\lambda)}\Psi(\lambda)^{*}(\Phi(\lambda)^{*})^{-1})}{1 - b(z) \overline{b(\lambda)}}\right) \Phi(\lambda)^{*}\\
& = \Phi(z) \left(\frac{I - V(z) V(\lambda)^{*}}{1 - b(z) \overline{b(\lambda)}}\right) \Phi(\lambda)^{*}.
\end{align*}
This completes the proof.
\end{proof}

The proof of Corollary \ref{Cor-main-Liv-ue} requires another technical lemma.

\begin{Lemma} \label{multiplier-matrx}
Suppose $M_{\Gamma_1} \cong M_{\Gamma_2}$ via the unitary operator $U: \mathcal{H}_1(\Gamma_1) \to \mathcal{H}_2(\Gamma_2)$. Then there is an analytic operator-valued function $W$ on $\C \setminus \R$ such that
$$(U f)(\lambda) = W(\lambda) f(\lambda), \quad f \in \mathcal{H}_1(\Gamma_1), \quad \lambda \in \C \setminus \R.$$
\end{Lemma}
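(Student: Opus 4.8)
\emph{Proof sketch (proposal).} The plan is to use the fact that $U$ conjugates multiplication by the independent variable on $\mathcal{H}_1(\Gamma_1)$ to the same operation on $\mathcal{H}_2(\Gamma_2)$, which forces $U$ to carry the ``deficiency fibers'' of one reproducing kernel Hilbert space onto those of the other, and then to read off $W(\lambda)$ from the action of $U^{*}$ on the reproducing kernel vectors at the point $\lambda$.

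\emph{Step 1 (transport of deficiency subspaces).} From $M_{\Gamma_1} \cong M_{\Gamma_2}$ via $U$ we get $U(M_{\Gamma_1} - \lambda I) = (M_{\Gamma_2} - \lambda I)U$ on $\mathscr{D}(M_{\Gamma_1})$ for every $\lambda$; for $\lambda \in \C \setminus \R$ both operators are closed and bounded below, hence have closed range, so $U\,\mbox{Rng}(M_{\Gamma_1} - \lambda I)^{\perp} = \mbox{Rng}(M_{\Gamma_2} - \lambda I)^{\perp}$, and symmetrically for $U^{*}$. Using the reproducing property (as recorded in \eqref{PH} for the points $\pm i$) together with the equivalence \eqref{Gamma-star-0}, one checks that $\mbox{Rng}(M_{\Gamma_j} - \lambda I)^{\perp} = \{K^{(j)}_{\lambda}(\cdot)a : a \in \mathcal{K}\}$ for every $\lambda \in \C \setminus \R$, and that $a \mapsto K^{(j)}_{\lambda}(\cdot)a$ is a bounded bijection of $\mathcal{K}$ onto this subspace (it is unitarily equivalent to the invertible operator $\Gamma_j(\lambda)$).

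\emph{Step 2 (pointwise definition of $W$).} Fix $\lambda \in \C \setminus \R$. For $a \in \mathcal{K}$ the vector $U^{*}K^{(2)}_{\lambda}(\cdot)a$ lies in $\mbox{Rng}(M_{\Gamma_1} - \lambda I)^{\perp}$, so by Step 1 there is a unique $G(\lambda)a \in \mathcal{K}$ with $U^{*}K^{(2)}_{\lambda}(\cdot)a = K^{(1)}_{\lambda}(\cdot)G(\lambda)a$; the operator $G(\lambda)$ on $\mathcal{K}$ is bounded, being a composition of bounded maps. For $f \in \mathcal{H}_1(\Gamma_1)$ and $a \in \mathcal{K}$ the reproducing property gives
\[
\langle (Uf)(\lambda), a\rangle_{\mathcal{K}} = \langle Uf, K^{(2)}_{\lambda}(\cdot)a\rangle = \langle f, K^{(1)}_{\lambda}(\cdot)G(\lambda)a\rangle = \langle f(\lambda), G(\lambda)a\rangle_{\mathcal{K}},
\]
so $(Uf)(\lambda) = W(\lambda)f(\lambda)$ with $W(\lambda) := G(\lambda)^{*}$. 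Steps 1 and 2 are routine bookkeeping.

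\emph{Step 3 (analyticity of $W$ --- the main point).} The identity $(Uf)(\lambda) = W(\lambda)f(\lambda)$ by itself does not let one ``solve for'' $W(\lambda)$, since $f(\lambda)$ is a single vector; the device is to replace $f(\lambda)$ by an \emph{invertible} quantity. Work on $\C_{+}$ (the argument on $\C_{-}$ is identical with $-i$ in place of $i$). By Remark \ref{epl}, $K^{(1)}_{i}(\lambda)$ is invertible for every $\lambda \in \C_{+}$, and since the members of $\mathcal{H}_1(\Gamma_1)$ are analytic, $\lambda \mapsto K^{(1)}_{i}(\lambda)$ is an analytic $\mathcal{B}(\mathcal{K})$-valued function on $\C_{+}$, whence so is $\lambda \mapsto K^{(1)}_{i}(\lambda)^{-1}$. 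On the other hand, for each $a \in \mathcal{K}$ the function $\lambda \mapsto W(\lambda)K^{(1)}_{i}(\lambda)a = \big(U(K^{(1)}_{i}(\cdot)a)\big)(\lambda)$ is analytic on $\C \setminus \R$, because $U(K^{(1)}_{i}(\cdot)a)$ lies in $\mathcal{H}_2(\Gamma_2)$, a space of analytic $\mathcal{K}$-valued functions; hence $\lambda \mapsto W(\lambda)K^{(1)}_{i}(\lambda)$ is strongly analytic on $\C_{+}$ and therefore analytic in operator norm. Multiplying by $K^{(1)}_{i}(\lambda)^{-1}$ shows $W$ is analytic on $\C_{+}$, and likewise on $\C_{-}$. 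I expect the only real obstacle to be making Step 3 airtight --- namely, justifying that $\lambda \mapsto K^{(1)}_{i}(\lambda)^{-1}$ is analytic and that strong analyticity upgrades to norm analyticity --- while Steps 1 and 2 should go through mechanically.
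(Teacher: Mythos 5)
Your proposal is correct and follows essentially the same route as the paper: transport the deficiency subspaces $\mbox{Rng}(M_{\Gamma_j}-\lambda I)^{\perp}$ through the intertwining unitary and read off $W(\lambda)$ as the adjoint of the resulting invertible operator on $\mathcal{K}$ acting on kernel vectors (your $G(\lambda)$ is the paper's $J(\lambda)^{-1}$, obtained via $U^{*}$ rather than $U$). Your Step 3 in fact supplies a verification of the analyticity of $W$, via the invertibility of $K^{(1)}_{\pm i}(\lambda)$ from Remark \ref{epl}, which the paper's own proof leaves implicit.
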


\begin{proof}
For any $f \in \mathscr{D}(M_1)$ and $g \in \mathcal{H}_1(\Gamma_1)$ we have
$$\langle (M_{\Gamma_1} - \lambda I) f, g \rangle_{\mathcal{H}_1(\Gamma_1)} = \langle (M_{\Gamma_2} - \lambda) U f, U g \rangle_{\mathcal{H}_{2}(\Gamma_2)}. \label{ker-equation}$$
Thus $g \in \mbox{Rng}(M_{\Gamma_1} - \lambda I)^{\perp} \Leftrightarrow U g  \in \mbox{Rng}(M_{\Gamma_2} - \lambda I)^{\perp}$ and so $U$ maps $ \mbox{Rng}(M_{\Gamma_1} - \lambda I)^{\perp}$ onto $\mbox{Rng}(M_{\Gamma_2} - \lambda I)^{\perp}$.

By the identity
$$\bigvee \{K^{j}_{\lambda}(\cdot) a: a \in \mathcal{K}\} = \mbox{Rng}(M_{\Gamma_j} - \lambda I)^{\perp}, \quad j = 1, 2, $$
we see, for every $a \in \mathcal{K}$,  that $U K^{1}_{\lambda}(z) a \in \mbox{Rng}(M_2 - \lambda I)^{\perp}$ and so there exists an invertible operator $J(\lambda): \mathcal{K} \to \mathcal{K}$ with $U K^{1}_{\lambda}(\cdot) a = K^{2}_{\lambda}(\cdot) J(\lambda) a$. Then for any $f \in \mathcal{H}_1(\Gamma_1), a \in \mathcal{K},  \lambda \in \C \setminus \R$ we have
\begin{align*}
\langle f(\lambda), a\rangle_{\mathcal{K}} & = \langle f, K^{1}_{\lambda}(\cdot) a\rangle_{\mathcal{H}_1(\Gamma_1)}\\
& = \langle U f, U K^{1}_{\lambda}(\cdot) a\rangle_{\mathcal{H}_2(\Gamma_2)}\\
& = \langle U f, K^{2}_{\lambda}(\cdot) J(\lambda) a\rangle_{\mathcal{H}_{2}(\Gamma_2)}.
\end{align*}
Now let $a = J(\lambda)^{-1} b$ to get
\begin{align*}
\langle (U f)(\lambda), a\rangle_{\mathcal{K}} & = \langle f(\lambda), J(\lambda)^{-1}(b)\rangle_{\mathcal{K}}\\
& = \langle (J(\lambda)^{-1})^{*} f(\lambda), b\rangle_{\mathcal{K}}.
\end{align*}
If we now set $W(\lambda) = (J(\lambda)^{-1})^{*}$ then
$$(Uf)(\lambda) = W(\lambda) f(\lambda),$$
which completes our proof.
\end{proof}

\begin{Remark} \label{zero-pole-V}
Observe that the denominator in the above formula for $K_{\lambda}(z)$ in Theorem \ref{T-main-vector-kernel} vanishes when $z = \overline{\lambda}$ and thus the numerator must also vanish. This shows
$$V(z) V(\overline{z})^{*} = I, \quad z \in \C \setminus \R.$$
When $V$ has a zero or pole at $z$ the above formula must be interpreted in the usual way (poles cancel out the zeros).
\end{Remark}

\begin{proof}[Proof of Corollary \ref{Cor-main-Liv-ue}]
Suppose there are constant unitary matrices $Q$ and $R$ so that $V_1(z) = R V_{2}(z) Q$ for all $z \in \C_{+}$.  Using the fact that $V(z) V(\overline{z})^{*} = I$ for all $z \in \C \setminus \R$ we see that $V_1(z) = R V_{2}(z) Q$ for all $z \in \C \setminus \R$.

First let us relate the two kernel functions $K^{1}_{\lambda}(z)$ and $K^{2}_{\lambda}(z)$ for the spaces $\mathcal{H}_1(\Gamma_1)$ and $\mathcal{H}_2(\Gamma_2)$. Indeed,
\begin{align*}
K^{1}_{\lambda}(z) & = \Phi_{1}(z) \left( \frac{I - V_{1}(z) V_{1}(\lambda)^{*}}{1 - b(z) \overline{b(\lambda)}}\right) \Phi_{1}(\lambda)^{*}\\
& = \Phi_{1}(z) \left( \frac{I - R V_2(z) Q Q^{*} V_{2}(\lambda)^{*} R^{*}}{1 - b(z) \overline{b(\lambda)}}\right) \Phi_1(\lambda)^{*}\\
& = \Phi_{1}(z) \left( \frac{R R^{*} - R V_2(z) V_{2}(\lambda)^{*} R^{*}}{1 - b(z) \overline{b(\lambda)}}\right) \Phi_1(\lambda)^{*}\\
& = \Phi_1(z) R \left( \frac{I -  V_2(z) V_{2}(\lambda)^{*}}{1 - b(z) \overline{b(\lambda)}}\right)  R^{*} \Phi_{1}(z)^{*}\\
& = (\Phi_1(z) R) \left( \frac{I -  V_2(z) V_{2}(\lambda)^{*}}{1 - b(z) \overline{b(\lambda)}}\right) (\Phi_{1}(\lambda) R)^{*}.
\end{align*}
Work with the identity
$$b\Phi_{1}^{-1} \Psi_1 = R b \Phi_{2}^{-1} \Psi_{2} Q$$ to get
$$\Phi_{1} R = \Psi_{1} Q^{*} \Psi_{2}^{-1} \Phi_2.$$
Plug this into the above calculation for $K^{1}_{\lambda}(z)$ to see that
\begin{align*}
K^{1}_{\lambda}(z) & = (\Phi_{1}(z) R) \left( \frac{I -  V_2(z) V_{2}(\lambda)^{*}}{1 - b(z) \overline{b(\lambda)}}\right) (\Phi_{1}(\lambda) R)^{*}\\
& = (\Psi_{1}(z) Q^{*} \Psi_{2}(z)^{-1} \Phi_2(z))   \left( \frac{I -  V_2(z) V_{2}(\lambda)^{*}}{1 - b(z) \overline{b(\lambda)}}\right) (\Psi_{1}(\lambda) Q^{*} \Psi_{2}(\lambda)^{-1} \Phi_2(\lambda))^{*}\\
& = (\Psi_{1}(z) Q^{*} \Psi_{2}^{-1}(z)) K^{2}_{\lambda}(z) (\Psi_{1}(\lambda) Q^{*} \Psi_{2}^{-1}(\lambda))^{*}\\
& = G(z) K^{2}_{\lambda}(z) G(\lambda)^{*},
\end{align*}
where
$$G(z) = \Psi_{1}(z) Q^{*} \Psi_{2}^{-1}(z).$$

Notice that
\begin{equation}
\bigvee\{K^{j}_{\lambda}(\cdot) a: a \in \mathcal{K}\} = \mbox{Rng}(M_{\Gamma_j} - \lambda I)^{\perp} = \mbox{Ker} (M_{\Gamma_j}^{*} - \overline{\lambda} I)
\end{equation}
 and
$$\bigvee\{\mbox{Ker} (M_{\Gamma_j}^{*} - \overline{\lambda} I) : \lambda \in \C \setminus \R\} = \mathcal{H}_j(\Gamma_j).$$
Thus we can define the operator
$U: \mathcal{H}_{1}(\Gamma_1) \to \mathcal{H}_2(\Gamma_2)$ first as
$$U K^{1}_{\lambda}(\cdot) a = K^{2}_{\lambda}(\cdot) G(\lambda)^{*} a, \quad a \in \mathcal{K}, \lambda \in \C \setminus \R,$$ and then extend linearly.
We have the following computation
\begin{align*}
\langle U K^{1}_{\lambda}(\cdot) a, U K^{1}_{\eta}(\cdot) b\rangle_{\mathcal{H}_2(\Gamma_2)} & =
\langle K^{2}_{\lambda}(\cdot) G(\lambda)^{*} a, K^{2}_{\eta}(\cdot) G(\eta)^{*} b\rangle_{\mathcal{H}_2(\Gamma_2)}\\
& = \langle K^{2}_{\lambda}(\eta) G(\lambda)^{*} a, G(\eta)^{*} b\rangle_{\mathcal{K}}\\
& = \langle G(\eta) K^{2}_{\lambda}(\eta) G(\lambda)^{*} a, b\rangle_{\mathcal{K}}\\
& = \langle K^{1}_{\lambda}(\eta) a, b\rangle_{\mathcal{K}}\\
& = \langle K^{1}_{\lambda}(\cdot) a, K^{1}_{\eta}(\cdot) b\rangle_{\mathcal{K}}.
\end{align*}
This says that $U$ is a unitary operator. For $f \in \mathcal{H}_{2}(\Gamma_2)$ we have
\begin{align*}
\langle (U^{*} f)(\lambda), a\rangle_{\mathcal{K}} & = \langle U^{*} f, K^{1}_{\lambda}(\cdot) a\rangle_{\mathcal{H}_1(\Gamma_1)}\\
& = \langle f, U K^{1}_{\lambda}(\cdot) a \rangle_{\mathcal{H}_{2}(\Gamma_2)}\\
& = \langle f(\lambda), G(\lambda)^{*} a\rangle_{\mathcal{K}}\\
& = \langle G(\lambda) f(\lambda), a\rangle_{\mathcal{K}}.
\end{align*}
Thus $(U^{*} f)(\lambda) = G(\lambda) f(\lambda)$ and $M_{\Gamma_1}$ is unitarily equivalent to $M_{\Gamma_2}$ via the unitary $U$.
We have just shown that $V_1 = R V_2 Q$ implies $M_{\Gamma_1} \cong M_{\Gamma_2}$.

So now suppose that $M_{\Gamma_1} \cong M_{\Gamma_2}$ via a unitary operator $U: \mathcal{H}_{1}(\Gamma_1) \to \mathcal{H}_{2}(\Gamma_2)$. Then by Lemma \ref{multiplier-matrx} there is an analytic operator-valued function $W$ so that $U f = W f$. Furthermore since $M_{W}$ (multiplication by $W$) takes, for each fixed $\lambda \in \C \setminus \R$,  $\mbox{Rng}(M_{\Gamma_1} - \lambda I)^{\perp}$ onto $\mbox{Rng}(M_{\Gamma_2} - \lambda I)^{\perp}$, we get that for each $a \in \mathcal{K}$,
$$M_{W} K^{1}_{\lambda}(\cdot) K^{1}_{\lambda}(\lambda)^{-1/2} a = K^{2}_{\lambda}(\cdot) K^{2}_{\lambda}(\lambda)^{-1/2} R(\lambda)a$$ for some invertible linear operator $R(\lambda): \mathcal{K} \to \mathcal{K}$.
Observe that for any $a, b \in \mathcal{K}$,
\begin{align*}
& \langle M_{W} K^{1}_{\lambda}(\cdot) K^{1}_{\lambda}(\lambda)^{-1/2} a, M_{W} K^{1}_{\lambda}(\cdot) K^{1}_{\lambda}(\lambda) b\rangle_{\mathcal{H}_{2}(\Gamma_2)}\\
 & = \langle K^{1}_{\lambda}(\cdot) K^{1}_{\lambda}(\lambda)^{-1/2} a, K^{1}_{\lambda}(\cdot) K^{1}_{\lambda}(\lambda) b\rangle_{\mathcal{H}_{1}(\Gamma_1)}\\
 & = \langle K^{1}_{\lambda}(\lambda) K^{1}_{\lambda}(\lambda)^{-1/2} a, K^{1}_{\lambda}(\lambda)^{-1/2} b\rangle_{\mathcal{K}}\\
 & = \langle a, b \rangle_{\mathcal{K}}.
\end{align*}
On the other hand,
\begin{align*}
& \langle M_{W} K^{1}_{\lambda}(\cdot) K^{1}_{\lambda}(\lambda)^{-1/2} a, M_{W} K^{1}_{\lambda}(\cdot) K^{1}_{\lambda}(\lambda) b\rangle_{\mathcal{H}_{2}(\Gamma_2)}\\
& = \langle K^{2}_{\lambda}(\cdot) K^{2}_{\lambda}(\lambda)^{-1/2} R(\lambda)a, K^{2}_{\lambda}(\cdot) K^{2}_{\lambda}(\lambda)^{-1/2} R(\lambda)b \rangle_{\mathcal{H}_{2}(\Gamma_2)}\\
& = \langle R(\lambda) a, R(\lambda) b \rangle_{\mathcal{K}}.
\end{align*}
This implies that $R(\lambda): \mathcal{K} \to \mathcal{K}$ is unitary for each $\lambda \in \C \setminus \R$.
We leave it to the reader to check that the following two identities
$$W(z) K^{1}_{\lambda}(z) K^{1}_{i}(i)^{-1/2}  = K^{2}_{i}(z) K^{2}_{i}(i)^{-1/2} R(i)$$
$$W(z) K^{1}_{\lambda}(z) K^{1}_{-i}(-i)^{-1/2}  = K^{2}_{-i}(z) K^{2}_{-i}(-i)^{-1/2} R(-i)$$
yield
$$R(i) V_1(z) = V_2(z) R(-i)$$ which completes the proof.
\end{proof}

We can now prove that the invariant $V$ from Lemma \ref{L-inv} and Livsic's $w_{T}$ from \eqref{intro-Livsic} are indeed equivalent.

\begin{proof}[Proof of Corollary \ref{VisW}]
Let $\Gamma$ be a model for $T$, and consider the model space $\mathcal{H}(\Gamma)$.
Hence there is an $n-$dimensional Hilbert space $\mathcal{K}$ such that $\Gamma (z) : \mathcal{K} \rightarrow \mbox{Rng}(T- z I ) ^\perp $ is bounded
and invertible for each $z \in \C \setminus \R$. Suppose that $\{ e_j \}_{j = 1}^{n}$ is a fixed orthonormal basis for $\mathcal{K}$.

From our earlier work, $T$ is unitarily equivalent to $M := M_{\Gamma}$ which acts (densely) as multiplication by $z$ on $\mathcal{H} (\Gamma  )$, and so, without loss of
generality, we assume that $T = M$ and $\mathcal{H} = \mathcal{H} (\Gamma )$. Then the Lisvic characteristic function for $T$ is
$$ w_T (z) := b(z) B(z) ^{-1} A(z),$$ where
$$B(z) = \left[ \langle (M' -i I)(M' -z I)^{-1} u_j, u_k \rangle \right] ,$$
$$A(z) = \left[ \langle (M' + i I)(M' -z I)^{-1} u_j, u_k \rangle \right] ,$$
$\{ u_k \}_{k = 1}^{n}$ is any orthonormal basis for $\mbox{Ker}(M^* - i I)$ and $M'$ is some fixed (canonical) self-adjoint extension of $M$.  Now since $\{ e_j \}_{j = 1}^{n}$ is orthonormal, we see that
$$u_j := K _{-i} (\cdot) K_{-i} (-i) ^{-1/2} e_j, \quad 1 \leqslant j \leqslant n,$$ forms an orthonormal basis for $\mbox{Ker}(M^* -i I)$. Indeed,
\begin{align*}
\langle u_j, u_k \rangle_{\mathcal{H}} & = \langle K_{-i}(\cdot) K_{-i}(-i)^{-1/2} e_j, K_{-i}(\cdot) K_{-i}(-i)^{-1/2} e_k\rangle_{\mathcal{H}}\\
& = \langle K_{-i}(-i) K_{-i}(-i)^{-1/2} e_j, K_{-i}(-i)^{-1/2} e_k \rangle_{\mathcal{K}}\\
& = \langle e_j, e_k \rangle_{\mathcal{K}}.
\end{align*}

 Thus we can assume, with at most creating an equivalent Livsic characteristic function,  the $u_j$ have this form. From our Krein trick in \eqref{K-form} we also know, for any $z \in \C \setminus \R$,  that
$$(M' -i I ) (M' -z I) ^{-1} : \mbox{Ker}(M^* -i I) \rightarrow \mbox{Ker}(M^* -z I)$$ is bounded and invertible, so that we can find
a bounded invertible operator $V_z : \mathcal{K} \rightarrow \mathcal{K} $ such that
$$(M' -i I ) (M' - \ov{z} I) ^{-1} u_j = K _{z} (\cdot ) V_z e_j.$$  Here we are using the fact that $\mbox{Ker}(M^* -\ov{z} I )$ is spanned
by the vectors $$K _z (\cdot) e_j, \quad 1 \leqslant j \leqslant n.$$
 Actually they form a Riesz basis which ensures that $V_z$ is bounded and invertible when $n=\infty$.

We can now compute $A(z)$ as
\begin{align*}
A(z)   = & \left[ \langle (M' + i I)(M' -z I)^{-1} u_j, u_k \rangle \right] \nonumber \\
 = &  \left[ \langle u_j, (M' - i I)(M' -\ov{z} I )^{-1} u_k \rangle \right] \nonumber \\
 = &  \left[ \langle K_{-i} (\cdot) K_{-i} (-i) ^{-1/2} e_j , K_z (\cdot) V_{z} e_k \rangle \right] ,
\end{align*}
and it follows that
$$A(z) = V_{z} ^* K_{-i} (z) K _{-i} (-i) ^{-1/2} = V_z ^* \Psi (z) .$$
Similarly,
\begin{align*}
B(z) & =  \left[ \langle (M' - i I)(M' -z I )^{-1} u_j, u_k \rangle \right] \nonumber \\
& = \left[ \langle  (M' -iI) (M' +iI) ^{-1}  u_j,  (M' -i I ) (M' - \ov{z} I ) ^{-1}  u_k \rangle \right]  \nonumber \\
&  =  \left[ \langle K_i (\cdot) K_i (i) ^{-1/2} U e_j ,  K _z (\cdot ) V_z e_k \rangle \right].
\end{align*}
Here we have that $U : \mathcal{K} \rightarrow \mathcal{K}$ is some fixed unitary operator. The existence of $U$ follows from the facts that
$(M' -i) (M' +i) ^{-1}$ is unitary and that for any orthonormal basis $\{ b_j \}_{j = 1}^{n}$ of $\mathcal{K}$, $K_i (\cdot )K_i (i) ^{-1/2} b_j$ is an orthonormal
basis of $\mbox{Ker} (M^* +iI)$.
This shows that $B(z) = V_z ^* K_i (z) K _i (i) ^{-1/2} U = V_z ^*  \Phi (z) U$.
Hence
$$ w_T (z) = b(z) B(z) ^{-1} A(z) = U^* b(z) \Phi (z) ^{-1}  ( V_z ^* ) ^{-1} V_z ^* \Psi (z) = U^* V_T (z) .$$
Since $U$ is a constant unitary matrix we conclude that $V_T$ and $w_T$ are equivalent.

\end{proof}

\section{Computing the characteristic function}

Let us compute the characteristic functions $V$ for the examples mentioned earlier. After discussing Herglotz spaces we will use these computations to make come interesting connections to these operators to vector-valued deBranges-Rovnyak spaces.

\subsection{Differentiaton}
Let us return to the differentiation example $Tf = i f'$ on $L^2[-\pi, \pi]$ with domain $\{f \in L^2[-\pi, \pi]: f(-\pi) = f(\pi) = 0\}$ from Example \ref{diff-ex}. We saw that the corresponding Hilbert space of analytic functions on $\C \setminus \R$ was the Paley-Wiener (type) space with reproducing kernel
$$K_{\lambda}(z) = 2 \frac{\sin \pi(z - \overline{\lambda})}{z - \overline{\lambda}}.$$
A computation will show that
$$K_{i}(i) = K_{-i}(-i) = \sinh \pi$$ and thus
the Livsic function $V(z)$ is
$$V(z) = \left(\frac{z - i}{z + i}\right) \frac{\sin \pi (z - i)}{\sin\pi (z + i)}, \quad z \in \C_{+}.$$
Notice how $|V(z)| < 1$ on $\C_{+}$ with zeros $\{i + \pi n: n \in \mathbb{Z}\}$ and $|V(z)| > 1$ on $\C_{-}$ with poles $\{-i + \pi n: n \in \mathbb{Z}\}$. A computation will show that $|V(x)| = 1$ for $x \in \R$ and so $V$ is inner on $\C_{+}$. This will be important later on.

\subsection{Double differentiation}
For the operator  $T f = -f''$ discussed in Example \ref{double-diff}, the kernel function $K_{\lambda}(z)$ was computed to be
$$K_{\lambda}(z) = \frac{-i}{\epsilon_{z} \sqrt{z} - \epsilon_{\lambda} (\overline{\lambda} ) ^{\frac{1}{2}}},$$
where $\epsilon_{\lambda} =  1$ if $\Im \lambda > 0$ and $\epsilon_{\lambda} = -1$ if $\Im \lambda < 0$.
In this example,
$$K_{i}(i) = K_{-i}(-i) = \frac{1}{\sqrt{2}}$$ and so
$$V(z) =\left( \frac{z - i}{z + i}\right) \frac{\sqrt{z} - \frac{1-i}{\sqrt{2}}}{\sqrt{z} + \frac{1+i}{\sqrt{2}}}, \quad z \in \C_{+}.$$
One needs to be careful when computing $V(z)$ for $\Im z < 0$ since $\epsilon_z = -1$ and so $\sqrt{z}$ changes to a $-\sqrt{z}$ in the above formula for $V(z)$.
A computation will show that $|V(x)| = 1$ for $x < 0$ and so, although $V$ is not inner on $\C_{+}$, it is an extreme function for the unit ball of $H^{\infty}(\C_{+})$ -- which will become important later when we discuss deBranges-Rovnyak spaces.

\subsection{Sturm-Liouville operators}
This example is a continuation of Example \ref{SL-ex}. We will assume that $I$ is a closed finite interval such that $1/p , q \in L^1 (I)$. Recall that in this case the operators $H(p,q,I)$ which act as
\[ H(p,q,I) f = -(pf')' + qf, \] for all $f \in \mathscr{D} (H (p,q,I)) \subset L^2 (I)$ are closed simple symmetric densely defined operators with indices $(2,2)$.

In this case where $I$ is finite and $1/p, q \in L^1 (I)$, $H (p,q,I)$ is called a regular second-order Sturm-Liouville differential operator,  and it is known that $H(p,q,I) -xI$ is bounded below for
any $x \in \mathbb{R}$, so that every $x \in \mathbb{R}$ is a regular value of $H(p,q,I)$. Recall that any symmetric operator with this property is called regular.

Here is a brief sketch of a proof that $H(p,q,I)$ is regular: It can be proven that the domain of $H(p,q,I)$ is the set of all $f \in \mathscr{D} ( H(p,q,I) ^* )$ such that
both $f(a) = 0 = f(b)$ and $p(a) f'(a) = 0 = p(b) f'(b)$ \cite[Lemma 1, Section 17.3]{Naimark}. Hence if $x \in \mathbb{R}$ was an eigenvalue of $H(p,q,I)$ with corresponding eigenfunction $f \in \mathscr{D} (H(p,q,I))$, $f$ would be a solution to the ordinary differential
equation:  \[ -(pf')' +qf  = x f, \] which obeys the boundary conditions $f(a) = 0 $ and $p(a) f'(a) =0$. The existence-uniqueness theorem for ordinary differential equations \cite[Theorem 2, Section 16.2]{Naimark} would then imply that $f=0$. This contradiction proves that $H(p,q,I)$ has no eigenvalues. Now by \cite[Theorem 1, Section 19.2]{Naimark}, the resolvent $ (H - z I) ^{-1}$, where $z \in \mathbb{C} \setminus \mathbb{R}$ and $H$ is any fixed
self-adjoint extension of $H(p,q,I)$, is a compact Hilbert-Schmidt integral operator. It follows that the spectrum of any self-adjoint extension $H$ of $H(p,q,I)$ is a discrete sequence of eigenvalues with no finite accumulation point, and $H$ has no finite essential spectrum. If for some $x \in \mathbb{R}$, $H(p,q,I) -x I$ was not bounded below, then since $x$ cannot be an eigenvalue, it would have to belong to the essential spectrum of $H(p,q,I)$. It follows from \cite[Theorem 1, Section 83]{A-G} that $x$ would have to belong to the essential spectrum of every self-adjoint extension $H$ of $H(p,q,I)$, and this contradicts the fact that the essential spectrum of any such self-adjoint extension is empty. Note here that the Cayley transforms $b(H)$ and $b(H(p,q,I))$ of $H$ and $H(p,q,I)$ differ by a finite rank perturbation, so this also follows from the fact that any two bounded operators which differ by a compact perturbation have the same essential spectrum. In conclusion $H(p,q,I) -xI $ is bounded below for any $z \in \mathbb{C}$, and $H(p,q,I)$ is regular. Also note that any regular symmetric operator $T$ must also be simple, as if $T$ had a self-adjoint restriction $T_0$, then $T_0$ would have spectrum so that $T_0 -xI$ and hence $T-xI $ would not be bounded below for some $x \in \mathbb{R}$. This provides another proof that $H(p,q,I)$ is simple in this case.

Let $V_I$ denote the characteristic function of $H(p,q,I)$. By a result of Livsic, \cite[Theorem 4]{Livsic-2}, since every $x \in \mathbb{R}$ is a regular point of $H(p,q, I)$, it follows that $V_I$ is a $2 \times 2$ matrix-valued inner function which has an analytic extension to a neighborhood of $\mathbb{R}$. To actually compute this inner characteristic function $V_I$, for any $z \in \mathbb{C}$, let $u_z , v_z$ be the entire $L^2 (I) -$valued functions spanning $\mbox{Ker} (H(p,q,I)^* -z I ) $ discussed in Example \ref{SL-ex}.
As in Example \ref{SL-ex}, if we define
$$\gamma _1 (z) = u_z, \quad \gamma _2 (z) = v_z$$
and
$$\Gamma _I (z) = \gamma_1 (\overline{z} ) \otimes e_1 + \gamma _2 (\overline{z} ) \otimes e_2,$$
then $\Gamma: \C \setminus \R \rightarrow \mbox{Ker} (H (p,q,I) ^* -\overline{z} I)$
is a model for $H (p,q,I)$ and $\mathcal{H} (\Gamma _I)$ has reproducing kernel: \[ K ^I _\lambda (z) = \left( \begin{array}{cc}  \int _I  u_{\overline{\lambda}} (x)  \overline{ u _{\overline{z}} (x) } dx    &
\int _I u _{\overline{\lambda}} (x)  \overline{ v_{\overline{z}} (x)} dx  \\  \int _I  v_{\overline{\lambda}} (x) \overline{ u _{\overline{z}} (x) } dx  &
\int _I  v _{\overline{\lambda}} (x)  , \overline{ v_{\overline{z}}  (x) } dx   \end{array}  \right). \] From this one can compute the characteristic function $V_I$ as
$$V_I (z) = b(z) \Phi ^I (z) ^{-1} \Psi ^I  (z)$$
where
$$b(z) = \frac{z-i}{z+i},$$
$$\Phi ^I  (z) = K_i ^I (z) K_i ^I (i) ^{-1/2},$$
and
$$\Psi ^I (z) = K_{-i} ^I (z) K_{-i} ^I (-i) ^{-1/2}.$$
Note that both $\Phi ^I (z)$ and $\Psi ^I (z)$ are entire matrix functions of $z \in \mathbb{C}$.

Now consider a larger interval $J \supset I$, and repeat the above arguments for the operator $H (p,q , J )$ acting on its dense domain in $L^2 (J)$. Note that $H(p,q,J) \supset H(p,q,I)$, \emph{i.e.}
$$\mathscr{D} (H(p,q,J) ) \supset \mathscr{D} (H(p,q,I)), \quad H(p,q,J) |\mathscr{D} (H(p,q,I)) = H(p,q,I).$$
 Observe that if $U_J : L^2 (J) \rightarrow \mathcal{H} (\Gamma _J ) $ is the isometry defined by
$$ (U_J f ) (z) = \Gamma _J (z) ^* f = \left( \langle f , u_{\overline{z}} \rangle _J , \langle f , v_{\overline{z}} \rangle _J \right),$$ that $U_J |L^2 (I) = U_I$ where $U_I$ is the corresponding
isometry of $L^2 (I) $ onto $\mathcal{H} (\Gamma _I)$ which takes $H(p,q,I)$ onto $M _I = U_I H(p,q,I) U_I ^*$, the symmetric operator of multiplication by $z$ in $\mathcal{H} (\Gamma _I )$. This shows
that $\mathcal{H} (\Gamma _I )$ is a closed subspace of $\mathcal{H} (\Gamma _J )$, and that if $M _J = U_J H(p,q, J) U_J ^*$
is the corresponding operator of multiplication by $z$ in $\mathcal{H} (\Gamma _J )$ then $M_I \subset M _J$. Since the characteristic functions $V_I$ and $V_J$ are inner, it will further follow from Theorem \ref{multiplier} of the last section of this paper
that multiplication by $\frac{\Phi _I }{\Phi _J}$, where $\Phi _I (z) = K ^I _i (z) K^I _i (i) ^{-1/2}$, is an isometric multiplier from the model subspace $\mathscr{K} _I := H^2 _{\mathbb{C} ^2} \ominus V_I H^2 _{\mathbb{C} ^2}$
into $\mathscr{K} _J$. These observations seem to be connected to the results of \cite{Remling}, and although we will not pursue this further here, it would be interesting to investigate this in a future paper.

\subsection{Multiplication by the independent variable}
For the example of $M_{\mu}$, the restriction of $M^{\mu}$, multiplication by the independent variable on $L^2(\mu)$,  to
$$\left\{f \in L^2(\mu): x f \in L^2(\mu), \int f d \mu = 0\right\},$$ recall that the reproducing kernel for the corresponding Hilbert space of analytic functions on $\C \setminus \R$ (the Cauchy transforms of $L^2(\mu)$ functions) is
$$K_{\lambda}(z) = \int \frac{1}{(x - z)(x - \overline{\lambda})} d \mu(x).$$
Notice that
$$K_{i}(i) = K_{-i}(-i) = \int \frac{1}{1 + x^2} d \mu(x)$$ and so
the Livsic characteristic function is
$$V(z) = \frac{z - i}{z + i} \frac{\int \frac{d \mu(t)}{(t - i)(t - z)}}{\int \frac{d \mu(t)}{(t + i)(t - z)}}, \quad z \in \C_{+}.$$
One can use the Poisson integral theory to show that $V$ is inner on $\C_{+}$ if and only if $\mu$ is singular with respect to Lebesgue measure on $\R$.
If $\mu$ has no support on an interval $I \subset \R$, then, since $|V(x + i y)| < 1$ for $x \in I, y > 0$ while $|V(x + i y)| > 1$ for $x \in I, y < 0$, and $V$ has an obvious analytic continuation across $I$, we see that $|V(x)| = 1$ on $I$. Though $V$, in this case where the support of $\mu$ omits an interval,  may not be inner (unless $\mu$ is singular with respect to Lebesgue measure), it is an extreme function (see the definition of extreme functions in the last section).

\subsection{Toeplitz operators}
Recall the Toeplitz operator example $T_g, g \in N^{+}_{\R}$ from Example \ref{Toeplitz-ex}. Note that $T_{g} \in \mathcal{S}_{1}(H^2)$ precisely when
$$g = i \frac{p + q}{p - q},$$
where $p, q$ are order one Blaschke products such that $p - q$ is outer. One can easily check that
$$p(z) = z, \quad q(z) = \frac{z- a}{1 - a z}, \quad 0 < a < 1,$$
work.

 One can show that when $a = 1/2$, $g$ maps $\mathbb{D}$ onto $\C \setminus ((-\infty, \sqrt{3}] \cup [\sqrt{3}, \infty))$ and
$$g^{-1}(z) = \frac{\sqrt{z^2 - 3}-2 i}{z-i}.$$
As worked out in Example \ref{Toeplitz-ex} we saw that
\begin{equation} \label{K-Toep}
K_{\lambda}(z) = \frac{1}{1 - \overline{g^{-1}(\lambda)} g^{-1}(z)}
\end{equation}
and a computation will show that the corresponding Livsic function $V$ is
$$V(z) = -\frac{\sqrt{z^2 - 3}-2 z}{\sqrt{3} (z+i)}, \quad z \in \C_{+}.$$
Another computation will show that that $|V(x)| = 1$ on $[-\sqrt{3}, \sqrt{3}]$ and so $V$ is extreme.

Let show how the formula \eqref{K-Toep} can be used to prove a theorem about unitary equivalence of symmetric Toeplitz operators.

\begin{Theorem} \label{Toeplitz-1-1}
Let  $g, h \in N^{+}_{\R}$ be such that  $T_{g}, T_{h} \in \mathcal{S}_{1}(H^2)$. Then $T_{g} \cong T_{h}$ if and only  if $g = h (w)$ where $w$ is a disk automorphism.
\end{Theorem}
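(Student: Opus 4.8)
We plan to extract both directions from the reproducing kernel of the model space. Recall from Example~\ref{Toeplitz-ex} that $T_g\in\mathcal S_1(H^2)$ forces $g$ to be univalent, that $\mbox{Ker}(T_g^\ast-\lambda I)=\C k_{g^{-1}(\ov\lambda)}$, and that $\mathcal H(\Gamma_g)$ has reproducing kernel $K^g_\lambda(z)=(1-\ov{g^{-1}(\lambda)}\,g^{-1}(z))^{-1}$, with $T_g$ unitarily equivalent to $M_{\Gamma_g}$ by Proposition~\ref{H-Gamma2}. Since $\mbox{Ker}(T_g^\ast-\lambda I)\neq\{0\}$ for every $\lambda\notin\R$, the domain $g(\mathbb{D})$ contains $\C\setminus\R$; as $g$ is univalent, $g\colon\mathbb{D}\to g(\mathbb{D})$ is biholomorphic, so $g^{-1}$ is analytic on $\C\setminus\R$ and, being a conformal map of $g(\mathbb{D})$ onto $\mathbb{D}$, is proper.

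First suppose $g=h\circ w$ with $w$ a disk automorphism, and put $\varphi:=w^{-1}$, say $\varphi(\zeta)=\mu_0(\alpha_0-\zeta)/(1-\ov{\alpha_0}\zeta)$ with $|\mu_0|=1$, $|\alpha_0|<1$. Then $g^{-1}=\varphi\circ h^{-1}$, and the elementary identity $\bigl(1-\ov{\varphi(a)}\varphi(b)\bigr)^{-1}=\ov{s(a)}\,s(b)\,(1-\ov a b)^{-1}$, where $s(\zeta):=(1-\ov{\alpha_0}\zeta)/\sqrt{1-|\alpha_0|^2}$, gives $K^g_\lambda(z)=G(z)\,K^h_\lambda(z)\,\ov{G(\lambda)}$ with $G:=s\circ h^{-1}$ analytic and zero-free on $\C\setminus\R$. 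This is precisely the relation between the two kernels produced in the proof of Corollary~\ref{Cor-main-Liv-ue}, and the construction there (set $U\,K^g_\lambda(\cdot)a:=K^h_\lambda(\cdot)\,\ov{G(\lambda)}a$ and extend linearly) yields a unitary intertwining $M_{\Gamma_g}$ and $M_{\Gamma_h}$; hence $T_g\cong T_h$. (Alternatively, conjugate $T_h$ by the unitary weighted composition operator on $H^2$ attached to $w$.)

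For the converse, assume $T_g\cong T_h$, so $M_{\Gamma_g}\cong M_{\Gamma_h}$. By Lemma~\ref{multiplier-matrx} the implementing unitary acts as multiplication by an analytic $W$ on $\C\setminus\R$, and it carries $\mbox{Rng}(M_{\Gamma_g}-\lambda I)^\perp=\C K^g_\lambda(\cdot)$ onto $\C K^h_\lambda(\cdot)$; thus $W(z)K^g_\lambda(z)=c(\lambda)K^h_\lambda(z)$ for a nonvanishing scalar $c(\lambda)$, and the unitarity of the implementing map forces
$$c(\lambda)\ov{c(\eta)}=\frac{1-\ov{g^{-1}(\lambda)}\,g^{-1}(\eta)}{1-\ov{h^{-1}(\lambda)}\,h^{-1}(\eta)},\qquad \lambda,\eta\in\C\setminus\R.$$
Evaluating at $\eta=i$ shows $e:=\ov c$ is analytic on $\C\setminus\R$. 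Clearing denominators rewrites the displayed identity as $\langle F(\eta),F(\lambda)\rangle_{\C^2}=\langle G(\eta),G(\lambda)\rangle_{\C^2}$ for the analytic $\C^2$-valued maps $F:=(e,\,g^{-1})$ and $G:=(1,\,e\,h^{-1})$. Neither family lies in a line (that would make $g^{-1}$ or $h^{-1}$ constant), so there is a fixed unitary $\mathcal U$ on $\C^2$ with $F=\mathcal U G$. Writing out the two coordinates and eliminating $e$ yields $g^{-1}=m\circ h^{-1}$ on $\C\setminus\R$, where $m(\zeta)=\bigl(\det(\mathcal U)\,\zeta+\mathcal U_{21}\bigr)/\bigl(1-\mathcal U_{12}\zeta\bigr)$; using $\mathcal U^{-1}=\mathcal U^\ast$ one checks $\mathcal U_{12}=-\det(\mathcal U)\,\ov{\mathcal U_{21}}$ and $|\mathcal U_{21}|<1$ (otherwise $\mathcal U_{11}=0$ and $e\equiv0$), so that $m(\zeta)=\delta(\zeta-\alpha)/(1-\ov\alpha\zeta)$ with $|\delta|=1$, $|\alpha|<1$ --- that is, $m$ is a disk automorphism.

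It remains to globalize. The maps $g^{-1}$ and $m\circ h^{-1}$ are conformal bijections onto $\mathbb{D}$ of $g(\mathbb{D})$ and $h(\mathbb{D})$ respectively, and they agree on $\C\setminus\R$. If there were $z_0\in h(\mathbb{D})\setminus g(\mathbb{D})$, then $z_0\in\R$, and letting $z\to z_0$ through $\C_+$ we would have $g^{-1}(z)\to\partial\mathbb{D}$ (properness of $g^{-1}$, as $z_0\in\partial g(\mathbb{D})$) while $m(h^{-1}(z))\to m(h^{-1}(z_0))$ lies in the open disk --- a contradiction. Hence $h(\mathbb{D})\subseteq g(\mathbb{D})$, and symmetrically $g(\mathbb{D})\subseteq h(\mathbb{D})$, so $g(\mathbb{D})=h(\mathbb{D})$. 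The identity theorem then extends $g^{-1}=m\circ h^{-1}$ to all of $g(\mathbb{D})$, i.e. $h^{-1}\circ g=m^{-1}$, whence $g=h\circ w$ with $w:=m^{-1}$ a disk automorphism. The crux is the converse: passing from the rank-two Gram identity to the Möbius relation $g^{-1}=m\circ h^{-1}$, verifying that $m$ is genuinely a disk automorphism, and upgrading the identity on $\C\setminus\R$ to $g(\mathbb{D})=h(\mathbb{D})$.
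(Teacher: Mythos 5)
Your proposal is correct, and its converse takes a genuinely different route from the paper's. The paper first normalizes $g(0)=h(0)=-i$ by pre-composing with disk automorphisms and then compares only the one-variable data $K^{g}_{-i}/K^{g}_{i}$ versus $K^{h}_{-i}/K^{h}_{i}$ (i.e.\ the scalar Livsic function up to a unimodular constant, via Corollary \ref{Cor-main-Liv-ue}); after the normalization this collapses the unknown relation to $g^{-1}=\alpha h^{-1}$ with $|\alpha|=1$, so $g^{-1}\circ h$ is a rotation. You keep $g,h$ unnormalized, use Lemma \ref{multiplier-matrx} to get the multiplier and the scalar $c(\lambda)$, and turn the full two-variable kernel identity into an equality of Gram matrices of two $\C^{2}$-valued families; the resulting constant $2\times 2$ unitary $\mathcal{U}$ yields the general M\"obius relation $g^{-1}=m\circ h^{-1}$ directly. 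Your route costs a little linear algebra but buys an explicit globalization step (properness of $g^{-1}$ forces $g(\mathbb{D})=h(\mathbb{D})$, then the identity theorem), a point the paper passes over silently when it substitutes $z=h(t)$; the paper's route buys brevity because the one-point normalization reduces everything to a rotation. Two small repairs: with your convention $UK^{g}_{\lambda}=c(\lambda)K^{h}_{\lambda}$, unitarity gives $c(\lambda)\ov{c(\eta)}=K^{g}_{\lambda}(\eta)/K^{h}_{\lambda}(\eta)=\bigl(1-\ov{h^{-1}(\lambda)}h^{-1}(\eta)\bigr)/\bigl(1-\ov{g^{-1}(\lambda)}g^{-1}(\eta)\bigr)$, the reciprocal of your display; this is harmless, as it merely interchanges the roles of $g$ and $h$ and you then land on $h^{-1}=m\circ g^{-1}$, which still gives $g=h\circ w$. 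And in ruling out $\mathcal{U}_{11}=0$, the identity $e\,(1-\mathcal{U}_{12}h^{-1})\equiv\mathcal{U}_{11}$ gives either $e\equiv 0$ (impossible, since $c$ never vanishes) or $h^{-1}$ constant (impossible, since $h$ is univalent); state both alternatives. Note also that the nondegeneracy remark is not actually needed to produce $\mathcal{U}$: an isometry between the spans of the two families always extends to a unitary of $\C^{2}$.
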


\begin{proof}
If $g = h \circ w$ then the unitary operator $U: H^2 \to H^2$, $U f = \sqrt{w'} (f \circ w)$ satisfies $U T_{h} = T_{g} U$ and so $T_{g} \cong T_{h}$.

For the other direction, assume $T_{g} \cong T_{h}$. By composing with disk automorphisms, which will not change the unitary equivalence of $T_g$ and $T_h$,  we can assume that $g(0) = h(0) = -i$. Recall that the kernels $K^{g}$ and $K^{h}$ for the associated spaces corresponding to $T_{g}$ and $T_{h}$ are given by
$$K^{g}_{\lambda}(z) = \frac{1}{1 - \overline{g^{-1}(\lambda)} g^{-1}(z)}, \quad K^{h}_{\lambda}(z) = \frac{1}{1 - \overline{h^{-1}(\lambda)} h^{-1}(z)}.$$
Since $T_{g} \cong T_{h}$ we have
$$\frac{K^{g}_{-i}(z)/\|\cdot\|}{K^{g}_{i}(z)/\|\cdot\|} = \zeta \frac{K^{h}_{-i}(z)/\|\cdot\|}{K^{h}_{i}(z)/\|\cdot\|}$$
for some $|\zeta| = 1$. This reduces to the identity
$$\frac{1 - g^{-1}(z) \overline{g^{-1}(i)}}{\sqrt{1 - |g^{-1}(i)|^2}} = \zeta \frac{1 - h^{-1}(z) \overline{h^{-1}(i)}}{\sqrt{1 - |h^{-1}(i)|^2}}.$$
Plug in $z = i$ into the above identity to show that $\zeta = 1$ and $|g^{-1}(i)| = |h^{-1}(i)|$. A little algebra will now show that
$$g^{-1}(z) = \frac{\overline{h^{-1}(i)}}{\overline{g^{-1}(i)}} h^{-1}(z)$$ and moreover,
$$\frac{\overline{h^{-1}(i)}}{\overline{g^{-1}(i)}} = \alpha$$ is unimodular. Letting $z = h(t)$ for some $|t| < 1$ we see that
$$g^{-1}(h(t)) = \alpha t$$ and so
$g^{-1} \circ h$ is a disk automorphism.
\end{proof}

\begin{Question}
For $g, h \in N^{+}_{\R}$ with $T_{g}, T_{h} \in \mathcal{S}_{n}(H^2)$, when is $T_g \cong T_h$?
\end{Question}

For the general case, the answer is unknown but we can make a few general remarks.

\begin{Proposition}
If $g \in N_{\R}^{+}$ and $T_{g} \in \mathcal{S}_n(H^2)$, $n < \infty$, then the point spectrum $\sigma_{p}(T_{g}^{*})$ of $T_{g}^{*}$ satisfies
$$\sigma_{p}(T_{g}^{*}) = \left\{\overline{g(z)}: z \in \mathbb{D}\right\}.$$
\end{Proposition}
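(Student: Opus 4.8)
The plan is to compute $\mbox{Ker}(T_g^* - \lambda I)$ explicitly and read off the point spectrum from it. The inclusion $\{\overline{g(z)}:z\in\mathbb{D}\}\subseteq\sigma_p(T_g^*)$ is immediate from the identity $T_g^* k_z = \overline{g(z)}\,k_z$ recalled in the excerpt: since $k_z\neq 0$ in $H^2$, every $\overline{g(z)}$, $z\in\mathbb{D}$, is an eigenvalue of $T_g^*$.

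For the reverse inclusion I would first identify the eigenspaces as model spaces. By Sarason's representation \eqref{gisba} write $g=b/a$ with $a,b\in H^\infty$, $a$ outer; since $n<\infty$, $g$ is rational, and hence so are $a$ and $b$. For $\lambda\in\mathbb{C}$ set $c_\lambda:=b-\overline{\lambda}\,a\in H^\infty$, a rational function. Using $ga=b$ on $\partial\mathbb{D}$ one checks, for $f\in H^2$, that $af\in\mathscr{D}(T_g)=aH^2$ and $(T_g-\overline{\lambda}I)(af)=g(af)-\overline{\lambda}(af)=bf-\overline{\lambda}af=c_\lambda f\in H^2$, so that $\mbox{Rng}(T_g-\overline{\lambda}I)=c_\lambda H^2$. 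Factoring $c_\lambda=I_\lambda O_\lambda$ (inner times outer) and using that $O_\lambda H^2$ is dense in $H^2$ and $M_{I_\lambda}$ is an isometry, one gets $\overline{c_\lambda H^2}=I_\lambda H^2$, whence
$$\mbox{Ker}(T_g^*-\lambda I)=\mbox{Rng}(T_g-\overline{\lambda}I)^{\perp}=(I_\lambda H^2)^{\perp},$$
the model space of the inner factor $I_\lambda$ of $c_\lambda=b-\overline{\lambda}a$.

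Now suppose $\lambda\in\sigma_p(T_g^*)$, so $(I_\lambda H^2)^{\perp}\neq\{0\}$, i.e.\ $I_\lambda$ is a nonconstant inner function. Since $g$ is nonconstant (it generates a simple symmetric, non-self-adjoint operator), $c_\lambda\not\equiv 0$; being rational and bounded, $c_\lambda$ has inner factor equal to a finite Blaschke product, so $c_\lambda$ has a zero $z_0\in\mathbb{D}$, i.e.\ $b(z_0)=\overline{\lambda}\,a(z_0)$. Because $a$ is outer it has no zero in $\mathbb{D}$, so $a(z_0)\neq 0$ and therefore $\overline{\lambda}=b(z_0)/a(z_0)=g(z_0)$, that is $\lambda=\overline{g(z_0)}$. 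This gives $\sigma_p(T_g^*)\subseteq\{\overline{g(z)}:z\in\mathbb{D}\}$ and finishes the proof.

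The step requiring the most care is the identification $\mbox{Rng}(T_g-\overline{\lambda}I)=c_\lambda H^2$ together with its orthocomplement being a model space: one must check that $(g-\overline{\lambda})af$ genuinely lies in $H^2$ for $f\in H^2$ (it equals $c_\lambda f$ with $c_\lambda\in H^\infty$), use the density of $O_\lambda H^2$ to replace the non-closed range $c_\lambda H^2$ by its closure $I_\lambda H^2$, and invoke the rationality of $g$ — hence of $a$, $b$, $c_\lambda$ — so that $I_\lambda$ is a finite Blaschke product and in particular has a zero in $\mathbb{D}$; it is precisely the outerness of $a$ in \eqref{gisba} that rules out the spurious possibility $a(z_0)=b(z_0)=0$.
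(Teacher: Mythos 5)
Your proposal is correct and follows essentially the same route as the paper: write $g=b/a$ via Sarason's decomposition, reduce the eigenvalue equation to orthogonality against $(b-\overline{\lambda}a)H^{2}$, and use rationality (finite Blaschke inner factor) together with the outerness of $a$ to produce a point $z_{0}\in\mathbb{D}$ with $g(z_{0})=\overline{\lambda}$. Your explicit identification of $\mbox{Ker}(T_{g}^{*}-\lambda I)$ as the model space of the inner factor of $b-\overline{\lambda}a$, and your explicit remark that $a(z_{0})\neq 0$, merely make precise steps the paper leaves implicit.
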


\begin{proof}
Since $T_{g}^{*} k_{\lambda} = \overline{g(\lambda)} k_{\lambda}$ we have
$$\left\{\overline{g(z)}: z \in \mathbb{D}\right\} \subset \sigma_{p}(T_{g}^{*}).$$ For the other direction, suppose
$T_{g}^{*} f = \eta f$ for some $f \in \mathscr{D}(T_{g}^{*}) \setminus \{0\}$. Then
$f \perp \mbox{Rng}(T_{g} - \overline{\eta}I )$ or equivalently
$$\langle f, (T_{g} - \overline{\eta}I) h\rangle = 0, \quad \forall h \in \mathscr{D}(T_{g}).$$
But writing $g = b/a$ in the Sarason decomposition from \eqref{gisba}, we see that $\mathscr{D}(T_{g})  = a H^2$ and so
$$\langle f, (b/a - \overline{\eta}) a w\rangle = 0, \quad \forall w \in H^2,$$
which implies
$$\langle f, (b - \overline{\eta} a) w \rangle = 0, \quad \forall w \in H^2.$$
However, since $f \not \equiv 0$, it must be the case that
$b - \overline{\eta} a$ has an inner factor. But since $a$ and $b$ are rational functions (Sarason proves that if $g$ is rational then so are $a$ and $b$) we see that this inner factor is a finite Blaschke product and so $b - \overline{\eta} a$ must vanish for some $z \in \mathbb{D}$, i.e., $g(z) = \overline{\eta}$. Thus we have the inclusion
$$\sigma_{p}(T_{g}^{*}) \subset \left\{\overline{g(z)}: z \in \mathbb{D}\right\},$$
which completes the proof.
\end{proof}

\begin{Corollary}
Suppose $g_1, g_2 \in N_{\R}^{+}$ with $T_{g_1}, T_{g_2} \in \mathcal{S}_{n}(H^2), n \in \mathbb{N}$. If $T_{g_1} \cong T_{g_2}$, then $g_1(\mathbb{D}) = g_2(\mathbb{D})$.
\end{Corollary}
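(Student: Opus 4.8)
The plan is to deduce this immediately from the preceding Proposition, which identifies the point spectrum $\sigma_p(T_g^*)$ with the set $\{\overline{g(z)}: z \in \mathbb{D}\}$. The only real content to supply is that unitary equivalence of $T_{g_1}$ and $T_{g_2}$ forces $\sigma_p(T_{g_1}^*) = \sigma_p(T_{g_2}^*)$, and then to unwind the complex conjugation.

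First I would record the standard fact that unitary equivalence passes to adjoints and preserves point spectra. Suppose $U: H^2 \to H^2$ is unitary with $U \mathscr{D}(T_{g_1}) = \mathscr{D}(T_{g_2})$ and $U T_{g_1} = T_{g_2} U$, i.e. $U T_{g_1} U^{*} = T_{g_2}$ as densely defined operators. Taking adjoints of both sides (using that conjugation by a unitary commutes with taking adjoints) gives $U T_{g_1}^{*} U^{*} = T_{g_2}^{*}$. Consequently, if $T_{g_1}^{*} f = \eta f$ for some nonzero $f \in \mathscr{D}(T_{g_1}^{*})$, then $T_{g_2}^{*}(U f) = U T_{g_1}^{*} f = \eta (U f)$ with $Uf \neq 0$, so $\eta \in \sigma_p(T_{g_2}^{*})$; the reverse inclusion is symmetric. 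Hence $\sigma_p(T_{g_1}^{*}) = \sigma_p(T_{g_2}^{*})$.

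Next I would invoke the Proposition directly: since $T_{g_1}, T_{g_2} \in \mathcal{S}_n(H^2)$ with $n < \infty$, we have
$$
\sigma_p(T_{g_1}^{*}) = \left\{\overline{g_1(z)} : z \in \mathbb{D}\right\}, \qquad
\sigma_p(T_{g_2}^{*}) = \left\{\overline{g_2(z)} : z \in \mathbb{D}\right\}.
$$
Combining this with the equality of point spectra just established yields $\{\overline{g_1(z)} : z \in \mathbb{D}\} = \{\overline{g_2(z)} : z \in \mathbb{D}\}$. Applying the conjugation map $w \mapsto \overline{w}$ (a bijection of $\C$) to both sides gives $g_1(\mathbb{D}) = g_2(\mathbb{D})$, which is the claim.

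I do not anticipate a genuine obstacle here; the statement is a formal corollary. The only point demanding a line of care is the passage $U T_{g_1} U^{*} = T_{g_2} \Rightarrow U T_{g_1}^{*} U^{*} = T_{g_2}^{*}$ for unbounded (merely densely defined, closed) operators, but this is routine once one notes that $T_{g_1}$ and $T_{g_2}$ are closed and densely defined, so their adjoints behave well under unitary conjugation.
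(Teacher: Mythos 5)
Your proposal is correct and follows essentially the same route as the paper: both reduce the claim to the preceding Proposition identifying $\sigma_{p}(T_{g}^{*})$ with $\{\overline{g(z)}:z\in\mathbb{D}\}$, after showing that unitary equivalence forces $\sigma_{p}(T_{g_1}^{*})=\sigma_{p}(T_{g_2}^{*})$. The only cosmetic difference is in that intermediate step: you conjugate the adjoint directly, using $(UT_{g_1}U^{*})^{*}=UT_{g_1}^{*}U^{*}$ for closed densely defined operators, whereas the paper argues through the correspondence $g\in\mathrm{Rng}(T_{g_1}-\lambda I)^{\perp}\Leftrightarrow Ug\in\mathrm{Rng}(T_{g_2}-\lambda I)^{\perp}$ and the identification of these orthocomplements with the kernels of the adjoints; both are valid.
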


\begin{proof}
If $U T_{g_1} = T_{g_2} U$, where $U: H^2 \to H^2$ is unitary with $U \mathscr{D}(T_{g_1}) = \mathscr{D}(T_{g_2})$, then $U(T_{g_1} - \lambda I) = (T_{g_2} - \lambda I)$ for all $\lambda \in \C$. So if $g \in H^2$ and $f \in \mathscr{D}(T_{g_1})$ with
$$\langle (T_{g_1}  - \lambda I) f, g \rangle = 0,$$
then
$$\langle (T_{g_2} - \lambda I) U f, U g \rangle = 0.$$
This means that
$$g \in \mbox{Rng}(T_{g_1} - \lambda I)^{\perp} \Leftrightarrow U g \in \mbox{Rng}(T_{g_2} - \lambda I)^{\perp}$$ and so
$$\mbox{Ker}(T_{g_1}^{*} - \overline{\lambda} I) \not = \{0\} \Leftrightarrow \mbox{Ker}(T_{g_2}^{*} - \overline{\lambda} I) \not = \{0\}.$$
This means that $\sigma_{p}(T_{g_1}^{*}) = \sigma_{p}(T_{g_2}^{*})$. By the previous proposition we conclude that $g_1(\mathbb{D}) = g_2(\mathbb{D})$.
\end{proof}

\begin{Remark}
Notice how the previous corollary gives us a proof of Theorem \ref{Toeplitz-1-1} which comes from general principles and does not involve the Livsic characteristic function.
\end{Remark}

Suppose $T_{g} \in \mathcal{S}_n(H^2)$ and we want to compute the Livsic characteristic function. In this case
$$\mbox{Ker}(T^{*}_{g} - \overline{\lambda} I) = \bigvee\{k_{z_{j}(\lambda)}: 1 \leqslant j \leqslant n\},$$ where
$z_{1}(\lambda), \cdots, z_{n}(\lambda)$ are the solutions to $g(z) = \lambda$.

If, and this is not always the case, the $g$ is such that the $z_j(\lambda)$ can be  chosen so $\lambda \mapsto z_{j}(\lambda)$ is analytic on $\C \setminus \R$. Then we can use our model discussed earlier and define
$$\gamma(\lambda) = (k_{z_{1}(\lambda)}, \cdots, k_{z_{n}(\lambda)}).$$
The Hilbert space $\mathcal{H}(\Gamma)$ is then
$$\{(f(z_{1}(\lambda)), \cdots, f(z_{n}(\lambda))): f \in H^2\}$$ with inner product
$$\langle (f_1(z_{1}(\lambda)), \cdots, f_1(z_{n}(\lambda))), (f_2(z_{1}(\lambda)), \cdots, f_2(z_{n}(\lambda))\rangle_{\mathcal{H}(\Gamma)} = \langle f_1, f_2 \rangle_{H^2}.$$
By our earlier discussion, the reproducing kernel is
$$K_{\lambda}(z) = [k_{z_i(\lambda)}(z_{j}(z))]_{1 \leqslant i, j \leqslant n}$$
and the Livsic characteristic function can be computed from here.

Can the above situation actually happen? Yes. Consider the case where $g \in N^{+}_{\R}$ and $T_g \in \mathcal{S}_1(H^2)$ (and consequently $g$ will be univalent). Then $T_{g^2} \in \mathcal{S}_{2}(H^2)$ and to solve $g(z)^2 = \lambda$ we must solve
$g(z) = \pm \sqrt{\lambda}$, which, at the end of the day (and since $g$ is invertible) will yield $z_1(\lambda)$ and $z_2(\lambda)$ analytic on $\C \setminus \R$. Note how the kernel function in this case was computed earlier.

So what does this all mean? From our version of Livsic's theorem we know that $T_{g_1}$ is unitarily equivalent to $T_{g_2}$ if and only if $V_1(\lambda) = R V_{2}(\lambda) Q$, where $V_1$ and $V_2$ are created from the above expression for $K_{\lambda}(z)$. Is it possible to translate this into a more workable condition -- as in the $(1, 1)$ case where $T_{g_1}$ is unitarily to $T_{g_2}$ if and only if $g_1 = g_2 \circ h$ where $h$ is a disk automorphism.

The more likely situation is when the functions $\lambda \mapsto z_j(\lambda)$ are only locally analytic -- to avoid where $g' = 0$. In this case, by Grauert's construction (or really the Krein construction) we have
$$\gamma(\lambda) = (\gamma_{1}(\lambda), \cdots, \gamma_n(\lambda)),$$
where
$$\gamma_{i}(\lambda) = \sum_{j = 1}^{n} \overline{a_{i, j}(\lambda)} k_{z_{j}(\lambda)}.$$
The functions $\lambda \mapsto a_{i, j}(\lambda)$ are locally analytic -- avoiding the zeros of $g'$. But somehow, amazingly, $\gamma_{j}$ are co-analytic on $\C \setminus \R$.

When looking at \emph{bounded} Toeplitz operators on $H^2$, there is this result of Cowen \cite{Cowen} (see also \cite{Steph}).

\begin{Theorem}[Cowen]
Suppose that $\phi_1$ and $\phi_2$ are bounded rational functions on $\mathbb{D}$. Then the following are equivalent:
\begin{enumerate}
\item $T_{\phi_1}$ is similar to $T_{\phi_2}$.
\item $T_{\phi_1}$ is unitarily equivalent to $T_{\phi_2}$,
\item There is a bounded function $h$ on $\mathbb{D}$ and Blaschke products $b_1$ and $b_2$ of equal order such that $\phi_1 = h \circ b_1$ and $\phi_2 = h \circ b_2$.
\end{enumerate}
\end{Theorem}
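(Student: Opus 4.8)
The plan is to establish the two non-trivial implications $(3)\Rightarrow(2)$ and $(1)\Rightarrow(3)$; the implication $(2)\Rightarrow(1)$ is immediate.

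For $(3)\Rightarrow(2)$ I would first reduce to the normalized case $b_j(0)=0$: if $b_j(0)\neq 0$, pick a disk automorphism $\alpha_j$ carrying $b_j(0)$ to $0$ and rewrite $\phi_j = h\circ b_j = (h\circ\alpha_j^{-1})\circ(\alpha_j\circ b_j)$; replacing $h$ by $h\circ\alpha_j^{-1}$ only alters the resulting Toeplitz operator by the unitary $f\mapsto\sqrt{\beta'}\,(f\circ\beta)$ exactly as in the proof of Theorem~\ref{Toeplitz-1-1}, hence is harmless. With $b=b_j$ satisfying $b(0)=0$ one has the orthogonal decomposition $H^2=\bigoplus_{k\ge 0} b^k\mathscr{K}(b)$ with $\mathscr{K}(b)=H^2\ominus bH^2$ of dimension $n$; the map $\mathscr{K}(b)\otimes H^2\to H^2$, $g\otimes z^k\mapsto b^k g$, is unitary and intertwines $I_{\mathscr{K}(b)}\otimes T_z$ with $T_b$, hence $I_{\mathscr{K}(b)}\otimes T_h$ with $h(T_b)=T_{h\circ b}$ via the $H^\infty$ functional calculus. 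Since $\dim\mathscr{K}(b_1)=\dim\mathscr{K}(b_2)=n$, this gives $T_{\phi_1}\cong I_{\mathbb{C}^n}\otimes T_h\cong T_{\phi_2}$.

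The substance is $(1)\Rightarrow(3)$. A rational function bounded on $\mathbb{D}$ is analytic across $\overline{\mathbb{D}}$, so $T_\phi$ is the analytic Toeplitz operator of multiplication by $\phi$. The plan is: (a) use the factorization theory of rational maps to write $\phi_j=h_j\circ b_j$ with $b_j$ a finite Blaschke product of maximal order $n_j$, so that $h_j$ is \emph{primitive}, i.e.\ does not factor through any Blaschke product of order $>1$ (the degree identity $\deg\phi_j=\deg h_j\cdot\deg b_j$ guarantees a maximal such $b_j$ exists, and $h_j$ is again rational and bounded on $\mathbb{D}$ since $b_j(\mathbb{D})=\mathbb{D}$); (b) invoke, as above, $T_{\phi_j}\cong I_{\mathbb{C}^{n_j}}\otimes T_{h_j}$; (c) appeal to Cowen's structure theorem for the commutant of an analytic Toeplitz operator, which for primitive $h_j$ identifies $\{T_{h_j}\}'$ with the weak-$*$ closed algebra generated by $T_{h_j}$ and $I$ and in particular makes $T_{h_j}$ irreducible, so that $\{I_{\mathbb{C}^{n_j}}\otimes T_{h_j}\}'$ is the algebra $M_{n_j}$ of $n_j\times n_j$ matrices over $\{T_{h_j}\}'$; (d) run a Krull--Schmidt-type argument on the similarity $I_{\mathbb{C}^{n_1}}\otimes T_{h_1}\sim I_{\mathbb{C}^{n_2}}\otimes T_{h_2}$ to force $n_1=n_2=:n$ and $T_{h_1}\sim T_{h_2}$; (e) upgrade similarity of the irreducible pieces to unitary equivalence $T_{h_1}\cong T_{h_2}$ using the matching of spectra $\overline{h_1(\mathbb{D})}=\overline{h_2(\mathbb{D})}$, essential spectra $h_1(\partial\mathbb{D})=h_2(\partial\mathbb{D})$, and Fredholm index (valence) data that a similarity already forces; (f) observe that a unitary $U$ with $UT_{h_1}=T_{h_2}U$ sends the eigenvectors of $T_{h_1}^*$ (namely $T_{h_1}^*k_w=\overline{h_1(w)}k_w$) to those of $T_{h_2}^*$, and deduce by the standard argument that $U$ is a weighted composition operator $Uf=\lambda\,(f\circ\psi)$ with $\psi$ a disk automorphism, hence $h_1=h_2\circ\psi$ up to a unimodular constant. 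Then $\phi_1=h_2\circ(\psi\circ b_1)$ and $\phi_2=h_2\circ b_2$, with $\psi\circ b_1$ a Blaschke product of order $n_1=n=n_2=\deg b_2$, so $h:=h_2$ witnesses $(3)$.

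I expect the main obstacle to lie entirely in steps (c)--(e): the commutant structure theorem and the resulting ``primeness'' of $T_h$ for primitive $h$, the Krull--Schmidt uniqueness of the tensor decomposition, and the passage from similarity to unitary equivalence for the irreducible factors. These are precisely the hard analytic inputs of Cowen's theory; by contrast the normalization, the decomposition $H^2=\bigoplus b^k\mathscr{K}(b)$, and the identification of intertwiners with weighted composition operators are comparatively routine.
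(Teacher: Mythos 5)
This statement is quoted in the paper as a known theorem of Cowen \cite{Cowen} (see also \cite{Steph}); the paper supplies no proof of it, so there is no internal argument to compare yours against. On its own terms, your direction $(3)\Rightarrow(2)$ is sound and is essentially the same mechanism the paper itself uses two results later for unbounded symbols: the wandering-subspace decomposition $H^2=\bigoplus_{k\geqslant 0}b^k\bigl(H^2\ominus bH^2\bigr)$, which by the way is valid for any inner $b$, so your preliminary normalization $b_j(0)=0$ is unnecessary (and, as you set it up, it replaces the single $h$ by two different outer functions $h\circ\alpha_1^{-1}$ and $h\circ\alpha_2^{-1}$; this still works only because each $T_{\phi_j}$ is separately unitarily equivalent to $I_{\mathbb{C}^n}\otimes T_h$, so the detour buys nothing).

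The genuine problem is $(1)\Rightarrow(3)$, where your steps (d) and (e) are not arguments but placeholders for exactly the content of Cowen's theorem. In (d), there is no Krull--Schmidt theorem for similarity of Hilbert-space operators: from an invertible $S$ with $S\bigl(I_{\mathbb{C}^{n_1}}\otimes T_{h_1}\bigr)=\bigl(I_{\mathbb{C}^{n_2}}\otimes T_{h_2}\bigr)S$ you cannot simply ``cancel'' to get $n_1=n_2$ and $T_{h_1}\sim T_{h_2}$; any such cancellation must be extracted from the specific structure of the commutants, which is where Cowen's actual work lies. In (e), the assertion that matching spectra, essential spectra and Fredholm index (valence) data upgrade similarity of the primitive pieces to unitary equivalence is unsubstantiated: these invariants amount to knowing the boundary curve $h_j(\partial\mathbb{D})$ and the valence function off it, and it is far from clear (and you give no reason) that this data determines a primitive rational $h$ up to precomposition with a disk automorphism; indeed that conclusion is logically equivalent to the primitive case of the theorem you are trying to prove, so as written the argument is circular at its crucial point. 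A smaller inaccuracy in (c): for primitive rational $h$ the commutant $\{T_h\}'$ is the algebra of \emph{all} analytic Toeplitz operators $\{T_\psi:\psi\in H^\infty\}$ (Thomson/Cowen), which in general strictly contains the weak-$*$ closed algebra generated by $T_h$ and $I$; irreducibility still follows, but the statement you invoke is not the correct form of the commutant theorem. So the proposal correctly isolates where the difficulty sits, but it does not close it; to make this a proof you would have to import the commutant-theoretic machinery of \cite{Cowen} rather than gesture at it.
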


Can we get a similar result for our unbounded Toeplitz operators? We think the answer is yes and we can prove the following result which is analogous to one direction of  Cowen's result \cite{Cowen} for bounded Toeplitz operators. In fact, with nearly the same proof.

\begin{Proposition}
If $g \in N^{+}_{\R}$ and $B$ is a finite Blaschke product of order $n$, then
$$T_{g \circ B} \cong \oplus_{n} T_{g}.$$
\end{Proposition}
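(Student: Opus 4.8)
The plan is to realize the unitary equivalence concretely through the module decomposition of $H^2$ induced by $B$. Write $g = b/a$ as in \eqref{gisba}. Since $B$ maps $\partial\mathbb{D}$ into $\partial\mathbb{D}$, composition yields $g\circ B = (b\circ B)/(a\circ B)$ with $|a\circ B|^2 + |b\circ B|^2 = 1$ a.e.\ on $\partial\mathbb{D}$, and $a\circ B$ is outer (the composition of an outer function with an inner function is again outer; one checks $\log|(a\circ B)(0)| = \log|a(B(0))| = \int\log|a|\,dB_*m = \int\log|a\circ B|\,dm$, using that $B_*m$ is harmonic measure at $B(0)$). Hence $g\circ B\in N^+_{\mathbb{R}}$ and, in the notation of Example \ref{Toeplitz-ex}, $T_{g\circ B}$ is a closed symmetric operator with $\mathscr{D}(T_{g\circ B}) = (a\circ B)H^2$.

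Next I would set up the decomposition. Since $B$ is a finite Blaschke product of order $n$, the model space $K_B := H^2\ominus BH^2$ is $n$-dimensional and consists of bounded (rational) functions; fix an orthonormal basis $\phi_1,\dots,\phi_n$ of $K_B$. Using $|B|=1$ a.e.\ on $\partial\mathbb{D}$ together with $B^mH^2\perp K_B$ for $m\geqslant 1$, a direct computation shows that $\{B^k\phi_j : k\geqslant 0,\ 1\leqslant j\leqslant n\}$ is an orthonormal system, and it is complete because $H^2 = \bigoplus_{k\geqslant 0}B^kK_B$ (the Wold decomposition of the isometry $f\mapsto Bf$). Consequently each $f\in H^2$ has a unique expansion $f = \sum_{j=1}^n (g_j\circ B)\phi_j$ with $g_j\in H^2$, the summands are mutually orthogonal, and $\|(g_j\circ B)\phi_j\| = \|g_j\|_{H^2}$; thus $Wf := (g_1,\dots,g_n)$ defines a unitary operator from $H^2$ onto $\bigoplus_{j=1}^n H^2$.

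Finally I would transport $T_{g\circ B}$ through $W$. For $\psi\in H^\infty$ the pointwise identity $(\psi\circ B)(h\circ B) = (\psi h)\circ B$ gives $(\psi\circ B)\sum_j(g_j\circ B)\phi_j = \sum_j((\psi g_j)\circ B)\phi_j$, i.e.\ $W$ conjugates multiplication by $\psi\circ B$ to $\bigoplus_n T_\psi$. Taking $\psi = a$ shows $W$ maps $\mathscr{D}(T_{g\circ B}) = (a\circ B)H^2$ \emph{onto} $\bigoplus_n(aH^2) = \mathscr{D}\bigl(\bigoplus_n T_g\bigr)$; taking $\psi = b$ and using $(g\circ B)(a\circ B) = b\circ B$ shows that for $f = \sum_j((ag_j)\circ B)\phi_j$ in this domain, $T_{g\circ B}f = \sum_j((bg_j)\circ B)\phi_j$, so that $W T_{g\circ B}f = (bg_1,\dots,bg_n) = \bigl(\bigoplus_n T_g\bigr)(ag_1,\dots,ag_n) = \bigl(\bigoplus_n T_g\bigr)Wf$. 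Hence $W T_{g\circ B}W^{*} = \bigoplus_n T_g$, which is the assertion.

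The argument has no deep obstacle; the point that needs the most care is the matching of the (possibly unbounded) \emph{domains} — which is exactly why I would factor through the representation $g = b/a$, so that both domains become concrete subspaces $(a\circ B)H^2$ and $aH^2$ on which the Toeplitz actions are literal multiplications by $b\circ B$ and $b$. The only genuinely external input is that composing an outer function with the (finite Blaschke, hence inner) function $B$ yields an outer function, needed so that $T_{g\circ B}$ is defined and densely defined; this can be settled by the harmonic-measure identity above or simply quoted.
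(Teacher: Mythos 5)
Your proof is correct and follows essentially the same route as the paper: your unitary $W$ is just the inverse of the paper's map $U(\oplus_j f_j)=\sum_j w_j(f_j\circ B)$ built from an orthonormal basis of $H^2\ominus BH^2$ and the decomposition $H^2=\bigoplus_{k\geqslant 0}B^k\bigl(H^2\ominus BH^2\bigr)$, with the domains matched through the Sarason representation $g=b/a$ exactly as in the paper. The only difference is that you verify the details the paper dismisses with ``one easily checks,'' including the (correct, and tacitly assumed there) fact that $a\circ B$ is outer, so that $\mathscr{D}(T_{g\circ B})=(a\circ B)H^2$.
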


\begin{proof}
Let $\{w_1, \ldots, w_n\}$ be an orthonormal basis for $(B H^2)^{\perp}$ (which is $n$-dimensional since $B$ has order $n$). Then $$\{w_{j} B^{k}: 1 \leqslant j \leqslant n, k \geqslant 1\}$$ is an orthonormal basis for $H^2$. This allows us to define the unitary operator
$$U: \oplus_{n} H^2 \to H^2, \quad U(\oplus_{j = 1}^{n} f_j) = w_1 (f_1 \circ B) + \cdots + w_{n} (f_n \circ B).$$
If $g = b/a$ is the canonical representation of $g$, as before, then, as discussed before, the domain of $T_g$ is $a H^2$, the domain of $T_{g \circ B}$ is $(a \circ B) H^2$, and the domain of $\oplus_{n} T_{g}$ is $\oplus_{n} aH^2$.

One easily checks from the definition of $U$ that $U(\oplus_{n} a H^2) = (a \circ B)H^2$ and that
$$U(\oplus_{n} T_{g}) = T_{g \circ B} U.$$ Thus $T_{g \circ B} \cong \oplus_{n} T_{g}.$
\end{proof}

\begin{Corollary}
If $g \in N^{+}_{\R}$ and $B_1, B_2$ are Blaschke products of order $n$, then $T_{g \circ B_1} \cong T_{g \circ B_2}$.
\end{Corollary}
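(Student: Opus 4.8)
The plan is to deduce this immediately from the preceding Proposition. The key observation is that that Proposition computes $T_{g \circ B}$, up to unitary equivalence, purely in terms of the order of $B$: namely $T_{g \circ B} \cong \oplus_n T_g$ whenever $B$ is a finite Blaschke product of order $n$. In particular the unitary equivalence class of $T_{g \circ B}$ does not see anything about $B$ beyond its order, so two choices of order-$n$ Blaschke product must produce unitarily equivalent operators.

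Concretely, I would first apply the preceding Proposition to $B_1$ to obtain a unitary $U_1 \colon \oplus_n H^2 \to H^2$ with $U_1(\oplus_n T_g) = T_{g \circ B_1} U_1$, and likewise a unitary $U_2 \colon \oplus_n H^2 \to H^2$ with $U_2(\oplus_n T_g) = T_{g \circ B_2} U_2$; here one uses that both $B_1$ and $B_2$ have order $n$, so the \emph{same} direct-sum operator $\oplus_n T_g$ (with domain $\oplus_n aH^2$, writing $g = b/a$ for the canonical Sarason representation) appears in both cases. Then $U := U_2 U_1^{*} \colon H^2 \to H^2$ is unitary, it carries $\mathscr{D}(T_{g \circ B_1}) = (a \circ B_1) H^2$ onto $\mathscr{D}(T_{g \circ B_2}) = (a \circ B_2) H^2$, and it intertwines $T_{g \circ B_1}$ with $T_{g \circ B_2}$. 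Hence $T_{g \circ B_1} \cong T_{g \circ B_2}$.

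There is essentially no obstacle here: the real content was already extracted in the preceding Proposition, and this Corollary is just the remark that unitary equivalence is an equivalence relation, so the ``middle man'' $\oplus_n T_g$ can be removed. The only minor point worth recording is that one must check the domains match up correctly when composing the two intertwining unitaries, which is immediate from the description $\mathscr{D}(\oplus_n T_g) = \oplus_n aH^2$ supplied in the proof of the Proposition.
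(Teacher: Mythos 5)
Your argument is correct and is exactly the intended one: the paper offers no separate proof of this Corollary because it follows from the preceding Proposition by transitivity of unitary equivalence, i.e., $T_{g\circ B_1}\cong \oplus_n T_g \cong T_{g\circ B_2}$, which is precisely what you do by composing the two intertwining unitaries. Your extra remark about the domains matching is a harmless bonus, not a needed addition.
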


\section{Herglotz spaces}
\label{section:Herglotz}

There are many ways one can create a model space $\mathcal{H}(\Gamma)$ for a given $T \in \mathcal{S}_{n}(\mathcal{H})$, i.e., a Hilbert space of vector-valued analytic functions on $\C \setminus \R$ for which multiplication by the independent variable is unitarily equivalent to $T$. Indeed, if $\mathcal{H}_1$ is a model space for $T$ and $W(z): \mathcal{K} \to \mathcal{K}$ is invertible for each $z \in \C \setminus \R$ and analytic on $\C \setminus \R$, then $\mathcal{H}_2 := W \mathcal{H}_1$ (endowed with the norm $\|W f\|_{\mathcal{H}_2} := \|f\|_{\mathcal{H}_1}$) is also a model space for $T$. That is to say the map $f \mapsto W f$ is an isometric multiplier from $\mathcal{H}_1$ onto $\mathcal{H}_2$. Furthermore, as seen by the proof of Corollary \ref{Cor-main-Liv-ue}, we know that if $K^1, K^2$ are the corresponding kernel functions for model spaces $\mathcal{H}_1, \mathcal{H}_2$ then
$$K^1_{\lambda}(z) = W(z) K^2_{\lambda}(z) W(\lambda)^{*}$$
if and only if $\mathcal{H}_1 = W \mathcal{H}_{2}$.

We know from our earlier work that, up to unitary operators (matrices), the Livsic function determines unitary equivalence for operators in $\mathcal{S}_{n}(\mathcal{H})$. It turns out that one can parameterize these model spaces in terms of the Livsic characteristic function and a certain Herglotz space. This will be the efforts of this section.

So far we know that for our given $T \in \mathcal{S}_{n}(\mathcal{H})$ and model $\Gamma$, the kernel function
$K_{\lambda}(z)$ can be factored as
$$K_{\lambda}(z) = \Phi(z) \left( \frac{I - V(z) V(\lambda)^{*}}{1 - b(z) \overline{b(\lambda)}} \right) \Phi(\lambda)^{*},$$
where $$\Phi(z) = K_{i}(z) K_{i}(i)^{-1/2}, \quad \Psi(z) = K_{-i}(z) K_{-i}(-i)^{-1/2}, \quad V(z) = b(z) \Phi(z)^{-1} \Psi(z),$$ and
$V$ is, up to unitary operators, the Livsic characteristic function for $T$. Moreover, $V$ is contractive on $\C_{+}$ and $V(i) = 0$. Also recall that $V$ is a meromorphic operator-valued function on $\C_{-}$.

As observed earlier in Remark \ref{zero-pole-V} but worth reminding here,  the denominator in the above formula for $K_{\lambda}(z)$ vanishes when $z = \overline{\lambda}$ and thus the numerator must also vanish. This shows
$$V(z) V(\overline{z})^{*} = I, \quad z \in \C \setminus \R.$$
When $V$ has a zero or pole at $z$ the above formula must be interpreted in the usual way (poles cancel out the zeros). This means that
we can use the the identity $V(z) V(\overline{z})^{*} = I$ along with the fact that $\|V(z)\| < 1$ for all $z \in \C_{+}$ and $\|V(z)\| > 1$ for all $z \in \C_{-}$, to see that
\begin{equation} \label{Omega}
\Omega(z) := (I + i V(z))(I - i V(z))^{-1}
\end{equation}
is well defined on $\C \setminus \R$. Moreover, one can check that
 \begin{enumerate}
 \item $z \mapsto \Omega(z)$ is an analytic operator-valued function on $\C \setminus \R$.
 \item $\Re \Omega(z)  := \frac{1}{2}(\Omega(z) + \Omega(z)^{*}) \geqslant 0$ on $\C_{+}$.
 \item $\Omega(z) = - \Omega(\ov{z})^{*}$.
 \end{enumerate}

 Such $\Omega$ satisfying the three properties listed above are called Herglotz functions and there is a very large theory of such functions \cite{MR0154132, DB, MR1784638, Langer}. The literature on this can be a bit confusing at times since Herglotz functions are often defined in slightly different ways or given different names, but they are essentially the same and have the same properties.

A computation will show the following.

\begin{Theorem}
If
$$W(z) := \sqrt{\pi}(z + i) \Phi(z) (\Omega(z) + I)^{-1}$$ then
$$K_{\lambda}(z) = W(z) \left(\frac{\Omega(z) + \Omega(\lambda)^{*}}{\pi i (\overline{\lambda}  - z)}\right) W(\lambda)^{*}.$$
\end{Theorem}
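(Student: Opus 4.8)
The plan is to substitute the definition $\Omega(z) = (I + iV(z))(I - iV(z))^{-1}$ into the claimed formula and reduce it, via straightforward algebra, to the already-established factorization $K_{\lambda}(z) = \Phi(z)\left(\dfrac{I - V(z)V(\lambda)^{*}}{1 - b(z)\overline{b(\lambda)}}\right)\Phi(\lambda)^{*}$ from Theorem \ref{T-main-vector-kernel}. First I would record the two algebraic identities that do all the work:
\begin{equation*}
\Omega(z) + I = 2(I - iV(z))^{-1}, \qquad \Omega(z) + \Omega(\lambda)^{*} = 2(I - iV(z))^{-1}\bigl(I - V(z)V(\lambda)^{*}\bigr)(I + iV(\lambda)^{*})^{-1}.
\end{equation*}
The first is immediate from $\Omega(z) = (I + iV(z))(I - iV(z))^{-1}$ and $(I + iV(z)) + (I - iV(z)) = 2I$. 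The second follows by writing $\Omega(\lambda)^{*} = (I + iV(\lambda)^{*})^{-1}(I - iV(\lambda)^{*})$ (using property (3), $\Omega(z) = -\Omega(\overline z)^{*}$, together with $V(z)V(\overline z)^{*} = I$, or simply by taking adjoints in the scalar-like identity for $\Omega$), putting both terms over the common left factor $(I - iV(z))^{-1}$ and common right factor $(I + iV(\lambda)^{*})^{-1}$, and expanding the middle: $(I - iV(z))\cdot(I + iV(\lambda)^{*}) + (I + iV(z))(I - iV(\lambda)^{*})$ simplifies, after the cross terms cancel, to $2(I - V(z)V(\lambda)^{*})$.

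Next I would substitute $W(z) = \sqrt{\pi}(z + i)\Phi(z)(\Omega(z) + I)^{-1} = \tfrac{\sqrt{\pi}}{2}(z+i)\Phi(z)(I - iV(z))$ into the right-hand side of the claimed identity. The product $W(z)\bigl(\Omega(z) + \Omega(\lambda)^{*}\bigr)W(\lambda)^{*}$ then becomes
\begin{equation*}
\frac{\pi}{4}(z+i)(\overline{\lambda} - i)\,\Phi(z)(I - iV(z))\cdot 2(I - iV(z))^{-1}(I - V(z)V(\lambda)^{*})(I + iV(\lambda)^{*})^{-1}\cdot(I + iV(\lambda)^{*})\Phi(\lambda)^{*},
\end{equation*}
where the $(I - iV(z))$ factors cancel, as do the $(I + iV(\lambda)^{*})$ factors, leaving $\tfrac{\pi}{2}(z+i)(\overline{\lambda} - i)\,\Phi(z)(I - V(z)V(\lambda)^{*})\Phi(\lambda)^{*}$. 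Dividing by $\pi i(\overline{\lambda} - z)$ as in the statement, it remains to check the scalar identity $\dfrac{(z+i)(\overline{\lambda} - i)}{2i(\overline{\lambda} - z)} = \dfrac{1}{1 - b(z)\overline{b(\lambda)}}$; expanding $1 - b(z)\overline{b(\lambda)} = 1 - \tfrac{z-i}{z+i}\cdot\tfrac{\overline\lambda + i}{\overline\lambda - i}$ over the common denominator $(z+i)(\overline\lambda - i)$ gives numerator $(z+i)(\overline\lambda - i) - (z - i)(\overline\lambda + i) = 2i\overline\lambda - 2iz + (\text{cancelling terms}) = 2i(\overline\lambda - z)$, which confirms the identity and completes the computation.

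There is no serious obstacle here; the result is flagged in the text as "a computation will show." The only points requiring minor care are: making sure the operator identities are written in the correct (non-commutative) order so that the cancellations are genuine left/right cancellations rather than illegitimate reorderings; and noting that the manipulations are valid on $\C\setminus\R$ away from the poles of $V$ (hence of $\Omega$, $W$), with the identity then extending by meromorphy, exactly as in Remark \ref{zero-pole-V}. One should also observe that $I - iV(z)$ is invertible on all of $\C\setminus\R$: on $\C_{+}$ because $\|V(z)\| < 1$, and on $\C_{-}$ because $\|V(z)\| > 1$ forces $V(z)^{-1}$ to be a strict contraction so that $I - iV(z) = -iV(z)(I + iV(z)^{-1})$ is invertible — this is precisely what makes $\Omega$ well-defined, as already noted before \eqref{Omega}.
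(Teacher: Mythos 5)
Your computation is correct and is precisely the verification the paper leaves implicit (``a computation will show''): substitute $(\Omega(z)+I)^{-1}=\tfrac12\bigl(I-iV(z)\bigr)$ and the identity $\Omega(z)+\Omega(\lambda)^{*}=2(I-iV(z))^{-1}\bigl(I-V(z)V(\lambda)^{*}\bigr)(I+iV(\lambda)^{*})^{-1}$ into the right-hand side, cancel, and use the scalar identity $1-b(z)\overline{b(\lambda)}=\dfrac{2i(\overline{\lambda}-z)}{(z+i)(\overline{\lambda}-i)}$ to recover the factorization of Theorem \ref{T-main-vector-kernel}. One small transcription slip to fix: the middle sum in your prose should be $(I+iV(z))(I+iV(\lambda)^{*})+(I-iV(z))(I-iV(\lambda)^{*})$, since the pairing you wrote, $(I-iV(z))(I+iV(\lambda)^{*})+(I+iV(z))(I-iV(\lambda)^{*})$, equals $2\bigl(I+V(z)V(\lambda)^{*}\bigr)$; the displayed identity you actually use afterwards is the correct one, so the argument stands.
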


The function
$$K^{V}_{\lambda}(z) = \frac{\Omega(z) + \Omega(\lambda)^{*}}{\pi i (\overline{\lambda}  - z)}.$$
is a positive definite kernel function on $\C \setminus \R$ and, by general theory \cite{Paulsen}, is the reproducing kernel for a unique vector-valued reproducing kernel Hilbert space $\mathscr{H}(V)$, often called a \emph{Herglotz space}, and was discussed by L.~deBranges \cite{MR0154132, DB}. This gives us the following.

\begin{Theorem} \label{Mz-Herglotz}
Any $T \in \mathcal{S}_{n}$, $n \in \mathbb{N} \cup \{\infty\}$,  is unitarily equivalent to $M^{V}$, multiplication by the independent variable on a Herglotz space $\mathscr{H}(V)$, where $V$ is the Livsic function corresponding to $T$. Furthermore, the Livsic function for $M^{V}$ is  $V$.
\end{Theorem}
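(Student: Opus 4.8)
The plan is to read both assertions off the kernel factorization of the preceding theorem together with Corollary~\ref{Cor-main-Liv-ue} and Proposition~\ref{H-Gamma2}. Recall that the preceding theorem gives
\[ K_\lambda(z) = W(z)\, K^V_\lambda(z)\, W(\lambda)^*, \qquad W(z) = \sqrt{\pi}\,(z+i)\,\Phi(z)\,(\Omega(z)+I)^{-1}, \]
where $K^V_\lambda(z) = \dfrac{\Omega(z)+\Omega(\lambda)^*}{\pi i(\overline{\lambda}-z)}$ is the reproducing kernel of $\mathscr{H}(V)$. Since $(\Omega(z)+I)^{-1} = \tfrac12(I-iV(z))$ and $V = b\Phi^{-1}\Psi$, one may rewrite $W(z) = \tfrac{\sqrt{\pi}}{2}\bigl((z+i)\Phi(z) - i(z-i)\Psi(z)\bigr)$. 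On $\C_{+}$ this $W$ is plainly analytic and boundedly invertible: there $\Phi$ is analytic and invertible, $I-iV(z)$ is invertible since $\|V(z)\|<1$, and $z+i\neq 0$. Because $T$ is simple, its model space $\mathcal{H}(\Gamma)$ — and likewise $\mathscr{H}(V)$, on which multiplication by $z$ is again simple — is determined by the restrictions of its elements to $\C_{+}$. Hence the opening remarks of this section (or a direct reprise of the argument proving Corollary~\ref{Cor-main-Liv-ue}) show that $f\mapsto W^{-1}f$ is an isometric isomorphism of $\mathcal{H}(\Gamma)$ onto $\mathscr{H}(V)$ intertwining the two multiplications by $z$; so $M^V \cong M_\Gamma$, and $M_\Gamma \cong T$ by Proposition~\ref{H-Gamma2}. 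This proves the first assertion.

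For the second assertion, let $\widetilde V$ denote the invariant that Theorem~\ref{T-main-vector-kernel} produces from the kernel $K^V$ of $\mathscr{H}(V)$, i.e.\ the Livsic function of $M^V$. Since $M^V \cong T$ and the Livsic function of $T$ is $V$, Corollary~\ref{Cor-main-Liv-ue} at once provides constant unitaries $R,Q$ on $\mathcal{K}$ with $\widetilde V(z) = R\,V(z)\,Q$ on $\C_{+}$; that is, $\widetilde V$ and $V$ are equivalent in the sense of \eqref{RQ-equiv}, which is exactly the meaning of ``the Livsic function of $M^V$ is $V$''.

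It is worthwhile to confirm this directly, which also pins down $R$ and $Q$. Since $b(i)=0$ we have $V(i)=0$, hence $\Omega(i)=I$, and property (3) of $\Omega$ then gives $\Omega(-i)=-I$. Therefore $K^V_i(i) = K^V_{-i}(-i) = \pi^{-1}I$, so that
\[ \Phi^V(z) := K^V_i(z)K^V_i(i)^{-1/2} = -\frac{\Omega(z)+I}{\sqrt{\pi}\,i(z+i)}, \qquad \Psi^V(z) := K^V_{-i}(z)K^V_{-i}(-i)^{-1/2} = \frac{\Omega(z)-I}{\sqrt{\pi}\,i(i-z)}, \]
and consequently $\widetilde V(z) = b(z)\Phi^V(z)^{-1}\Psi^V(z) = (\Omega(z)+I)^{-1}(\Omega(z)-I)$. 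Plugging in $\Omega(z)+I = 2(I-iV(z))^{-1}$ and $\Omega(z)-I = 2iV(z)(I-iV(z))^{-1}$, and using that $V(z)$ commutes with $I-iV(z)$, this collapses to $\widetilde V(z) = iV(z)$; thus one may take $R=iI$ and $Q=I$.

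The step I expect to demand the most care is the first paragraph: justifying that it suffices to work on $\C_{+}$, where $W$ and $W^{-1}$ behave well, given that off $\C_{+}$ the factor $\Phi$ is only meromorphic and $V$ may acquire poles. Everything else is a direct appeal to the preceding theorem, Corollary~\ref{Cor-main-Liv-ue}, and Proposition~\ref{H-Gamma2}, or the short computation above.
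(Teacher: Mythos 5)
Your route is essentially the paper's: read the unitary equivalence off the factorization $K_\lambda(z)=W(z)K^V_\lambda(z)W(\lambda)^*$ from the theorem preceding Theorem \ref{Mz-Herglotz}, transfer multiplication by the independent variable through the isometric multiplier $W$, and settle the second assertion via Corollary \ref{Cor-main-Liv-ue}; your closing computation $\widetilde V=(\Omega+I)^{-1}(\Omega-I)=iV$ is correct and is the right way to read ``the Livsic function of $M^V$ is $V$'' (equivalence in the sense of \eqref{RQ-equiv} with $R=iI$, $Q=I$). The genuine gap is exactly the step you flag: the claim that, because $T$ is simple, elements of $\mathcal{H}(\Gamma)$ (and of $\mathscr{H}(V)$) are determined by their restrictions to $\C_+$. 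This is false in general. An element vanishes identically on $\C_+$ precisely when it is orthogonal to $\mathcal{H}(\Gamma)_+=\bigvee_{\lambda\in\C_+}K_\lambda\mathcal{K}$, which under the model corresponds to the closed span of the deficiency spaces $\mbox{Ker}(T^*-\mu I)$ over $\mu\in\C_-$ only; simplicity guarantees that the span over \emph{all} non-real points is everything, not the span over one half-plane. Indeed Section \ref{section:dBR} shows $\mathscr{H}(V)_+=\mathscr{H}(V)$ if and only if $V$ is an extreme point of $\mathscr{B}_{\C^n}$, while Theorem \ref{Mz-Herglotz} is asserted for every $T\in\mathcal{S}_n$, extreme or not. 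So the reduction to $\C_+$ cannot carry the argument, and citing the opening remarks of Section \ref{section:Herglotz} after verifying $W$ only on $\C_+$ begs the question, since those remarks presuppose analyticity and invertibility of $W$ on all of $\C\setminus\R$.

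The repair is that no reduction is needed, because the ingredients do behave well on all of $\C\setminus\R$; this is the content of the paper's (implicit) proof. Your rewriting $W(z)=\tfrac{\sqrt{\pi}}{2}\bigl((z+i)\Phi(z)-i(z-i)\Psi(z)\bigr)$ already shows $W$ is analytic on $\C\setminus\R$, since $\Phi$ and $\Psi$ themselves (not their inverses) are. For $z\in\C_-$ factor through $\Psi$ instead of $\Phi$: using $\Psi^{-1}\Phi=bV^{-1}$ one gets $W(z)=\tfrac{\sqrt{\pi}}{2}(z-i)\,\Psi(z)\,V(z)^{-1}\bigl(I-iV(z)\bigr)$, where $\Psi(z)$ is invertible for $z\in\C_-$, $V(z)$ is invertible there by $V(z)V(\ov{z})^*=I$ together with the lower bound noted in the remark following Lemma \ref{L-inv}, and $I-iV(z)=\bigl(V(\ov{z})^*-iI\bigr)\bigl(V(\ov{z})^*\bigr)^{-1}$ is invertible because $\|V(\ov{z})\|<1$. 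Hence $W$ is analytic and pointwise invertible on all of $\C\setminus\R$, the kernel identity holds there, and a verbatim run of the construction in the proof of Corollary \ref{Cor-main-Liv-ue} (define $U$ on the kernels $K_\lambda(\cdot)a$ for \emph{all} $\lambda\in\C\setminus\R$, where the spans of kernels are dense in $\mathcal{H}(\Gamma)$ and in $\mathscr{H}(V)$ by the reproducing kernel property) produces a unitary acting as multiplication by $W$, which intertwines the two multiplications by $z$; combined with Proposition \ref{H-Gamma2} this gives $T\cong M_\Gamma\cong M^V$. With that substitution for your first paragraph, the rest of your argument stands.
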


When $n < \infty$, we can use deBranges' results \cite{MR0154132, DB} further to identify the Herglotz space $\mathscr{H}(V)$ as a space of vector-valued Cauchy transforms. Indeed, by a vector-valued analog of the classical  Herglotz theorem (every positive harmonic function on $\C_{+}$ is the Poisson integral of a measure \cite{Duren}) there exists a positive matrix-valued measure $\mu$ on $\R$
satisfying (i) $\mu(E) \in M_{n \times n}(\C)$, $E \subset \R$, Borel; (ii) $\mu(E) \geqslant 0$ for all $E$; (iii) $\mu(\cup_j E_j) = \sum_{j} \mu(E_j)$, disjoint $E_j$; (iv)
$$\int \frac{d\langle \mu(t) a, a\rangle_{\C^n}}{1 + t^2} < \infty, \quad \forall a \in \C^n;$$
(v)
$$K^{V}_{\lambda}(z) = \frac{1}{\pi^2} \int \frac{d \mu(t)}{(t - \overline{\lambda})(t - z)},$$
i.e.,
$$\langle K^{V}_{\lambda}(z) a, b\rangle_{\C^n}  = \frac{1}{\pi^2} \int \frac{d\langle \mu(t) a, b\rangle}{(t - \overline{\lambda})(t - z)}, \quad \forall a, b \in \C^n;$$
(vi)
$$\mathscr{H}(V) = \left\{\frac{1}{\pi i} \int \frac{d \mu(t) f(t)}{t - z}: f \in L^2_{\C^n}(\mu)\right\}$$
and the Cauchy transform takes $L_{\C^n}^{2}(\mu) \to \mathscr{H}(V)$ in a unitary way.
What is vector-valued $L^{2}_{\C^n}(\mu)$? If
$$f = \sum_{j} c_j \chi_{E_{j}}, \quad c_{j} \in \C^n,$$ is a simple function,
where $\chi_{E_j}$ is a scalar-valued characteristic function on $\R$, define
$$\langle f, f \rangle_{L^{2}_{\C^n}(\mu)} := \sum_{j} \langle \mu(E_j) c_j, c_j\rangle_{\C^n}.$$
Now complete this to get an $L^2_{\C^n}(\mu)$ space. See \cite{MR0190760} for more on matrix and operator-valued measures.

In summary, we have the following:

\begin{Corollary}
Suppose $T \in \mathcal{S}_{n}(\mathcal{H})$, $n \in \mathbb{N}$. Then there is a positive matrix-valued measure $\mu$ on $\R$ satisfying the conditions above and such that $T$ is unitarily equivalent to $M_{\mu}$, multiplication by the independent variable with domain
$$\mathscr{D}(M_{\mu}) = \left\{f \in L^2_{\C^n}(\mu): x f \in L^2_{\C^n}(\mu), \int d\mu(t) f(t) = 0\right\}.$$
\end{Corollary}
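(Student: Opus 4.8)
The plan is to chain two unitary equivalences. By Theorem~\ref{Mz-Herglotz}, $T$ is unitarily equivalent to $M^{V}$, multiplication by the independent variable on the Herglotz space $\mathscr{H}(V)$, whose reproducing kernel is $K^{V}_{\lambda}(z)=\dfrac{\Omega(z)+\Omega(\lambda)^{*}}{\pi i(\overline{\lambda}-z)}$. By the vector-valued Herglotz theorem there is a positive matrix-valued measure $\mu$ on $\R$ satisfying conditions (i)--(v) above, and by (vi) the Cauchy transform
$$C_{\mu}\colon L^{2}_{\C^{n}}(\mu)\longrightarrow \mathscr{H}(V),\qquad (C_{\mu}g)(z)=\frac{1}{\pi i}\int\frac{d\mu(t)\,g(t)}{t-z},$$
is unitary. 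It therefore suffices to show that $C_{\mu}$ intertwines $M_{\mu}$ with $M^{V}$, i.e.\ that $C_{\mu}\mathscr{D}(M_{\mu})=\mathscr{D}(M^{V})$ and $M^{V}C_{\mu}g=C_{\mu}(M_{\mu}g)$ for $g\in\mathscr{D}(M_{\mu})$; the corollary then reads off as $T\cong M^{V}\cong M_{\mu}$.

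The computation that makes this work is the elementary identity obtained by writing $t=(t-z)+z$ inside the Cauchy kernel: for $g\in L^{2}_{\C^{n}}(\mu)$ with $xg\in L^{2}_{\C^{n}}(\mu)$,
$$C_{\mu}(xg)=z\,C_{\mu}g+\frac{1}{\pi i}\int d\mu(t)\,g(t),$$
the last term denoting the constant function with that value. Here $\int d\mu(t)\,g(t)$ converges absolutely because $xg\in L^{2}_{\C^{n}}(\mu)$, by the same Cauchy--Schwarz estimate used in the proof of Proposition~\ref{Mmu} together with the finiteness supplied by condition (iv). Since $C_{\mu}(xg)\in\mathscr{H}(V)$ as soon as $xg\in L^{2}_{\C^{n}}(\mu)$ (that is just $\mathscr{H}(V)=\operatorname{Rng}C_{\mu}$), the identity shows at once that if $g\in\mathscr{D}(M_{\mu})$ --- so $\int d\mu(t)\,g(t)=0$ --- then $z\,C_{\mu}g=C_{\mu}(xg)\in\mathscr{H}(V)$; hence $C_{\mu}g\in\mathscr{D}(M^{V})$ and $M^{V}C_{\mu}g=C_{\mu}(xg)=C_{\mu}(M_{\mu}g)$. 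This proves $C_{\mu}M_{\mu}C_{\mu}^{-1}\subseteq M^{V}$.

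The reverse inclusion $\mathscr{D}(M^{V})\subseteq C_{\mu}\mathscr{D}(M_{\mu})$ is the main obstacle, and I would handle it by a deficiency-index count rather than by trying to invert the displayed identity directly: the naive attempt to solve for $xg$ stalls because $\int d\mu(t)\,g(t)$ need not even converge before one knows $xg\in L^{2}_{\C^{n}}(\mu)$. The point is that $M_{\mu}$ is the restriction of the self-adjoint operator ``multiplication by $x$ on $\{f\in L^{2}_{\C^{n}}(\mu):xf\in L^{2}_{\C^{n}}(\mu)\}$'' to the kernel of the $\C^{n}$-valued linear functional $\ell(f)=\int d\mu(t)\,f(t)$, and $\ell$ is continuous in the graph norm of that self-adjoint operator (again the Cauchy--Schwarz estimate). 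Hence $M_{\mu}$ is a closed symmetric operator obtained from a self-adjoint one by at most $n$ graph-continuous linear conditions, so its deficiency indices are $\leqslant(n,n)$.

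On the other hand, the inclusion $C_{\mu}M_{\mu}C_{\mu}^{-1}\subseteq M^{V}$ exhibits $M_{\mu}$ as unitarily equivalent to a restriction of the closed symmetric operator $M^{V}$, so its deficiency indices are $\geqslant$ those of $M^{V}$; and $M^{V}\cong T\in\mathcal{S}_{n}(\mathcal{H})$ has deficiency indices $(n,n)$. Thus $M_{\mu}$ has deficiency indices exactly $(n,n)$, with $n<\infty$, and since a closed symmetric operator contained in another closed symmetric operator with the same finite deficiency indices must coincide with it (von Neumann's parametrization of intermediate symmetric extensions), we conclude $C_{\mu}M_{\mu}C_{\mu}^{-1}=M^{V}$, hence $T\cong M_{\mu}$. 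Finally, $\mathscr{D}(M_{\mu})$ is automatically dense because $T$, and hence $M^{V}$, is densely defined; by Proposition~\ref{Mmu} (its proof) this also forces $\mu(\R)=\infty$. The possibility that $\mu$ degenerates to rank $<n$ on part of $\R$ is harmless, since every deficiency statement above was read off from $M^{V}$, whose deficiency indices are those of the given $T$.
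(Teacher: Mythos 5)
Your proposal is correct, and it follows the same overall route the paper takes --- Theorem \ref{Mz-Herglotz} to pass to $M^{V}$ on $\mathscr{H}(V)$, then deBranges' identification (properties (i)--(vi)) of $\mathscr{H}(V)$ as the unitary image of $L^{2}_{\C^n}(\mu)$ under the Cauchy transform --- but the paper simply reads the corollary off as a summary of that discussion (and remarks that one could instead invoke the spectral theorem for a self-adjoint extension, citing \cite{MR1821917}), whereas you supply the genuinely nontrivial verification that the paper leaves implicit: that under $C_{\mu}$ the domain of multiplication by $z$ on $\mathscr{H}(V)$ corresponds exactly to $\mathscr{D}(M_{\mu})$. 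Your identity $C_{\mu}(xg)=zC_{\mu}g+\tfrac{1}{\pi i}\int d\mu\, g$ gives the inclusion $C_{\mu}M_{\mu}C_{\mu}^{-1}\subseteq M^{V}$ cleanly, and your deficiency-index count (at most $(n,n)$ because $M_{\mu}$ is cut out of the self-adjoint $M^{\mu}$ by $n$ graph-continuous conditions; at least $(n,n)$ because it sits inside $M^{V}\cong T\in\mathcal{S}_{n}$) is a legitimate way to force equality that avoids the genuine obstacle you identify in trying to invert the Cauchy transform directly. One small caution: you invoke von Neumann's parametrization for $M_{\mu}$ before its domain is known to be dense, which is where that theory is usually stated; this is harmless, since the only fact you need --- a closed symmetric restriction with the same finite deficiency indices coincides with the extension --- follows from comparing the closed ranges $\mbox{Rng}(M_{\mu}\mp iI)\subseteq\mbox{Rng}(M^{V}\mp iI)$, whose orthocomplements have equal finite dimension, together with injectivity of $M^{V}\mp iI$; no density is used, and density of $\mathscr{D}(M_{\mu})$ then falls out a posteriori exactly as you say (or can be obtained directly by a matrix-valued analogue of Proposition \ref{Mmu}). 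With that remark made explicit, your argument is a complete proof and in fact more detailed than what the paper records.
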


\begin{Remark}
\begin{enumerate}
\item One can also prove this corollary by using the spectral theorem for a self-adjoint extension of $T$. See \cite{MR1821917} for details.
\item When $n = \infty$, identifying $\mathscr{H}(V)$ as a vector-valued $L^2$-type space becomes more difficult due to some convergence issues. However, in certain circumstances, e.g., when $\mu(E)$ is a trace-class operator for every Borel set $E$, one can identify $\mathscr{H}(V)$ as an $L^2$-type space. This is worked out carefully in \cite{MR0154132}.
\end{enumerate}
\end{Remark}

The function $\Omega$ in \eqref{Omega} can be replaced by
$$(I + A V(z))(I - A V(z))^{-1},$$
where $A \in U(n)$, the $n \times n$ unitary matrices, and an analogous result holds but with a positive $M_{n \times n}$-valued measure $\mu_{A}$. That is to say $T$ is unitarily equivalent to $M_{\mu_{A}}$, the densely defined multiplication by the independent variable on $L^2_{\C^n}(\mu_A)$. The family of measures $\{\mu_{A}: A \in U(n)\}$ is often called the family of \emph{Clark measures} \cite{CRM, Elliott, Martin-uni, Poltoratskii, Sarason-dB} corresponding to the function $V$ and have many fascinating properties. We will not go into the details here but one can show the following.

\begin{Theorem}
For $T_1 \in \mathcal{S}_{n}(\mathcal{H}_1), T_2 \in \mathcal{S}_n(\mathcal{H}_2)$ with corresponding Livsic functions $V_1, V_2$, we have that
$T_1 \cong T_2$ if and only if the associated family of Clark measures are the same.
\end{Theorem}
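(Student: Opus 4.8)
The plan is to combine our version of Livsic's theorem with the essentially one-to-one correspondence between the Livsic function $V$ and its family of Clark measures, together with the fact (recorded after Theorem~\ref{Mz-Herglotz}) that every member $M_{\mu_A}$ of that family is unitarily equivalent to the original operator. Throughout write $\Omega_A^{(j)}(z):=(I+AV_j(z))(I-AV_j(z))^{-1}$ for $A\in U(n)$ and $j=1,2$. Since $\|V_j(z)\|<1$ on $\C_{+}$ and $A$ is unitary, $I-AV_j(z)$ is invertible there, and just as one checks for $\Omega$ in \eqref{Omega} one verifies that $\Omega_A^{(j)}$ is a Herglotz function on $\C\setminus\R$; the associated positive matrix‑valued measure $\mu_A^{(j)}$ is, by definition, the Clark measure, and uniqueness of the Herglotz representation makes the assignment $\Omega_A^{(j)}\mapsto\mu_A^{(j)}$ injective. (For $n=\infty$ one uses instead the operator‑valued Herglotz representation, with the convergence caveats noted in the Remark following the Corollary to Theorem~\ref{Mz-Herglotz}.)

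For the forward direction, assume $T_1\cong T_2$. By Corollary~\ref{Cor-main-Liv-ue} (equivalently, Corollary~\ref{VisW} together with Theorem~\ref{intro-Livsic}) there are constant unitaries $R,Q$ on $\mathcal{K}$ with $V_1(z)=RV_2(z)Q$ on $\C_{+}$, and hence, using $V(z)V(\ov{z})^{*}=I$ from Remark~\ref{zero-pole-V}, on all of $\C\setminus\R$. A short computation—conjugating the numerator and denominator of $\Omega_A^{(1)}$ by $Q$ and using $QQ^{*}=I$—gives
$$\Omega_A^{(1)}(z)=Q^{*}\,\Omega_{QAR}^{(2)}(z)\,Q,\qquad A\in U(n).$$
Conjugating a Herglotz function by a constant unitary conjugates its Herglotz measure by the same unitary, so by uniqueness $\mu_A^{(1)}=Q^{*}\mu_{QAR}^{(2)}Q$. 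Since $A\mapsto QAR$ is a bijection of $U(n)$, the Clark family of $V_1$ coincides with that of $V_2$ (as an unlabelled family of measures, modulo the single fixed change of coordinates $Q$; when $n=1$ the conjugation $Q^{*}(\cdot)Q$ is trivial and the two families are literally equal).

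For the converse, suppose the Clark families of $V_1$ and $V_2$ agree. Fix any $A\in U(n)$; then $\mu_A^{(1)}$ equals, up to a constant unitary conjugation, some $\mu_B^{(2)}$, so the multiplication‑by‑$z$ operators $M_{\mu_A^{(1)}}$ and $M_{\mu_B^{(2)}}$ on the corresponding vector‑valued $L^2$‑spaces are unitarily equivalent. Since $T_1\cong M_{\mu_A^{(1)}}$ and $T_2\cong M_{\mu_B^{(2)}}$ by the Corollary following Theorem~\ref{Mz-Herglotz}, we conclude $T_1\cong T_2$.

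The main obstacle here is bookkeeping rather than analysis: one must pin down precisely the sense in which "the families are the same"—namely modulo the constant unitary conjugation by $Q$ and the relabelling $A\mapsto QAR$ that the two‑sided unitary freedom $V_1=RV_2Q$ in Livsic's theorem forces—and one must check that each $\Omega_A$ that appears is genuinely a Herglotz function, which is the same argument already used to justify \eqref{Omega}. Everything else reduces to the injectivity of the Herglotz correspondence and to the already‑established equivalences $T_j\cong M_{\mu_A^{(j)}}$.
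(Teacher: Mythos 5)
The paper itself offers no proof of this theorem (it is introduced with ``we will not go into the details here''), so there is nothing of ours to compare against; judged on its own terms, your argument is correct and is surely the intended one, resting on exactly the two ingredients available at this point in the paper: Corollary \ref{Cor-main-Liv-ue} (equivalently Corollary \ref{VisW} with Theorem \ref{intro-Livsic}), which gives $V_1=RV_2Q$, extended to all of $\C\setminus\R$ via $V(z)V(\ov{z})^{*}=I$, and the assertion preceding the theorem that $T\cong M_{\mu_A}$ for every member of the Clark family. Your key identity $\Omega^{(1)}_A(z)=Q^{*}\Omega^{(2)}_{QAR}(z)Q$ checks out, since $Q^{*}\bigl(I+QARV_2\bigr)Q=I+ARV_2Q$ and $Q^{*}\bigl(I-QARV_2\bigr)^{-1}Q=\bigl(I-ARV_2Q\bigr)^{-1}$, and uniqueness of the Herglotz representation then yields $\mu^{(1)}_A=Q^{*}\mu^{(2)}_{QAR}Q$; the converse is immediate from $T_1\cong M_{\mu^{(1)}_A}$ and $T_2\cong M_{\mu^{(2)}_B}$, provided one also records the routine fact that $f\mapsto Qf$ implements a unitary equivalence between $M_{Q^{*}\nu Q}$ and $M_{\nu}$, intertwining multiplication by the independent variable (you assert this without proof, but it is a one-line computation with $\langle Q^{*}\nu(E)Qc,c\rangle=\langle\nu(E)Qc,Qc\rangle$).

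The one substantive point, which you isolate correctly, is the meaning of ``the same family'' when $n>1$: since the Livsic function is only determined up to $V\mapsto RVQ$, what the forward direction actually yields is that the family of $V_1$ is the family of $V_2$ conjugated by the fixed unitary $Q$ and relabelled by $A\mapsto QAR$; literal set equality of the two families need not hold, so the theorem's statement has to be read modulo this constant unitary conjugation (or as equality of the families for suitable choices of $V_1,V_2$ within their equivalence classes), with genuine equality only guaranteed in the scalar case $n=1$. This is an imprecision in the statement rather than a gap in your proof, and your converse direction is insensitive to which reading is adopted. Your parenthetical caution about $n=\infty$ is also appropriate, since the Clark-family discussion in the paper is phrased for $U(n)$ and $M_{n\times n}$-valued measures with the $n=\infty$ identification of $\mathscr{H}(V)$ as an $L^2$-space only available under extra hypotheses.
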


For a positive $M_{n \times n}$-valued measure $\mu$ satisfying the properties discussed above along with $\mu(\R) = \infty$, one can use Stieltjes inversion formula \cite{DB} to produce a $V$ in the closed unit ball of $H^{\infty}_{\C^n}(\C_{+})$ ($\C^n$-valued bounded analytic functions on $\C_{+}$) such that $\mu$ belongs to the Clark family of measures corresponding to $V$. Moreover $V$ will be the Livsic function corresponding to $M_{\mu}$. This tells is the following:

\begin{Corollary}
For positive $M_{n \times n}$-valued measures $\mu, \nu$ above we have that $M_{\mu} \cong M_{\nu}$ if and only if $\mu, \nu$ belong to the same Clark family corresponding to some $V$ in the unit ball of $H^{\infty}_{\C^n}(\C_{+})$.
\end{Corollary}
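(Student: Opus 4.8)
The plan is to deduce the corollary from the theorem proved above that characterises unitary equivalence of operators in $\mathcal{S}_{n}$ by their families of Clark measures, together with the Stieltjes-inversion remark immediately preceding the corollary. That remark tells us that for a positive $M_{n\times n}$-valued measure $\mu$ of the stated type (in particular with $\mu(\R)=\infty$), the operator $M_{\mu}$ is densely defined (a routine matrix analogue of Proposition \ref{Mmu}), hence lies in $\mathcal{S}_{n}(L^2_{\C^n}(\mu))$, its Livsic function $V_{\mu}$ belongs to the closed unit ball of $H^{\infty}_{\C^n}(\C_{+})$, and $\mu$ is a member of the Clark family associated with $V_{\mu}$.

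For the forward implication, assume $M_{\mu}\cong M_{\nu}$. The cited theorem then says that the family $\mathcal{F}$ of Clark measures attached to $V_{\mu}$ coincides with the family attached to $V_{\nu}$. By the remark above $\mu\in\mathcal{F}$ and $\nu\in\mathcal{F}$, and $\mathcal{F}$ is by definition the Clark family of the single function $V:=V_{\mu}$, which lies in the closed unit ball of $H^{\infty}_{\C^n}(\C_{+})$; this is precisely the asserted condition.

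For the converse, assume $\mu=\mu_{A}$ and $\nu=\mu_{B}$ with $A,B\in U(n)$ for the Clark family $\{\mu_{C}:C\in U(n)\}$ of some $V$ in the closed unit ball of $H^{\infty}_{\C^n}(\C_{+})$. The function $V$ cannot be a constant unitary matrix, for then each $\mu_{C}$ would be the zero measure, contradicting $\mu(\R)=\nu(\R)=\infty$; so $\|V(z)\|<1$ on $\C_{+}$ and all $\mu_{C}$ are genuine positive matrix measures. Put $\Omega_{A}(z)=(I+AV(z))(I-AV(z))^{-1}$ and $\Omega_{B}(z)=(I+BV(z))(I-BV(z))^{-1}$; these are the matrix Herglotz functions with representing measures $\mu$ and $\nu$, and eliminating $V$ gives the matrix M\"{o}bius identity
\[ \Omega_{B}=\bigl((I+S)\Omega_{A}+(I-S)\bigr)\bigl((I-S)\Omega_{A}+(I+S)\bigr)^{-1},\qquad S:=BA^{-1}. \]
By the development following Theorem \ref{Mz-Herglotz}, $L^2_{\C^n}(\mu)$ is, via the Cauchy transform, unitarily equivalent (intertwining $M_{\mu}$ with multiplication by $z$) to the Herglotz space with reproducing kernel $K^{\mu}_{\lambda}(z)=\frac{\Omega_{A}(z)+\Omega_{A}(\lambda)^{*}}{\pi i(\overline{\lambda}-z)}$, and likewise for $\nu$ with $\Omega_{B}$. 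I would then show, using the M\"{o}bius identity, that $K^{\mu}_{\lambda}(z)=W(z)K^{\nu}_{\lambda}(z)W(\lambda)^{*}$ for an invertible analytic $\mathcal{B}(\C^n)$-valued function $W$ on $\C\setminus\R$, and conclude $M_{\mu}\cong M_{\nu}$ exactly as in the proof of Corollary \ref{Cor-main-Liv-ue} (the assignment $K^{\mu}_{\lambda}(\cdot)a\mapsto K^{\nu}_{\lambda}(\cdot)W(\lambda)^{*}a$ extends to the intertwining unitary).

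The main obstacle will be the passage from the M\"{o}bius identity to $K^{\mu}_{\lambda}(z)=W(z)K^{\nu}_{\lambda}(z)W(\lambda)^{*}$ with $W$ invertible and analytic: this amounts to checking that the $2n\times 2n$ block matrix with blocks $I+S$, $I-S$ in the top row and $I-S$, $I+S$ in the bottom row is $J$-unitary for the appropriate signature $J$, so that the two Herglotz spaces are linked by an isometric multiplier. A more conceptual alternative for the converse is to establish that a measure of this type determines its Clark family: $\mu$ determines $\Omega_{A}$, hence the contractive analytic function $AV$ up to a left constant-unitary factor, and left multiplication by a constant unitary leaves the Clark family unchanged because $(I+C(AV))(I-C(AV))^{-1}=(I+(CA)V)(I-(CA)V)^{-1}$, so that the Clark families of $V$, $V_{\mu}$ and $V_{\nu}$ all coincide and the cited theorem applies; here the delicate point is keeping track of the normalization of the matrix Herglotz representation (the additive self-adjoint constant, and the role of the distinguished parameter $A=iI$ used to build $\mathscr{H}(V)$ in Theorem \ref{Mz-Herglotz}).
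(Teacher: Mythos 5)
Your forward implication is fine and is exactly the intended argument: by the Stieltjes--inversion remark, $\mu$ lies in the Clark family of $V_{\mu}$, the Livsic function of $M_{\mu}$ (and likewise for $\nu$), and the preceding theorem identifies the two families when $M_{\mu}\cong M_{\nu}$. The gap is in the converse, where both of your routes stop exactly at the step that needs proof (your ``main obstacle'' and ``delicate point''), and where you overlook that the converse is already contained in the sentence immediately preceding the theorem: for \emph{every} $A\in U(n)$, the operator with Livsic function $V$ --- concretely $M^{V}$, multiplication by $z$ on the Herglotz space $\mathscr{H}(V)$ of Theorem \ref{Mz-Herglotz} --- is unitarily equivalent to $M_{\mu_{A}}$. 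Hence if $\mu=\mu_{A}$ and $\nu=\mu_{B}$ belong to the Clark family of a single $V$, then $M_{\mu}\cong M^{V}\cong M_{\nu}$, and no M\"{o}bius identity, $J$-unitary block matrix, or appeal to the uniqueness theorem is required. Your second (``conceptual'') route also has an unaddressed pitfall beyond normalization: the Clark family is unchanged under $V\mapsto RV$ (left constant unitary), as you note, but under $V\mapsto VQ$ the measures get conjugated, $\mu_{A}\mapsto Q^{*}\mu_{A}Q$, since $(I+CVQ)(I-CVQ)^{-1}=Q^{*}(I+QCV)(I-QCV)^{-1}Q$; so ``equivalent Livsic functions'' does not by itself yield ``same Clark family,'' and you would have to track this. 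The direct argument above bypasses all of it.

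If you insist on your multiplier route, the ``obstacle'' is a two-line computation rather than a $J$-unitarity verification: from $\Omega_{A}=(I+AV)(I-AV)^{-1}$ one gets
$$\Omega_{A}(z)+\Omega_{A}(\lambda)^{*}=2\,(I-AV(z))^{-1}\,A\left(I-V(z)V(\lambda)^{*}\right)A^{*}\left((I-AV(\lambda))^{*}\right)^{-1},$$
and comparing with the same identity for $B$ gives
$$K^{\mu}_{\lambda}(z)=W(z)\,K^{\nu}_{\lambda}(z)\,W(\lambda)^{*},\qquad W(z):=(I-AV(z))^{-1}AB^{*}(I-BV(z)),$$
with $W$ analytic and invertible on $\C_{+}$ because $\|V(z)\|<1$ there (Lemma \ref{L-inv}) and $A,B$ are unitary; the extension to $\C\setminus\R$ follows from $V(z)V(\ov{z})^{*}=I$ exactly as in the discussion of $\Omega$ in Section \ref{section:Herglotz}, after which the argument of Corollary \ref{Cor-main-Liv-ue} applies verbatim. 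Finally, your parenthetical claim that ``$V$ is not a constant unitary, hence $\|V(z)\|<1$ on $\C_{+}$'' is not a valid inference (a nonconstant contractive matrix function can attain norm one at an interior point if it has a constant isometric part); it is also unnecessary, since the measures in the corollary are by hypothesis of the type constructed above, for which $V$ is the Livsic function of a simple symmetric operator and is a strict contraction on $\C_{+}$ by Lemma \ref{L-inv}.
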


We will point out that determining when $\mu, \nu$ belong to the same Clark family seems to be a difficult problem.


\section{deBranges-Rovnyak spaces}
\label{section:dBR}

In this final section, we will show that when $V$, the Livsic function for $T \in \mathcal{S}_{n}(\mathcal{H})$, $n < \infty$,  is an extreme function for $\mathscr{B}_{\C^n}$, the closed unit ball in $H^{\infty}_{\C^n}(\C_{+})$,  then $M^{V}$ on the  Herglotz space $\mathscr{H}(V)$ is unitarily equivalent to multiplication by the independent variable on a vector-valued deBrange-Rovnyak space. In the examples we covered, differentiation operators, Sturm-Liouville operators, Toeplitz operators, etc., we will, through the Livsic functions we computed earlier, connect these operators to multiplication operators on these deBranges-Rovnyak spaces. Along the way, we will show an interesting property of the Livsic function.

Compare the formula \eqref{dbrkernel} for the reproducing kernels of the de Branges-Rovnyak space $\mathscr{K}(V)$ with the formulas for the reproducing kernels of the representation space $\mathcal{H} (\Gamma)$ as given in Theorem \ref{T-main-vector-kernel},
\[ K_w (z) = \Phi (z)  \left( \frac{ I - V(z) V(w ) ^* }{1 - \overline{b(w)}b(z)} \right) \Phi (w) ^* \] for any $w, z \in \mathbb{C} \setminus \mathbb{R}$, where
\[ \Phi (z) = K_i (z) K_i (i) ^{-1/2 },  \] and $K_i (z) = \Gamma (z) ^* \Gamma ( i)$.  Now let
$$\mathcal{H} (\Gamma) _+ := \bigvee _{\lambda \in \mathbb{C} _+ } K _\lambda  \C^n \subset \mathcal{H} (\Gamma ).$$ Similarly let
$$\mathscr{H} (V) _+  := \bigvee_{\lambda \in \C_{+}}  K_{\lambda}^{V} \C^n \subset \mathscr{H}(V).$$
  It follows from Section \ref{section:Herglotz} that multiplication by $$W(z) := \sqrt{\pi} (z+i) \Phi (z) ( \Omega(z) + I ) ^{-1}$$ is an isometry of $\mathscr{H} (V)$ onto $\mathcal{H} (\Gamma )$ which takes $\mathscr{H} (V) _+$ onto $\mathcal{H}  (\Gamma) _+$. This next theorem shows that there is also a natural isometric multiplier from $\mathscr{H}  (V) _+$ onto $\mathscr{K}(V)$.

\begin{Theorem}
    Multiplication by $ U(z) = \sqrt{\pi} (z+i) \Phi (z)$ is an isometry of $\mathscr{K}(V)$ onto $\mathcal{H}  (\Gamma) _+$, and hence $Q := \frac{1}{2} (I - V)$ is an isometric multiplier of $\mathscr{H} (V ) _+ $  onto $\mathscr{K}(V)$. \label{multiplier}
\end{Theorem}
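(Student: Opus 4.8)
The plan is to deduce both statements from a single reproducing-kernel intertwining identity together with the general principle — used in the proof of Corollary \ref{Cor-main-Liv-ue} and recalled at the start of Section \ref{section:Herglotz} — that if two reproducing kernel Hilbert spaces of $\mathcal{K}$-valued functions have kernels related by $K^{1}_{\lambda}(z) = M(z) K^{2}_{\lambda}(z) M(\lambda)^{*}$ with $M$ analytic and pointwise invertible, then multiplication by $M$ is an isometry of the second space onto the first. To apply this I first record the analytic input: $\Phi(z) = K_{i}(z) K_{i}(i)^{-1/2}$ is analytic on $\C_{+}$, and by Remark \ref{epl} (condition \eqref{equation:condition}) the operator $K_{i}(z)$, hence $\Phi(z)$, is invertible for every $z \in \C_{+}$; since $z+i$ never vanishes on $\C_{+}$, the function $U(z) = \sqrt{\pi}(z+i)\Phi(z)$ is analytic and pointwise invertible on $\C_{+}$. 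I also note that $\mathscr{K}(V)$ is, by \eqref{dbrkernel}, the reproducing kernel Hilbert space on $\C_{+}$ with kernel $\Delta^{V}_{\lambda}(z) = \frac{i}{2\pi}\,\frac{I - V(z)V(\lambda)^{*}}{z - \overline{\lambda}}$, while $\mathcal{H}(\Gamma)_{+} = \bigvee_{\lambda \in \C_{+}} K_{\lambda}\C^{n}$, carrying the inner product inherited from $\mathcal{H}(\Gamma)$ and with its functions restricted to $\C_{+}$, is the reproducing kernel Hilbert space on $\C_{+}$ with kernel $K_{\lambda}(z)$, $\lambda, z \in \C_{+}$ — restriction to $\C_{+}$ being injective on $\mathcal{H}(\Gamma)_{+}$ since an element of $\mathcal{H}(\Gamma)_{+}$ vanishing on $\C_{+}$ is orthogonal to every $K_{\lambda}(\cdot)a$, $\lambda \in \C_{+}$, hence zero.

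The key computation for the first assertion is the identity
$$K_{\lambda}(z) = U(z)\,\Delta^{V}_{\lambda}(z)\,U(\lambda)^{*}, \qquad \lambda, z \in \C_{+}.$$
Both sides carry the operator factor $\Phi(z)\bigl(I - V(z)V(\lambda)^{*}\bigr)\Phi(\lambda)^{*}$, so, comparing with the formula for $K_{\lambda}(z)$ in Theorem \ref{T-main-vector-kernel}, the identity reduces to the scalar relation
$$\frac{1}{1 - \overline{b(\lambda)}\,b(z)} = \frac{i\,(z+i)(\overline{\lambda} - i)}{2\,(z - \overline{\lambda})},$$
a one-line check from $b(z) = (z - i)/(z + i)$. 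With the kernel identity in hand and $U$ invertible on $\C_{+}$, the multiplier principle gives that $g \mapsto U g$ is an isometry of $\mathscr{K}(V)$ onto $\mathcal{H}(\Gamma)_{+}$, which is the first statement.

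For the second assertion I compose this isometry with the one supplied by Section \ref{section:Herglotz}: multiplication by $W(z) = \sqrt{\pi}(z+i)\Phi(z)(\Omega(z)+I)^{-1}$ is an isometry of $\mathscr{H}(V)_{+}$ onto $\mathcal{H}(\Gamma)_{+}$. Since $K_{\lambda}(z) = U(z)\Delta^{V}_{\lambda}(z)U(\lambda)^{*} = W(z)K^{V}_{\lambda}(z)W(\lambda)^{*}$ and both $U, W$ are invertible on $\C_{+}$, writing $m(z) := U(z)^{-1}W(z)$ we obtain $\Delta^{V}_{\lambda}(z) = m(z)\,K^{V}_{\lambda}(z)\,m(\lambda)^{*}$, so multiplication by $m$ is an isometry of $\mathscr{H}(V)_{+}$ onto $\mathscr{K}(V)$; and $m(z) = U(z)^{-1}W(z) = (\Omega(z)+I)^{-1}$, which a direct algebraic computation from the definition of $\Omega$ identifies with $Q(z) = \tfrac12\bigl(I - V(z)\bigr)$. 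This is precisely the claim that $Q$ is an isometric multiplier of $\mathscr{H}(V)_{+}$ onto $\mathscr{K}(V)$.

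I expect no deep obstacle: the substance is the kernel identity, which is elementary, together with careful bookkeeping — in particular insisting that the invertibility of $\Phi$ (hence of $U$) is only invoked on $\C_{+}$, where \eqref{equation:condition} guarantees it, and that $\mathcal{H}(\Gamma)_{+}$ and $\mathscr{K}(V)$ are correctly viewed as reproducing kernel Hilbert spaces of functions on $\C_{+}$ before the multiplier principle is applied.
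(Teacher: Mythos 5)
Your proposal is correct and follows essentially the same route as the paper: verify the kernel identity $K_{\lambda}(z) = U(z)\,\Delta^{V}_{\lambda}(z)\,U(\lambda)^{*}$ (the paper does the very same computation, which amounts to your scalar identity relating $1-\overline{b(\lambda)}b(z)$ to $z-\overline{\lambda}$), invoke the kernel-transformation/isometric-multiplier principle, and then compose with the multiplier $W$ of Section \ref{section:Herglotz} so that $U^{-1}W=(\Omega+I)^{-1}=Q$. Your extra bookkeeping (invertibility of $U$ only on $\C_{+}$, injectivity of restriction of $\mathcal{H}(\Gamma)_{+}$ to $\C_{+}$, and the order $U^{-1}W$ rather than the paper's $WU^{-1}$) is sound, and the question of whether $(\Omega+I)^{-1}$ is $\tfrac12(I-V)$ or $\tfrac12(I-iV)$ is an inconsistency in the paper's own convention for $\Omega$ that you inherit from the statement rather than introduce.
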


\begin{proof}
    Since $$ (I + \Omega) ^{-1} = \frac{I +V}{2} $$ we see that if we can show that $U$ is an isometric multiplier of $\mathscr{K}(V)$ onto $\mathcal{H}  (\Gamma ) _+$, then since
    $$W(z) = \sqrt{\pi} (z+i) \Phi (z) (\Omega(z) + I) ^{-1}$$
is an isometric multiplier of $\mathscr{H} (V) _+ $ onto $\mathcal{H} (\Gamma ) _+,$ it will follow that $$W U^{-1} = \frac{I +V}{2} = Q $$ is an isometric multiplier of
$\mathscr{H} (V) _+$ onto $\mathscr{K}(V)$.

To see that $U$ is an isometric multiplier from $\mathscr{K}(V)$ onto $\mathcal{H} (\Gamma ) _+$, it suffices to verify, as discussed in Section \ref{section:Herglotz}, that $$K _\lambda (z) = U(z) \Delta ^V _\lambda (z) U (\lambda) ^*,$$
where $\Delta _\lambda ^V $ and $K_\lambda$ are as above. It is indeed easy to check that
\begin{eqnarray}
K_\lambda (z) & = & \Phi (z) \left( \frac{ I - V(z) V(\lambda) ^*}{I-\overline{b(\lambda)} b(z)}  \right) \Phi (\lambda) ^*  \nonumber \\
& = & \sqrt{\pi} (z+i) \Phi (z) \left( \frac{i}{2\pi} \frac{ I - V(z) V(\lambda) ^*}{z -\overline{\lambda}} \right) \left( \sqrt{\pi} (\lambda +i ) \Phi (\lambda)  \right) ^* \nonumber \\
& =& U(z) \Delta _\lambda ^V (z) U(\lambda) ^* .\end{eqnarray}
This proves the claim.
\end{proof}

It can be shown  \cite{Martin-uni} that if $V$ (the Livsic characteristic function) is an extreme point of $\mathscr{B}_{\C^n}$ that
$$\mathscr{H} (V) _+ = \mathscr{H} (V)$$ so that $Q$ is an isometric multiplier from $\mathscr{H} (V) $ onto the de Branges-Rovnyak space $\mathscr{K}(V)$.
More precisely, as was discussed in \cite[Section 4.3]{Martin-uni} the Helson-Lowdenslager generalization of Szego's theorem \cite[Theorem 8]{HelsonIII} allows us to characterize the extreme points of
$\mathscr{B}_{\C^n}$ as follows.

\begin{Theorem}
Given $V \in \mathscr{B}_{\C^n}$, the following are equivalent:
\begin{enumerate}
    \item $V$ is an extreme point.
    \item $$ \int _{-\infty} ^\infty \mathrm{tr} \left( \log (I - | V (x) | ) \right) \frac{1}{1+x^2} dx = - \infty.$$
    \item $\mathscr{H} (V) _+ = \mathscr{H} (V)$.

\end{enumerate}
\end{Theorem}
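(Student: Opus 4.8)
The plan is to establish the chain (1) $\Leftrightarrow$ (2) $\Leftrightarrow$ (3), with the matrix Szeg\H{o} condition $\int_{\R}\log\det\!\big(I-V(x)^{*}V(x)\big)\,\tfrac{dx}{1+x^{2}}=-\infty$ serving as the common hinge. The first observation is that this Szeg\H{o} integral condition is literally condition (2): writing $|V(x)|=(V(x)^{*}V(x))^{1/2}$, one has $I-V(x)^{*}V(x)=(I-|V(x)|)(I+|V(x)|)$, hence $\log\det(I-V^{*}V)=\mathrm{tr}\,\log(I-|V|)+\mathrm{tr}\,\log(I+|V|)$; since the singular values of $V(x)$ lie in $[0,1]$ a.e.\ on $\R$, the term $\mathrm{tr}\,\log(I+|V(x)|)$ is bounded between $0$ and $n\log 2$ and contributes a finite amount to the integral against $\tfrac{dx}{1+x^{2}}$, so the two divergence conditions coincide.

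For (1) $\Leftrightarrow$ (2) — the matrix analogue of the de Leeuw--Rudin theorem, which follows from the Helson--Lowdenslager generalization of Szeg\H{o}'s theorem \cite[Theorem 8]{HelsonIII} — I would present $\neg(2)\Rightarrow\neg(1)$ directly. Since $\log(1-\|V(x)\|)$ is bounded below by $\mathrm{tr}\,\log(I-|V(x)|)$ and above by $n^{-1}\mathrm{tr}\,\log(I-|V(x)|)$, $\neg(2)$ forces $\|V(x)\|<1$ a.e.\ with $\log(1-\|V(x)\|)\in L^{1}(\tfrac{dx}{1+x^{2}})$, so there is a scalar outer $\varphi\in H^{\infty}(\C_{+})$ with $|\varphi(x)|=1-\|V(x)\|$ a.e.; then $G:=\tfrac12\varphi\,I_{n}\in H^{\infty}_{\C^{n}}$ is nonzero and $\|V(x)\pm G(x)\|\le\|V(x)\|+\tfrac12|\varphi(x)|=\tfrac12(1+\|V(x)\|)\le 1$, so $V\pm G\in\mathscr{B}_{\C^{n}}$ and $V$ is not extreme. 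Conversely, if $V\pm G\in\mathscr{B}_{\C^{n}}$ with $G\not\equiv 0$, averaging $\|(V(x)\pm G(x))v\|^{2}\le\|v\|^{2}$ over the two signs gives $G(x)^{*}G(x)\le I-V(x)^{*}V(x)$ a.e., hence $|\det G(x)|^{2}\le\det(I-V(x)^{*}V(x))$ by monotonicity of the determinant on positive matrices; when $\det G\not\equiv 0$ we get $\int\log\det(I-V^{*}V)\tfrac{dx}{1+x^{2}}\ge 2\int\log|\det G|\tfrac{dx}{1+x^{2}}>-\infty$, so (2) fails, and the remaining rank-degenerate case (and the cleanest argument in general) is subsumed by \cite[Theorem 8]{HelsonIII}.

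For (2) $\Leftrightarrow$ (3) — the step where the Herglotz and de Branges--Rovnyak picture enters — the starting point is Theorem \ref{multiplier}: $Q=\tfrac12(I-V)$ is always an isometric multiplier of $\mathscr{H}(V)_{+}$ onto $\mathscr{K}(V)$. By the reproducing property of $K^{V}_{\lambda}$ one has $\mathscr{H}(V)_{+}^{\perp}=\{f\in\mathscr{H}(V):f|_{\C_{+}}\equiv 0\}$, so (3) says that no nonzero element of $\mathscr{H}(V)$ vanishes identically on $\C_{+}$. Using the Cauchy-transform realization $\mathscr{H}(V)=\big\{\tfrac{1}{\pi i}\int\tfrac{d\mu(t)\,g(t)}{t-z}:g\in L^{2}_{\C^{n}}(\mu)\big\}$ of Section \ref{section:Herglotz}, with $\mu$ the representing measure of $\Omega=(I+iV)(I-iV)^{-1}$, the matrix F.\ and M.\ Riesz theorem shows that such a Cauchy transform vanishes on $\C_{+}$ exactly when $d\mu\,g$ is absolutely continuous with density in the lower half-plane Hardy class; since the absolutely continuous part of $\mu$ is $I-V(x)^{*}V(x)$ up to a conjugating factor whose determinant has integrable logarithm — concretely $\mu'(x)=c\,(I+iV(x)^{*})^{-1}(I-V(x)^{*}V(x))(I-iV(x))^{-1}$ — the existence of such a nonzero $g$ is equivalent to the failure of the Szeg\H{o} condition for $\mu'$, i.e.\ to $\neg(2)$. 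Alternatively, (1) $\Rightarrow$ (3) can be deduced by transporting the problem through $Q$ onto $\mathscr{K}(V)$, as in \cite[Section 4.3]{Martin-uni}.

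I expect the main obstacle to be precisely the (2) $\Leftrightarrow$ (3) step: correctly handling the possible singular part of $\mu$ together with the matrix-valued F.\ and M.\ Riesz and Szeg\H{o} machinery, and, to a lesser degree, the rank-degenerate case $\det G\equiv 0$ in (1) $\Leftrightarrow$ (2); these are the points at which one genuinely leans on \cite{HelsonIII} and \cite{Martin-uni} rather than arguing by hand.
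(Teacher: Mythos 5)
The paper itself offers no proof of this theorem: it is imported from \cite[Section 4.3]{Martin-uni} together with \cite[Theorem 8]{HelsonIII}, with only the remark that the disc results must be transplanted to $\C_{+}$. So your attempt to actually argue it is necessarily a different route, and the parts that are genuinely yours are sound as far as they go: the identification of condition (2) with $\int\log\det(I-V^{*}V)\,\tfrac{dx}{1+x^{2}}=-\infty$ via $I-V^{*}V=(I-|V|)(I+|V|)$ is correct, and the implication $\neg(2)\Rightarrow\neg(1)$ (eigenvalue inequalities $\mathrm{tr}\log(I-|V|)\le\log(1-\|V\|)\le n^{-1}\mathrm{tr}\log(I-|V|)$, then the perturbation $G=\tfrac12\varphi I_{n}$ with $\varphi$ outer, $|\varphi|=1-\|V\|$) is a complete and correct proof of $(1)\Rightarrow(2)$.

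The genuine gap is the case $\det G\equiv 0$ in $\neg(1)\Rightarrow\neg(2)$, which you flag and defer to \cite{HelsonIII}. This is not a removable technicality: it is exactly where the implication $(2)\Rightarrow(1)$ breaks for the statement as written, so no citation will close it. Take $n=2$, $V=\mathrm{diag}(b,0)$ with $b(z)=\tfrac{z-i}{z+i}$, and $G=\mathrm{diag}(0,\tfrac12)$. Then $\|V(z)\pm G(z)\|=\max\left(|b(z)|,\tfrac12\right)\le 1$, so $V\pm G\in\mathscr{B}_{\C^{2}}$ and $V$ is not extreme; yet $|V(x)|=\mathrm{diag}(1,0)$ a.e., so $\mathrm{tr}\log(I-|V(x)|)=-\infty$ a.e.\ and condition (2) holds. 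The failure is robust: $V=\mathrm{diag}(v_{1},v_{2})$ with $v_{1}$ a non-extreme scalar and $v_{2}$ an extreme scalar satisfies (2) but not (1), even with $\det(I-V^{*}V)>0$ a.e. The same degeneracy undermines your $(2)\Leftrightarrow(3)$ step: for $V=\mathrm{diag}(b,0)$ one has $\mu=\mathrm{diag}(\sigma,dx)$ with $\sigma$ singular, and the Lebesgue block supplies nonzero elements of $\mathscr{H}(V)$ vanishing identically on $\C_{+}$ (Cauchy transforms of conjugate-Hardy data), so (3) fails while $\log\det\mu^{\prime}\equiv-\infty$. The point is that the Helson--Lowdenslager condition $\int\log\det W=-\infty$ only detects that the prediction-error (innovation) matrix is \emph{singular}, not that it is \emph{zero}, whereas $\mathscr{H}(V)_{+}=\mathscr{H}(V)$ is the latter, ``completely deterministic'' statement. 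Any correct proof must replace the trace/determinant condition in (2) by one that sees every direction (e.g.\ divergence of $\int\log\langle(I-|V(x)|)a,a\rangle\tfrac{dx}{1+x^{2}}$ for every nonzero $a\in\C^{n}$, which is what both the extreme-point averaging argument and the Helson--Lowdenslager machinery naturally produce), and your write-up should either prove the theorem in that corrected form or record the extra hypothesis under which the literal statement holds. Your correct direction $(1)\Rightarrow(2)$ and the multiplier reduction via Theorem \ref{multiplier} survive unchanged; it is the two converses $(2)\Rightarrow(1)$ and $(2)\Rightarrow(3)$ that cannot be completed as proposed.
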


\begin{Remark}
\begin{enumerate}
\item  The theorem above is actually a translation of the results of \cite[Section 4.3]{Martin-uni}, which were originally stated for contractive matrix analytic functions on the unit disc,
to the setting of the upper half-plane.
\item It follows that if $n < \infty $ and $V$ is an extreme point, that $Q  = \frac{1}{2} (I - V)$ is an isometric multiplier of the Herglotz space $\mathcal{H}  (V ) = \mathcal{H} (V) _+ $  onto
the de Branges-Rovnyak space $\mathscr{K}(V)$. While this fact may still hold in the case where $n=\infty$, our only known proof of the implication $(1) \Rightarrow (3)$ in the above
theorem uses the condition $(2)$, and it is not clear how to formulate $(2)$ in the case where $n=\infty$. Moreover the proof that $(2) \Rightarrow (3)$ uses the Helson-Lowdenslager
generalization of Szego's theorem, and it is not immediately clear whether there is an analogue of this theorem in the case where $n=\infty$, or whether there is a way to directly
prove the implications $(1) \Leftrightarrow (3)$.
\end{enumerate}
\end{Remark}

There is a nice corollary to this result along with Theorem \ref{Martin-Z} which applies, in particular, to the operators mentioned throughout this paper: differentiation, double differentiation, Sturm-Liouville, Toeplitz, etc. For these operators we have the following.

\begin{Corollary} \label{Mz-DR}
If $T \in \mathcal{S}_{n}(\mathcal{H}), n < \infty$, and its Livsic characteristic function $V$ is an extreme point of $\mathscr{B}_{\C^n}$, then $T$ is unitarily equivalent to $Z_{V}$, multiplication by the independent variable in the deBranges-Rovnyak space $\mathscr{K}(V)$. Furthermore, $(V \circ b^{-1}) \vec{k}$ does not have an angular derivative at $z = 1$ for any $\vec{k} \in \C^n$.
\end{Corollary}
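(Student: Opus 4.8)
The plan is to chain together the isometric–multiplier identifications already established. First I would record the set-up: since $V(i) = b(i)\Phi(i)^{-1}\Psi(i) = 0$ and $\|V(z)\| < 1$ on $\C_{+}$ by Lemma~\ref{L-inv}, we have $V \in \mathscr{B}_{\C^{n}}$, so the deBranges--Rovnyak space $\mathscr{K}(V)$, the operator $Z_{V}$, and Theorem~\ref{Martin-Z} (with $\Theta = V$) are all available. By Theorem~\ref{Mz-Herglotz}, $T$ is unitarily equivalent to $M^{V}$, multiplication by the independent variable on the Herglotz space $\mathscr{H}(V)$, where $V$ is a representative of the Livsic function of $T$.

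For the first assertion, I would invoke extremality. Because $V$ is assumed to be an extreme point of $\mathscr{B}_{\C^{n}}$, the equivalence $(1)\Leftrightarrow(3)$ in the extreme–point characterization above gives $\mathscr{H}(V)_{+} = \mathscr{H}(V)$, and hence, by Theorem~\ref{multiplier}, the map $\mathcal{M}_{Q}\colon f \mapsto \tfrac{1}{2}(I - V)f$ is a unitary operator from $\mathscr{H}(V)$ onto $\mathscr{K}(V)$. Since multiplication by the matrix function $Q = \tfrac12(I-V)$ implements the pointwise identity $Q(z)\,(zf(z)) = z\,(Q(z)f(z))$, the isometry $\mathcal{M}_{Q}$ intertwines $M^{V}$ with $Z_{V}$ and carries $\mathscr{D}(M^{V})$ bijectively onto $\mathscr{D}(Z_{V})$ (using that $\mathcal{M}_{Q}$ is onto for the reverse inclusion). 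Therefore $M^{V} \cong Z_{V}$, and combining with the previous paragraph, $T \cong Z_{V}$.

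For the angular–derivative statement, I would argue via density of the domain. Since $T \in \mathcal{S}_{n}(\mathcal{H})$ is by definition densely defined and $T \cong Z_{V}$, the operator $Z_{V}$ is densely defined; by Theorem~\ref{Martin-Z} applied to the extreme point $\Theta = V$, this is equivalent to the nonexistence of a finite angular derivative of $(V \circ b^{-1})\vec{k}$ at $z = 1$ for every $\vec{k} \in \C^{n}$, which is exactly the claim. (Replacing $V$ by another representative $RVQ$ of the Livsic function replaces $(V\circ b^{-1})\vec{k}$ by $R(V\circ b^{-1})(Q\vec{k})$, so the conclusion is independent of the chosen representative.) The only genuinely substantive ingredient in the whole argument is the identity $\mathscr{H}(V)_{+} = \mathscr{H}(V)$, which rests on the Helson--Lowdenslager form of Szeg\H{o}'s theorem quoted above; everything else is bookkeeping with the intertwining multiplier $Q$ and an appeal to Theorem~\ref{Martin-Z}.
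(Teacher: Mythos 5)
Your argument is correct and follows essentially the same route the paper intends: $T \cong M^{V}$ on $\mathscr{H}(V)$ via Theorem \ref{Mz-Herglotz}, extremality giving $\mathscr{H}(V)_{+} = \mathscr{H}(V)$ so that $Q = \tfrac{1}{2}(I-V)$ is an isometric multiplier onto $\mathscr{K}(V)$ intertwining multiplication by $z$, and then dense definedness of $Z_{V}$ combined with Theorem \ref{Martin-Z} to rule out the angular derivative. Your remarks on the domain correspondence under $\mathcal{M}_{Q}$ and the independence of the chosen representative are harmless additions to the paper's bookkeeping.
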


\end{document}